\numberwithin{equation}{section}
\numberwithin{figure}{section}
  \theoremstyle{plain}
    \newtheorem{theorem}{Theorem}[section]
  \newtheorem{fact}{Fact}[section]
  \newtheorem{proposition}[theorem]{Proposition}
  \newtheorem{lemma}[theorem]{Lemma}
  \newtheorem{corollary}[theorem]{Corollary}
\theoremstyle{definition}
  \newtheorem{definition}[theorem]{Definition}
  \newtheorem{example}[theorem]{Example}
 \theoremstyle{remark}
  \newtheorem{remark}[theorem]{Remark}
\numberwithin{equation}{section}
\newcommand{\roots}{\roots}
\newcommand{\mat}{\mathbb{M}}
\def\({\begin{math}}
\def\){\end{math}}
\def\Petschub{ X_{w_J, \Pett} }
\def\Hess{{\mathcal{H}(X,H)}}
\def\Pet{{\mathcal{H}(X,\Pett)}}
\def\rr{r_i}
\def\ad{\textup{ad} }
\def\Ad{\textup{Ad } }
\def\dim{\textup{dim}}
\def\V{\mathcal{V}}
\def\ZZ{{\mathbb Z}}
\def\RR{{\mathbb R}}
\def\CC{{\mathbb C}}
\def\m{m}
\def\nn{q}
\def\Pett{\mathfrak P}
\def\c{ c}
\def\dimn{d}
\def\roots{\Phi}
\def\PetSchub{X_{w_J, \Pett}}
\def\a{\alpha}
\def\matr{\mathbb{N}}
\begin{document}
\pagestyle{plain}

\title{Affine pavings of regular nilpotent Hessenberg varieties and intersection theory of the Peterson variety}

\author{Erik Insko}
\address{Dept. of Mathematics\\
Florida Gulf Coast University 
\\ Ft Myers, FL 33695}
\email{einsko@fgcu.edu}

\author{Julianna Tymoczko}
\address{Dept. of Mathematics\\
Smith College\\
Northampton, MA 01063 }
\email{jtymoczko@smith.edu}

\subjclass[2000]{Primary: 14F25; Secondary: 14C17 , 14L30, 14L35}

\keywords{Nilpotent Hessenberg variety, Affine paving, Intersection theory, Lie algebra, Schubert variety}

\thanks{EI was partially supported by NSF VIGRE grant DMS-0602242. JT was partially supported by NSF grants DMS-0801554 and DMS-1248171, and by a Sloan fellowship.}

\date{\today}
\maketitle


\begin{abstract}
This paper describes a paving by affines for regular nilpotent Hessenberg varieties in all Lie types, namely a kind of cell decomposition that can be used to compute homology despite its weak closure conditions.   Precup recently proved a stronger result; we include ours because we use different methods. 
We then use this paving to prove that the homology of the Peterson variety injects into the homology of the full flag variety.  
The proof uses intersection theory and expands the class of the Peterson variety in the homology of the flag variety in terms of the basis of Schubert classes.  
We explicitly identify some of the coefficients of Schubert classes in this expansion, which is a problem of independent interest in Schubert calculus.
\end{abstract}

\maketitle

\setcounter{tocdepth}{1}
\tableofcontents


\section{Introduction}

Hessenberg varieties form the nexus between many important areas of mathematics.  
Two parameters characterize a Hessenberg variety: an element $X$ in a Lie algebra $\mathfrak{g}$ and a subspace $H \subset \mathfrak{g}$ whose conditions are given precisely below. 
 De Mari-Shayman defined  Hessenberg varieties in type $A_n$ to describe geometrically the ordered bases that put the $n \times n$ matrix $X$ into {\em Hessenberg form}, 
which is critical for algorithms in numerical analysis \cite{DeMSha88}.  
De Mari-Procesi-Shayman then generalized Hessenberg varieties beyond the original numerical analysis applications to all Lie types \cite{DPS}.  
Hessenberg varieties make important connections between fields like combinatorics \cite{Fulm97,Mbi10}, 
geometric representation theory \cite{Spr76,Fun03}, algebraic geometry and topology \cite{Bri-Car04, InsYon12}, and the quantum cohomology of the flag variety \cite{K,rietsch}, among others.

In this paper, we study the topology of {\em regular nilpotent Hessenberg varieties}, namely when $X$ is a regular nilpotent element of $\mathfrak{g}$.  In type $A_n$ regular nilpotent $X$ are the matrices with a single Jordan block  whose only eigenvalue is zero.  These varieties arose naturally in Peterson's study of the affine Grassmannian \cite{Pet97} and in Kostant's and Rietsch's further work on the quantum cohomology of the flag variety \cite{K,Rie01}.  Through the connection between Peterson varieties and quantum cohomology, Rietsch discovered a set of beautiful new determinantal identities \cite{rietsch}.  

We study cohomology of regular nilpotent Hessenberg varieties in part because of a conjecture of Reiner's.  The homology of regular nilpotent Hesseberg varieties in type $A_n$ can be described as a quotient $\mathbb{Z}[x_1, x_2, \ldots, x_n]/I_H$ that resembles Borel's presentation of the cohomology of the flag variety \cite{Mbi10,MT12}.  The ideal $I_H$ generalizes Tanisaki's ideal, which is an ideal of symmetric functions that defines the cohomology of an important collection of varieties in geometric representation theory called Springer varieties.  Reiner observed that these quotients $\mathbb{Z}[x_1, x_2, \ldots, x_n]/I_H$ are the 
cohomology rings of a family of Schubert varieties studied by Ding and others \cite{Din97}.  He also conjectured that these quotient rings are also the {\em cohomology} rings of the corresponding Hessenberg variety.  A natural follow-up question  asks for the geometric relationship---if any!---between each regular nilpotent Hessenberg variety and the corresponding Schubert variety.  

Others have also tried to identify the cohomology of regular nilpotent Hessenberg varieties.  Brion-Carrell identified the equivariant cohomology ring of each regular nilpotent Hessenberg variety with 
the coordinate ring of a particular affine curve \cite{Bri-Car04}. 
Harada-Tymoczko constructed this ring with combinatorial generators and relations \cite{HT09, HT10}, and together with Bayegan-Harada, gave a classical Schubert calculus presentation of the cohomology ring  \cite{BH10}. Bayegan-Harada identified the cohomology ring for one other family of regular nilpotent Hessenberg varieties \cite{BH12}. 
This leaves open the question of providing an explicit description of the equivariant cohomology rings for all regular nilpotent Hessenberg varieties.

This paper has several main results.  First, we give a paving by affine spaces for regular nilpotent Hessenberg varieties in all Lie types in Theorem \ref{paving}.  Pavings are important for several reasons.  This paving is the first step to identify the equivariant cohomology ring since it proves that regular nilpotent Hessenberg varieties are equivariantly formal, a technical condition that dramatically simplifies equivariant cohomology calculations.  Moreover, this paving ensures the existence of geometric fundamental classes which generate the homology of the regular nilpotent Hessenberg varieties, as proved in Corollary \ref{basis}.   Our proof simplifies and generalizes earlier work of Tymoczko \cite{Tym06b}.  Our work is subsumed by recent work of Precup's, which provides a paving by affines for a larger class of Hessenberg varieties \cite{Pre12}.  However, our methods are different so we chose to include the proof in this manuscript.  Peterson gave a similar paving  of a particular regular nilpotent Hessenberg variety, now called the Peterson variety, but into singular affine varieties instead of affine cells; this is an essential piece of Peterson's description of the quantum cohomology rings of  partial flag varieties $G/P$ \cite{K,rietsch}.

In the next part of our paper, we focus on the Peterson variety.   Theorem \ref{linearlyindependent}  gives a direct geometric proof that the inclusion of the Peterson variety into the flag variety 
induces an injection in homology.  This strengthens and geometrically explains an earlier proof of injectivity by Harada-Tymoczko  \cite{HT09}:  we partially decompose the class induced by each piece of the paving of the Peterson variety in terms of Schubert classes in the flag variety.  Our proof relies on classical intersection theory: we count points in the intersections between particular Schubert varieties in the flag variety and the closures of the pieces of the paving of the Peterson variety.   Harada-Tymoczko's purely combinatorial proof depended on an inspired but unmotivated choice of Schubert classes to generate the cohomology of the Peterson variety; our geometric construction explains their choices.   The Schubert varieties we use for our intersections are dual under the Universal Coefficient Theorem to the cohomology classes in Harada-Tymoczko's work, as we prove in Proposition \ref{UCTduality}.  

This injection is a subtle topological result, requiring much more than the paving from Section \ref{section:paving}.  Indeed, pavings of subspaces---even very nice pavings of subspaces, compatible with the ambient spaces---may not induce homology injections, not even in examples completely analogous to regular nilpotent Hessenberg varieties.  Consider the circle $S^1$ embedded as the equator of the sphere $S^2$.  One natural CW-structure for $S^2$ consists of the upper hemisphere, the lower hemisphere, and the cells in the CW-decomposition of $S^1$.  By construction, this intersects $S^1$ in the natural CW-structure for $S^1$. 
 However, the image of $H_*(S^1)$ in $H_*(S^2)$ under the map induced by the inclusion $S^1 \hookrightarrow S^2$ is just the class of a point, since the sphere $S^2$ has no odd-dimensional homology.  
A family of smooth Hessenberg varieties called regular semisimple Hessenberg varieties provide similar examples.  Regular semisimple Hessenberg varieties have the same Euler number as the flag variety but their Betti numbers are in lower degrees, so the homology of regular semisimple Hessenberg varieties does not inject into the homology of the flag variety.  Note that the cells of the pavings of  regular semisimple Hessenberg varieties are all even-dimensional, unlike in the previous example of spheres.  

This injection is also an important topological result: various strategies to compute cohomology rings, including those due to Goresky-MacPherson \cite{GorMac10} or Harada-Tymoczko \cite{HT10}, rely on a homology injection like we prove.  In forthcoming work, we use this injection together with Goresky-MacPherson's approach to construct a natural  presentation of the $S^1$-equivariant cohomology ring of the Peterson variety as a quotient of a polynomial ring.  This presentation refines a result about the Poincar\'{e} polynomial announced in work of Brion-Carrell and  due to Peterson \cite{Bri-Car04}.  It is much simpler than Harada-Tymoczko's \cite{HT09}. It also highlights the similarities between the Peterson variety and the ambient flag variety.  

Our paper is structured as follows.  In Section \ref{section:paving} we prove that regular nilpotent Hessenberg varieties are paved by affines in all Lie types.
The proof of Theorem \ref{paving} relies on a similar argument to those given in previous paving results of Tymoczko \cite{Tym06,Tym06b}: 
we construct an iterated fiber bundle whose fibers are affine spaces and whose base is inductively proven to be affine.  
The difference from the previous results is a different decomposition of the root spaces, which permits us to generalize to all Lie types.  
Some case-by-case analysis is deferred to tables in the Appendix.  

In Section \ref{3} we prove that the cells described in Theorem \ref{paving} correspond to a basis of homology classes in $H_*(\Hess, \ZZ)$.  
The proof uses a result of Carrell and Goresky, Proposition \ref{CarGor}.
In Section \ref{Peterson} we restrict attention to the Peterson variety, proving that for each Peterson-Schubert homology class there is 
an opposite Schubert variety which intersects it properly in 
Proposition \ref{properintersection}.   

The intersections in Proposition \ref{properintersection} are often not transverse in Lie types other than Type $A_n$.  One might wonder if this failure in transversality is 
due to one of the varieties being singular at the point of intersection.  It follows from Theorem \ref{paving} that the Peterson-Schubert varieties are smooth at the intersection point; the largest cell 
in the paving is an affine neighborhood of the intersection point.  However, it is not obvious
whether the Schubert varieties are singular at the intersection point.  Thus, in Section \ref{section:smooth} we use {\em Kumar's criterion}, 
which describes when a torus-fixed point is smooth in a Schubert variety, to identify an interesting collection of smooth points in certain Schubert varieties in classical Lie types, 
and explain precisely how smoothness fails in exceptional Lie types in Theorem \ref{smooth}.

In Section \ref{classexpansions} we give partial expansions of Peterson-Schubert homology classes in terms of the Schubert basis classes in Theorem \ref{anonzero}.
We then use these expansions to prove that the homology of the Peterson variety injects into the homology of the flag variety in Theorem \ref{linearlyindependent}.
The proof of Theorem \ref{anonzero} uses classical intersection theory and the result of Proposition \ref{properintersection} to partially determine the homology class of each Peterson-Schubert variety.
The proof of Theorem \ref{linearlyindependent} then follows from Theorem \ref{anonzero}.  The partial expansions of the Peterson-Schubert classes indentified in Theorem \ref{anonzero} 
show the push-forward $i_*$ induced by the inclusion map $\iota: \Pet \hookrightarrow G/B$ maps the basis of the homology of the Peterson variety to a linearly independent set in the homology of the flag variety.

\section{Background}

\subsection{Notation} 
Let $G$ be a complex reductive linear algebraic group.  
Fix a pair of opposite Borel subgroups $B$ and $B^{-}$ of $G$ such that $ B \cap B^{-}= T$ is a maximal torus.  
Let $G/B$ denote the flag variety and let $W= N(T)/T$ denote the Weyl group of $G$.  

The subgroup $B$ determines a decomposition of $G$ into double cosets $G = \prod_{w \in W} BwB$ known as the Bruhat decomposition.  
Passing to the quotient $G/B$ one obtains a decomposition of the flag variety $G/B$ into cosets $G/B = \prod_{w \in W} BwB/B$, called {\em Bruhat} or {\em Schubert cells} and often denoted $C_w = BwB/B$.  
The closures $X_w = \overline{BwB/B}$ are the {\em Schubert subvarieties} of the flag variety.  

Let $\mathfrak g, \mathfrak b,$ and $ \mathfrak t$ denote the Lie algebras of $G,B$ and $T$ respectively.   The adjoint action of $G$ on $\mathfrak{g}$ is defined by 
\[Ad(g)(X) = gXg^{-1}\] for any $X \in \mathfrak g$.
Let $\roots$ denote the roots of $\mathfrak g$ and $\roots^+$ the roots corresponding to $\mathfrak b$. 
The set $\roots$ has a partial order defined by the rule that $ \alpha > \beta$ if and only if $\alpha-\beta$ is a sum of positive roots. 
The subalgebra $\mathfrak b$ determines a base $\Delta \subseteq \roots^{+}$ of simple positive roots $\alpha_i \in \Delta$.  
We denote the root space corresponding to $\alpha$ by $\mathfrak g_{\alpha}$, the root subgroup corresponding to $\alpha$ by $U_{\alpha}$, 
and the maximal unipotent subgroups of $B$ and $B^{-}$ by $U$ and $U^{-}$ respectively.   

We may use the subgroup $U_w= \{ u \in U : w^{-1} u w \in U^{-}\}$ to choose a set of coset representatives for the Schubert cell for $w$.     
In fact, we can write $U_w$ as a product of root subgroups.

Define $\roots_w = \{ \alpha \in \roots^{+} : w^{-1} \alpha < 0 \}$.   These are the roots that index $U_w$ in the sense that
\[ U_w = \prod_{\alpha \in \roots_w} U_{\alpha} \]
where the product is taken over any fixed ordering of the roots $\roots_w$ \cite[Section 28.1]{H}.  Moreover, these roots determine the Schubert cell corresponding to $w$.

\begin{lemma}\label{H3}\cite[Theorems 28.3--4]{H}
The Schubert cell $BwB/B$ is homeomorphic to $U_w$.
\end{lemma}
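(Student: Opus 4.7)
The plan is to exhibit a natural bijection from $U_w$ onto the Schubert cell $BwB/B$ and show it is a homeomorphism. The first step is to simplify $BwB$ using the standard decomposition $B = TU$ together with the fact that $w$ normalizes $T$: the identity $TwB = wTB = wB$ immediately yields $BwB = UwB$ and hence $BwB/B = UwB/B$.

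The second step is to factor $U$ according to how $w^{-1}$ acts on positive roots, writing $U = U_w \cdot U'$ where $U' = \prod_{\alpha \in \roots^+ \setminus \roots_w} U_\alpha$. Each such $\alpha$ satisfies $w^{-1}\alpha > 0$, so $w^{-1} U_\alpha w = U_{w^{-1}\alpha} \subseteq U \subseteq B$; consequently $U' w B = w (w^{-1} U' w) B = wB$, and therefore $BwB/B = U_w\, wB/B$. This produces a surjective map $\varphi : U_w \to BwB/B$ sending $u \mapsto uwB$.

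The third step is to verify injectivity of $\varphi$: if $u_1 wB = u_2 wB$, then $u_2^{-1} u_1 \in U \cap wBw^{-1}$, and by uniqueness of the Jordan decomposition any unipotent element of $wBw^{-1}$ already lies in the unipotent radical $wUw^{-1}$. Hence $u_2^{-1} u_1 \in U \cap wUw^{-1} = U'$, and since $U_w \cap U' = \{e\}$ by construction, we conclude $u_1 = u_2$.

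Finally, because $\varphi$ is an injective morphism from the affine variety $U_w$ onto a locally closed subvariety of $G/B$ of the same dimension, standard algebraic geometry promotes this to an isomorphism of varieties and, in particular, to a homeomorphism. The one nontrivial ingredient in the whole argument, and the expected main obstacle if one wished to make everything self-contained, is the product decomposition $U_w \cong \prod_{\alpha \in \roots_w} U_\alpha$ as varieties (independent of the chosen ordering of the roots); I would simply cite this from \cite[Section 28.1]{H} rather than reprove it.
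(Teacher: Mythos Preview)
The paper does not prove this lemma; it is stated with a citation to \cite[Theorems 28.3--4]{H} and no further argument. Your proposal is essentially the standard proof found in Humphreys and is correct. One small remark: in the final step, the appeal to ``standard algebraic geometry'' is a bit informal---a bijective morphism of varieties is not automatically an isomorphism---but here both $U_w$ and the Schubert cell are smooth (the latter being a $B$-orbit), so over $\mathbb{C}$ Zariski's Main Theorem (or the orbit-map argument with trivial stabilizer) does the job.
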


\subsection{Background on Hessenberg Varieties}
In this section we define regular nilpotent Hessenberg varieties and recall the significance of a paving by affines.  

\begin{definition}
A \textbf{Hessenberg space $H$} is a linear subspace 
of $\mathfrak{g}$ such that $ \mathfrak b \subseteq H$ and  $[H,\mathfrak b ] \subseteq H.$
\end{definition}
In other words $H$ is a $\mathfrak b$-submodule of $\mathfrak g $ containing $\mathfrak b$. 

\begin{lemma} \cite[Lemma 1]{DPS}
The datum of a Hessenberg space $H$ is equivalent to the set of roots $M_H \subset \roots^-$ satisfying the following closure condition:
If $\beta \in M_H$ and $\alpha \in \Delta$ is a positive simple root such that $\beta+\alpha \in \roots^-$ then the sum $\beta+\alpha \in M_H$.   

The sets $\{H\}$ and $\{M_H\}$ are related by decomposing each Hessenberg space into root spaces:
\[ H = \mathfrak b \oplus \left ( \bigoplus_{\alpha \in M_H} \mathfrak g _\alpha \right ).\] 
\end{lemma}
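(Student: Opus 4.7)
The plan is to use the Cartan action on $H$ to decompose it into root spaces, rephrase the $\mathfrak{b}$-module condition as a combinatorial condition on roots, then reduce closure under arbitrary positive roots to closure under simple positive roots.

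First, since $\mathfrak{t} \subseteq \mathfrak{b} \subseteq H$, the subspace $H$ is stable under $\mathrm{ad}(\mathfrak{t})$. Because $\mathfrak{g}$ is a direct sum of the pairwise-distinct $\mathfrak{t}$-weight spaces $\mathfrak{t}$ and $\mathfrak{g}_\alpha$ for $\alpha \in \roots$, and each $\mathfrak{g}_\alpha$ is one-dimensional, $H$ must itself be a direct sum of some of these weight spaces. The inclusion $\mathfrak{b} \subseteq H$ forces $\mathfrak{t}$ and every $\mathfrak{g}_\alpha$ for $\alpha \in \roots^+$ to appear as a summand. Setting $M_H := \{ \alpha \in \roots^- : \mathfrak{g}_\alpha \subseteq H \}$, this yields
\[ H = \mathfrak{b} \oplus \left( \bigoplus_{\alpha \in M_H} \mathfrak{g}_\alpha \right), \]
and conversely any subset $M \subseteq \roots^-$ determines a subspace of this form that contains $\mathfrak{b}$ and is $\mathfrak{t}$-stable.

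Next, I would translate the bracket condition $[H, \mathfrak{b}] \subseteq H$ into a condition on $M_H$. Because $H$ is already $\mathfrak{t}$-stable, this reduces to requiring $[H, \mathfrak{g}_\alpha] \subseteq H$ for each $\alpha \in \roots^+$. Using the commutation relations $[\mathfrak{g}_\beta, \mathfrak{g}_\alpha] \subseteq \mathfrak{g}_{\alpha+\beta}$ (with the convention $\mathfrak{g}_\gamma = 0$ if $\gamma \neq 0$ is not a root) together with the fact that $\mathfrak{t}$ and $\mathfrak{g}_{\beta+\alpha}$ for $\beta + \alpha \in \roots^+$ automatically lie in $\mathfrak{b} \subseteq H$, the only substantive requirement is: for every $\beta \in M_H$ and every $\alpha \in \roots^+$ with $\beta + \alpha \in \roots^-$, one has $\beta + \alpha \in M_H$.

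Finally, I would show that this closure under all positive roots is equivalent to closure under only simple positive roots; this reduction is the main obstacle. One direction is immediate. For the converse, given $\beta \in M_H$ and $\alpha \in \roots^+$ with $\beta + \alpha \in \roots^-$, I would invoke the standard chain lemma from root system theory to produce an expression $\alpha = \alpha_{i_1} + \cdots + \alpha_{i_k}$ in simple roots (with repetition) such that every partial sum $\gamma_j := \beta + \alpha_{i_1} + \cdots + \alpha_{i_j}$ lies in $\roots \cup \{0\}$. I would then verify that each $\gamma_j$ lies in $\roots^-$: adding a simple positive root to a positive root or to $0$ can only produce a positive root or a non-root, so once the chain entered $\roots^+ \cup \{0\}$ it could never return to $\roots^-$, contradicting $\gamma_k = \beta + \alpha \in \roots^-$. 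Iterating the simple-root closure hypothesis along the chain $\gamma_0, \gamma_1, \ldots, \gamma_k$ then yields $\beta + \alpha \in M_H$, completing the reduction.
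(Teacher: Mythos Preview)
The paper does not supply its own proof of this lemma; it is quoted from \cite{DPS} without argument. Your proof is correct and is essentially the standard one.

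One remark on the final reduction: the ``chain lemma'' you invoke---that one can pass from $\beta$ to $\beta+\alpha$ by adding simple roots one at a time while staying in $\roots$---is true when $\alpha$ itself is a root, but is less commonly stated in this form than the version for a single positive root. A cleaner alternative, which bypasses the combinatorial chain entirely, is to note that the simple root spaces $\mathfrak g_{\alpha_i}$ generate $\mathfrak n^+=\bigoplus_{\alpha\in\roots^+}\mathfrak g_\alpha$ as a Lie algebra; hence closure of $H$ under each $\mathrm{ad}(\mathfrak g_{\alpha_i})$ forces, by the Jacobi identity, closure under $\mathrm{ad}(\mathfrak n^+)$, and therefore under $\mathrm{ad}(\mathfrak b)=\mathrm{ad}(\mathfrak t\oplus\mathfrak n^+)$. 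Either route works; yours is more combinatorial, this one more algebraic, and both yield the same equivalence.
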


A Hessenberg space $H \subseteq \mathfrak{g}$ together with an element of the Lie algebra $\mathfrak{g}$ determine a Hessenberg variety.

\begin{definition} Given an element $X \in \mathfrak{g}$ and a Hessenberg space $H$, the {\em Hessenberg variety} $\Hess$ is defined to be
\[ \Hess = \left \{ gB \in G/B \mid Ad(g^{-1})(X) \in H \right \}.\]
\end{definition} 

Most geometric and topological results about Hessenberg varieties are independent of the choice of $X$ in its adjoint orbit in $\mathfrak{g}$, as described in the following lemma.  (The proof merely notes that conjugating $X$ by an element $g \in G$ is equivalent to translating the Hessenberg variety $\Hess$ by $g$.)

\begin{lemma}\cite[Lemma 5.1]{HT10} 
The Hessenberg variety $\Hess$ is homeomorphic to the Hessenberg variety $\mathcal{H}(Ad(g)X, H)$.
\end{lemma}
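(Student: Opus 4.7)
The plan is to exhibit an explicit homeomorphism induced by left multiplication by $g$ on the flag variety. Define the map $L_g : G/B \to G/B$ by $L_g(hB) = ghB$. Since left multiplication by $g$ on $G$ is an algebraic automorphism that commutes with the right $B$-action, $L_g$ descends to a biregular automorphism of $G/B$ whose inverse is $L_{g^{-1}}$; in particular $L_g$ is a homeomorphism of $G/B$ onto itself.

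The key step is to verify that $L_g$ carries $\Hess$ bijectively onto $\mathcal{H}(Ad(g)X, H)$. By definition, $hB \in \Hess$ if and only if $Ad(h^{-1})(X) \in H$. A direct computation gives
\[ Ad((gh)^{-1})(Ad(g)X) \;=\; Ad(h^{-1}) \, Ad(g^{-1}) \, Ad(g)(X) \;=\; Ad(h^{-1})(X), \]
so the condition $hB \in \Hess$ is equivalent to $L_g(hB) = ghB \in \mathcal{H}(Ad(g)X, H)$. Consequently the restriction $L_g|_{\Hess}$ is the desired homeomorphism, with inverse provided by $L_{g^{-1}}$ applied to $\mathcal{H}(Ad(g)X, H)$.

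There is no real obstacle in this argument; the only point requiring care is keeping track of inverses. The definition of $\Hess$ involves $Ad(g^{-1})$ acting on $X$ rather than $Ad(g)$, which is precisely what causes the cancellation in the computation above when $X$ is replaced by $Ad(g)X$ and $h$ by $gh$. This cancellation is exactly what makes translation by $g$ a homeomorphism between the two Hessenberg varieties.
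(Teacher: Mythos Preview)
Your proof is correct and matches the paper's approach exactly: the paper's parenthetical remark that ``conjugating $X$ by an element $g \in G$ is equivalent to translating the Hessenberg variety $\Hess$ by $g$'' is precisely the left-translation map $L_g$ you wrote down, and your verification that $Ad((gh)^{-1})(Ad(g)X) = Ad(h^{-1})(X)$ is the only computation needed.
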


A {\em regular nilpotent Hessenberg variety} is a Hessenberg variety $\Hess$ for which the parameter $X$ is a regular (or principal) nilpotent element in $\mathfrak{g}$.  

Jordan canonical form gives a nice representative for each nilpotent orbit in type $A_n$.  Nilpotent orbits are classified for other Lie types, but the classification depends on the type.  However, in all Lie types, the regular nilpotent orbit contains a particularly nice representative which is consistent with Jordan form in type $A_n$.

\begin{fact} \cite[Theorem 4.1.6]{CM93} \label{fact: ColMcG}
For each $i$, choose a simple root vector $E_{\alpha_i} \in \mathfrak{g}_{\alpha_i}$.
The element $X_0 = \sum_{\alpha_i \in \Delta} E_{\alpha_i}$ 
 is in the regular nilpotent adjoint orbit of $\mathfrak{g}$ regardless of the Lie type of $\mathfrak{g}$. \end{fact}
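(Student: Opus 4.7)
The plan is to verify that $X_0$ lies in the nilpotent cone and that its centralizer $Z_{\mathfrak{g}}(X_0) = \ker(\mathrm{ad}(X_0))$ has dimension equal to $\mathrm{rank}(\mathfrak{g})$. Together these characterize membership in the regular nilpotent orbit: regular elements of $\mathfrak{g}$ are by definition those with centralizer of minimal possible dimension $\mathrm{rank}(\mathfrak{g})$, and the regular nilpotent orbit is the unique adjoint orbit inside the nilpotent cone on which this minimum is attained.

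Nilpotence of $X_0$ is immediate, since $X_0 \in \mathfrak{n} := \bigoplus_{\alpha \in \Phi^+} \mathfrak{g}_\alpha$ and $\mathrm{ad}(X_0)$ strictly raises $\mathfrak{h}$-weights, hence is a nilpotent operator on $\mathfrak{g}$. To compute the dimension of the centralizer, I would build a principal $\mathfrak{sl}_2$-triple through $X_0$. Define $H_0 \in \mathfrak{h}$ to be the unique element satisfying $\alpha_i(H_0) = 2$ for every simple root $\alpha_i \in \Delta$; a direct check gives $[H_0, X_0] = 2 X_0$, and the Jacobson--Morozov theorem then produces $Y_0 \in \mathfrak{g}$ so that $(X_0, H_0, Y_0)$ form an $\mathfrak{sl}_2$-triple. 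Decomposing $\mathfrak{g} = \bigoplus_{j=1}^k V_j$ as a module over this copy of $\mathfrak{sl}_2$, on each irreducible summand $V_j$ the kernel of $\mathrm{ad}(X_0)$ is the one-dimensional highest-weight line, so $\dim Z_{\mathfrak{g}}(X_0) = k$.

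The main obstacle is identifying $k$ with $r := \mathrm{rank}(\mathfrak{g})$. This is a classical theorem of Kostant: the highest weights of the summands $V_j$ are precisely $2 m_1, \ldots, 2 m_r$, where $m_1, \ldots, m_r$ are the exponents of the Weyl group. A type-free proof relates the $H_0$-eigenvalues on $\mathfrak{g}$ to the degrees of a system of homogeneous generators of the $W$-invariant polynomials on $\mathfrak{h}$; alternatively one can verify $k = r$ by a case-by-case inspection using the classification of simple Lie algebras. Once $k = r$ is established, $X_0$ is regular nilpotent, and because the regular nilpotent elements form a single adjoint orbit, $X_0$ lies in this orbit for every Lie type. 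In practice, for the purposes of this paper, the cited result \cite[Theorem 4.1.6]{CM93} of Collingwood--McGovern can simply be invoked in place of reconstructing Kostant's argument.
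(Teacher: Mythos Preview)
Your proposal is correct, but it goes well beyond what the paper actually does. The paper treats this statement as a \emph{Fact}, citing \cite[Theorem 4.1.6]{CM93} and offering no proof at all; it is used purely as a black box to fix a convenient representative of the regular nilpotent orbit. Your sketch via a principal $\mathfrak{sl}_2$-triple and Kostant's identification of the number of irreducible summands with $\mathrm{rank}(\mathfrak{g})$ is the standard argument and is accurate, and you yourself observe at the end that one may simply invoke Collingwood--McGovern. So there is no discrepancy to flag: the paper's ``proof'' is the citation, and your final sentence already matches that.
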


Henceforth in this paper, when we refer to a regular nilpotent Hessenberg variety, we will mean $\mathcal{H}(X_0,H)$.

Given a Schubert cell $C_w$ and Hessenberg variety $\Hess$, we define a \emph{Hessenberg Schubert cell} to be the intersection $C_{w,\mathcal H} = C_w \cap \Hess$.  The \emph{Hessenberg Schubert variety} 
$X_{w, \mathcal H}$ is the closure of the Hessenberg Schubert cell in the Hessenberg variety, namely $X_{w, \mathcal H } = \overline{C_{w,\mathcal H}}$.

Later in the paper, we specialize to the {\em (Dale) Peterson variety}, which is a particular regular nilpotent Hessenberg variety \cite{Pet97, K, rietsch}.  The Hessenberg space for the Peterson variety is 
\[ \Pett = \mathfrak b \oplus \left ( \bigoplus_{\alpha_i \in \Delta^{-} } \mathfrak g_{\alpha_i} \right ) .\]  
When we consider the Peterson variety, we will denote the Hessenberg space by $\Pett$ and the corresponding set of roots by $M_{\Pett}$.   (Note that $M_{\Pett} = \Delta^{-}$.)
We will also refer to Peterson Schubert cells $C_{w, \Pett}$ and Peterson Schubert varieties $X_{w,\Pett}$.



\section{Affine Pavings of Hessenbergs} \label{section:paving}

In this section, we prove one of our main results: regular nilpotent Hessenberg varieties are {\em paved by affine spaces} in every Lie type.  This means that regular nilpotent Hessenberg varieties can be partitioned so that each part is homeomorphic to $\CC^d$, subject also to certain closure conditions.  Colloquially, pavings are like $CW$-decompositions but with less restrictive closure relations imposed on the cells.  Most importantly, the closure conditions of a paving by affines are sufficient to guarantee that the cells form geometric generators for the homology $H_*(X,\ZZ)$.   

Our proof uses induction, relying on certain subgroups of the unipotent group whose product gives the whole unipotent group.  We will construct a sequence of partial Schubert cells that approximates more and more finely a Hessenberg Schubert cell, and that terminates in the Hessenberg Schubert cell itself.  We will then prove that each partial Schubert cell in the sequence is an affine bundle over the previous cell in the sequence.

We begin with the definitions crucial to our proof.  We then give a minor technical lemma used in our calculations.  Finally, we provide the construction and proof of the paving by affines for regular nilpotent Hessenberg varieties of arbitrary Lie type.

\subsection{Essential definitions}

We now give the precise definition of pavings and of the unipotent subgroups we use later in this section.

\begin{definition}
A \emph{paving} of an algebraic variety $\V$ is an ordered partition into disjoint cells $\V_0, \V_1, \V_2, \ldots, $ such that every finite ordered union $\cup_{i =0}^j \V_i$ is Zariski-closed in $\V$.  A \emph{paving by affines} is a paving where each cell $\V_i$ is homeomorphic to affine space $\V_i \cong \mathbb C^{d_i}$.  
\end{definition}

If $C$ is a cell in a paving by affines of a variety $X$ then the fundamental class of the closure $\overline{C}$ is a nonzero homology class in $H_*(X,\ZZ)$.  The collection of these fundamental classes form a $\ZZ$-module basis of $H_*(X, \ZZ)$  (see Proposition \ref{CarGor}). 

We now describe the subgroups of the unipotent group that we use.  Every positive root $\beta$ can be written as a positive integer linear combination $\beta = \sum c_i \alpha_i$ of the simple roots.  We define the \emph{height} of the root $\beta$ to be the sum of the coefficients  in this linear combination
 and denote $ht(\beta) = \sum c_i$.  

\begin{definition} The {\em roots of height $i$} are the elements of the set 
\[\roots_i = \{\alpha \in \roots^+: ht(\alpha) = i\}.\]  
The subgroup $U_i$ is defined to be $\prod_{\alpha \in \roots_i}U_\alpha$ where the product is taken over any fixed ordering of $\roots_i$.
\end{definition}

The next lemma gives a key property of the unipotent group.

\begin{lemma}\label{H4} \cite[Proposition 28.1]{H}
The unipotent group $U$ factors as the product   
\[ U = U_{\beta_1}U_{\beta_2}\cdots U_{\beta_k} \] for any fixed ordering of the positive roots $\beta_1, \beta_2, \ldots, \beta_k$. 
\end{lemma}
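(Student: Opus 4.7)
The plan is to use the Chevalley commutator formula together with a central filtration of $U$ by root height, reducing an arbitrary ordering to one canonical ordering that can be analyzed by induction.

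First I would record the basic set-up. The multiplication map
\[ \mu : U_{\beta_1} \times U_{\beta_2} \times \cdots \times U_{\beta_k} \to U \]
is a morphism of affine varieties, and both source and target have dimension $k = |\roots^+|$, since each $U_{\beta_i}$ is isomorphic as an algebraic group to the additive group $\mathbb{A}^1$ and $\dim U = \dim \mathfrak n = |\roots^+|$. The differential of $\mu$ at the identity is the natural direct sum decomposition $\bigoplus_i \mathfrak g_{\beta_i} \to \mathfrak n$, hence an isomorphism; so $\mu$ is smooth at the identity and its image contains a Zariski-dense open neighborhood of $e$ in $U$.

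Second, I would prove the factorization for one convenient ordering, say listing the positive roots by non-increasing height (ties broken arbitrarily). Define $U^{(n)}$ to be the subgroup generated by $\{U_\alpha : ht(\alpha) \geq n\}$. The Chevalley commutator formula
\[ [U_\alpha,\, U_\beta] \subseteq \prod_{\substack{i, j \geq 1 \\ i\alpha + j\beta \in \roots^+}} U_{i\alpha + j\beta}, \]
combined with the bound $ht(i\alpha + j\beta) \geq ht(\alpha) + ht(\beta)$, forces $[U, U^{(n)}] \subseteq U^{(n+1)}$. Thus $U = U^{(1)} \supseteq U^{(2)} \supseteq \cdots$ is a central filtration whose successive quotients $U^{(n)}/U^{(n+1)} \cong \bigoplus_{ht(\alpha) = n} U_\alpha$ are abelian and naturally split as direct sums of the root subgroups of a given height. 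Downward induction on $n$ (starting at the maximum height and working down to $1$) yields a \emph{unique} factorization of every element of $U$ as an ordered product in this height-respecting order.

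Third, I would pass from this canonical ordering to an arbitrary target ordering $\beta_1, \beta_2, \ldots, \beta_k$ by an iterated local-swap argument. For adjacent factors $u_\alpha u_\beta$ appearing in a word, the commutator formula yields $u_\alpha u_\beta = u_\beta u_\alpha \cdot c$, where $c$ lies in a product of root subgroups $U_\gamma$ for $\gamma = i\alpha + j\beta$ with $i, j \geq 1$, so $ht(\gamma) > \max(ht(\alpha), ht(\beta))$. Because the correction terms are strictly higher in height, they can be reabsorbed into the already-established canonical factorization at positions past both $\alpha$ and $\beta$, without disturbing earlier placements. The main obstacle will be the bookkeeping in this rewriting: one must ensure the procedure terminates for \emph{any} fixed target ordering. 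The cleanest organization I know is to induct on a monovariant such as the number of inversions of the current word relative to the target order, with a secondary penalty counting the total height of correction factors still to be inserted. The height-raising nature of the Chevalley corrections makes this quantity strictly decrease under each local swap, which gives termination; combined with the uniqueness established in the second step, this completes the proof.
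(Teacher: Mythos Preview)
The paper does not actually prove this lemma; it is quoted from Humphreys' \emph{Linear Algebraic Groups}, Proposition~28.1, and used as a black box throughout. So there is no argument in the paper to compare against, only the question of whether your proof stands on its own.

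Steps 1 and 2 are fine and standard. The gap is in step 3. Your proposed monovariant---inversions relative to the target order plus a secondary height penalty on pending corrections---is not precisely defined, and it is not clear it strictly decreases. When you swap $u_\alpha u_\beta$ to $u_\beta u_\alpha \cdot c$, the correction $c$ is a product of elements $u_\gamma$ that must eventually be merged with the \emph{existing} $u_\gamma$ factors already sitting somewhere in the word, possibly to the left of the current position in the target order; those merges may spawn further corrections, and you have not shown the cascade terminates. The cleaner fix is to fold the arbitrary ordering into the height induction of step 2 rather than treat it as a separate rewriting phase: prove by downward induction on $n$ that, for the \emph{given} ordering $\beta_1,\ldots,\beta_k$, the subproduct $\prod_{i:\,ht(\beta_i)\ge n} U_{\beta_i}$ (factors in their induced order) equals $U^{(n)}$. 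The inductive step needs only that each $U_\alpha$ with $ht(\alpha)=n$ normalizes $U^{(n+1)}$, which follows immediately from the commutator formula, so the height-$n$ factors slide past the height-$(\ge n{+}1)$ block with corrections absorbed by the inductive hypothesis. Taking $n=1$ then gives the lemma with no termination bookkeeping.
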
 

If $N$ is the maximal height of a positive root then $U_1 U_2 \cdots U_N = U$ by Lemma \ref{H4}.

\subsection{The adjoint action of $U_i$} 
This section establishes a key computational result using a standard formula for the adjoint action.  We begin with some notation.

For each root ${\gamma}$ we choose a basis for the root space $\mathfrak{g}_{\gamma}$ consisting of a root vector $E_{\gamma} \in \mathfrak{g}_{\gamma}$.  In the appendix we will make specific claims about the structure constants of the group $G$ for which we will need to choose {\em particular} $E_{\gamma} \in \mathfrak{g}_{\gamma}$.  The work in this section is independent of the choice of $E_{\gamma}$ as long as it is fixed at the outset.

For each root subgroup $U_{\alpha} \subseteq U$ we fix the group homomorphism $u_{\alpha}: \mathbb{C} \rightarrow U_{\alpha}$ given by $u_{\alpha}(z) = \exp(zE_{\alpha})$ where $\exp$ is the exponential map \cite[Section 2 of Introduction, 
Proposition~1.84]{Kna02}

This brings us to our lemma, which begins the technical analysis of the adjoint action of root subgroups $U_{\alpha}$ on root vectors in $\gamma$.

\begin{lemma}\label{H1}  
Let $U_{\alpha} \subseteq U$ be a root subgroup and fix a group homomorphism $u_{\alpha}: \mathbb{C} \rightarrow U_{\alpha}$.  Choose a root vector $E_{\gamma} \in \mathfrak{g}$ for each root $\gamma \in \Phi$.  Fix a root $\beta \in \Phi$.  Then
\[ \Ad(u_\alpha(x))E_\beta = \sum \frac{m_{ j\alpha,\beta}}{j!} x^j E_{\beta+ j\alpha} \]
where the sum is over all $j \geq 0$ such that $\beta+ j \alpha \in \roots$, the structure constant $m_{j\alpha, \beta}$ is a nonnegative integer determined by the choice of Chevalley basis, and $m_{0, \beta}=1$. 
\end{lemma}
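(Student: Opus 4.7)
\textbf{Proof proposal for Lemma \ref{H1}.}

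The plan is to apply the standard exponential formula that converts the adjoint action of an exponentiated element into an exponentiated $\ad$-action, and then to analyze the resulting series using the grading of $\mathfrak{g}$ by the root lattice. Specifically, since $u_\alpha(x) = \exp(xE_\alpha)$, I would invoke the identity $\Ad(\exp(Y)) = \exp(\ad Y)$ (valid because $\ad E_\alpha$ is nilpotent, so the series terminates) to write
\[
\Ad(u_\alpha(x)) E_\beta = \exp(x\,\ad E_\alpha) E_\beta = \sum_{j \geq 0} \frac{x^j}{j!} (\ad E_\alpha)^j E_\beta.
\]
This is the source of the $\frac{1}{j!}$ and the powers of $x$ in the desired formula.

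Next I would analyze $(\ad E_\alpha)^j E_\beta$ step by step. Because $[\mathfrak{g}_\alpha, \mathfrak{g}_\gamma] \subseteq \mathfrak{g}_{\alpha+\gamma}$ (and equals $0$ when $\alpha + \gamma \notin \roots \cup \{0\}$), a simple induction on $j$ shows that $(\ad E_\alpha)^j E_\beta$ is a scalar multiple of $E_{\beta+j\alpha}$ when $\beta + j\alpha \in \roots$, and is $0$ otherwise. Denote that scalar by $m_{j\alpha,\beta}$, so that
\[
(\ad E_\alpha)^j E_\beta = m_{j\alpha,\beta}\, E_{\beta+j\alpha}
\]
whenever $\beta+j\alpha$ is a root. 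The case $j=0$ immediately gives $m_{0,\beta}=1$, and telescoping the Chevalley bracket relations $[E_\alpha, E_{\beta+(i-1)\alpha}] = N_{\alpha,\beta+(i-1)\alpha}E_{\beta+i\alpha}$ expresses $m_{j\alpha,\beta}$ as a product of Chevalley structure constants, hence as an integer once a Chevalley basis is fixed.

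The remaining issue is the claim that $m_{j\alpha,\beta} \geq 0$. Here I would invoke the standard Chevalley-basis theory of the $\alpha$-string through $\beta$: if $\beta - p\alpha, \dots, \beta + q\alpha$ is the $\alpha$-string, then the structure constants satisfy $|N_{\alpha, \beta + i\alpha}| = p + i + 1$ for $0 \leq i \leq q-1$, which already gives integrality, and the signs in a Chevalley basis can be prescribed (for example by fixing an admissible sign system as in Carter's or Steinberg's construction) so that the cumulative product $N_{\alpha, \beta} N_{\alpha, \beta+\alpha} \cdots N_{\alpha, \beta+(j-1)\alpha}$ appearing in $m_{j\alpha,\beta}$ is nonnegative for a positive root $\alpha$. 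I would cite the appropriate reference (e.g.\ Humphreys or Carter) for this sign-choice convention and fix the basis $\{E_\gamma\}$ accordingly.

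The main obstacle is this last point: establishing nonnegativity of $m_{j\alpha,\beta}$ is not automatic from the abstract Chevalley commutator formula and requires either an explicit sign convention on the basis or a direct combinatorial argument using $\mathfrak{sl}_2$-theory on the $\alpha$-string. Once that convention is in place, the rest of the proof is a routine unwinding of the exponential expansion.
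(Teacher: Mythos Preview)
Your approach is essentially the same as the paper's: both invoke the identity $\Ad(\exp Y)=\exp(\ad Y)$ and expand $\exp(x\,\ad E_\alpha)E_\beta$ term by term, reading off the coefficient of $E_{\beta+j\alpha}$ as $m_{j\alpha,\beta}/j!$. The paper's proof is in fact briefer than yours and does not explicitly justify the nonnegativity of $m_{j\alpha,\beta}$; your discussion of the $\alpha$-string and Chevalley sign conventions goes beyond what the paper supplies for that point.
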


\begin{proof}
Recall that  $\Ad(\exp X) = \exp(\ad X) = \sum_{n=0}^{\infty} \frac{(\ad X)^n}{n!}$ as stated in \cite[Proposition 1.91]{Kna02}.  
This means that $\Ad (u_\alpha(x)) = \Ad(\exp (x\cdot E_\alpha)) = \exp ( \ad (x \cdot E_\alpha)) = \sum_{n=0}^{\infty} \frac{(\ad (x \cdot E_\alpha))^n}{n!}.$
It follows that \begin{align*} \Ad(u_{\alpha}(x))(E_\beta) = \exp(\ad(x \cdot E_\alpha))(E_\beta) & = 1+ [x \cdot E_\alpha, E_\beta] + \frac{1}{2}[x \cdot E_{\alpha},[x \cdot E_{\alpha},E_\beta ]] + \cdots \\ 
      &= 1+ \m_{\alpha, \beta} x  E_{\alpha+\beta} + \frac{1}{2} \m_{2\alpha,\beta} x^2 E_{2\alpha+\beta} + \cdots .\end{align*}
Hence $m_{0,\beta} = 1$ and $\Ad(u_\alpha(x))E_{\beta} = \sum \frac{m_{j \alpha,\beta}}{j!} x^j E_{\beta+ j \alpha} $.     \end{proof}

As the previous lemma demonstrates, the operator $\Ad(u_{\alpha}(x_\alpha))$ is not generally linear.  However, by restricting the adjoint action to certain parts of $U$ 
we will be able to exploit an ``almost-linear" phenomenon.

 \begin{lemma}\label{lemma: adjoint calculation}
Fix $N = \sum_{\beta > 0 } n_\beta E_\beta $ to be a sum of positive root vectors and let 
\[u=\prod_{\alpha \in \Phi_i} u_{\alpha}(x_{\alpha}) \in U_i.\]  Then  
\[\Ad(u)(N) - N = \sum_{ht(\gamma)>i} c_\gamma E_\gamma \] 
and in particular for each root $\gamma$ with $ht(\gamma)= i+1$ we have
 \[ c_\gamma = \sum_{\alpha {\textup{ s.t. }}ht(\alpha) = i \textup{ and } \gamma - \alpha \in \Delta } \m_{\alpha,\gamma-\alpha} n_{\gamma-\alpha} x_\alpha. \]
\end{lemma}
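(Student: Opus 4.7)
The plan is to expand $\Ad(u)(N)$ directly, using that $\Ad$ is a group homomorphism together with Lemma \ref{H1}. Writing $\Ad(u) = \prod_{\alpha \in \Phi_i} \Ad(u_\alpha(x_\alpha))$ with respect to the same fixed ordering used for $u$, each factor sends any root vector $E_\mu$ to
\[ E_\mu \;+\; \sum_{j \geq 1} \frac{m_{j\alpha, \mu}}{j!}\, x_\alpha^{\,j}\, E_{\mu + j\alpha} \]
by Lemma \ref{H1}, so every nontrivial term raises the height by $j \cdot ht(\alpha) = ji \geq i$.

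Composing these $|\Phi_i|$ factors and applying the result to a summand $n_\beta E_\beta$ of $N$, a generic output term is a scalar multiple of $E_{\beta + \sum_s j_s \alpha_s}$ with $\alpha_s \in \Phi_i$ and $j_s \geq 0$, and has height $ht(\beta) + i \sum_s j_s$. Either every $j_s = 0$, in which case the contribution is $n_\beta E_\beta$ itself and cancels in $\Ad(u)(N) - N$, or $\sum_s j_s \geq 1$, in which case the height is at least $ht(\beta) + i \geq 1 + i > i$. This proves the first claim.

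For the second claim, I need to isolate contributions to $E_\gamma$ for a fixed $\gamma$ with $ht(\gamma) = i+1$. A nontrivial contribution requires $ht(\beta) + i\sum_s j_s = i+1$ with $\sum_s j_s \geq 1$; since $ht(\beta) \geq 1$ and $i\sum_s j_s \leq i$, the only possibility is $\sum_s j_s = 1$ and $ht(\beta) = 1$. Thus $\beta$ is simple and $\beta = \gamma - \alpha$ for some (necessarily unique) $\alpha \in \Phi_i$; exactly one $j_s$ equals $1$ and all others are $0$, so every other factor in the product $\prod \Ad(u_{\alpha'}(x_{\alpha'}))$ contributes via its trivial $j=0$ term. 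Reading off the $j=1$ coefficient from Lemma \ref{H1} gives the contribution $n_{\gamma - \alpha}\, m_{\alpha, \gamma - \alpha}\, x_\alpha$, and summing over all valid $\alpha \in \Phi_i$ with $\gamma - \alpha \in \Delta$ yields the stated formula for $c_\gamma$.

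The proof is essentially bookkeeping; the main thing to verify is that no cross-term from two or more nontrivial steps in the composition contributes at height $i+1$. This is immediate from the height-counting above: any term arising from $\sum_s j_s \geq 2$ has height at least $1 + 2i > i+1$ (since $i \geq 1$ because $\Phi_i$ contains a positive root), so the height-$(i+1)$ part of $\Ad(u)(N) - N$ is indeed the clean sum over single jumps described in the lemma.
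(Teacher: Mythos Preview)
Your proof is correct and follows essentially the same approach as the paper: both arguments apply Lemma~\ref{H1} to each factor $\Ad(u_\alpha(x_\alpha))$, use that nontrivial terms raise height by a multiple of $i$, and then observe that a contribution at height exactly $i+1$ forces $\beta$ to be simple and a single factor to act nontrivially. Your height-counting is slightly more explicit about ruling out cross-terms from two or more nontrivial steps, but the underlying idea is identical.
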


\begin{proof}
Lemma \ref{H1} says that $\Ad(u_{\alpha}(x_{\alpha}))$ acts ``like" an unipotent linear operator in the following sense: if $\Ad(u_{\alpha}(x_{\alpha}))(E_{\beta}) = \sum_{\gamma \in \roots^+} c_{\gamma} E_{\gamma}$ then Lemma \ref{H1} specializes in certain cases of $c_{\gamma}$ as
\begin{equation}\label{equation: root subgroup is unipotent}
 c_{\gamma} = \left\{ \begin{array}{ll} 0 & \textup{ if } \gamma < \beta \\
	1 & \textup{ if } \gamma = \beta \\ 
	 m_{ \alpha,\beta} x_{\alpha} & \textup{ if } \gamma = \beta + \alpha. \end{array} \right.
	 \end{equation}
Now take $u = \prod_{\alpha \in \roots_i} u_{\alpha}(x_{\alpha})$ as above and suppose $Ad(u)(E_{\beta}) = \sum_{\gamma \in \roots^+} c_{\gamma} E_{\gamma}$.  Using Equation \eqref{equation: root subgroup is unipotent} repeatedly, we conclude
\begin{equation}\label{equation: complicated unipotent equation}
 c_{\gamma} = \left\{ \begin{array}{ll} 0 & \textup{ if } \gamma < \beta \textup{ or } ht(\beta) <ht(\gamma) < ht(\beta)+i \\
	1 & \textup{ if } \gamma = \beta \\ 
	 m_{\alpha, \beta} x_{\alpha} & \textup{ if } \gamma = \beta + \alpha \textup{ for some } \alpha \in \roots_i. \end{array} \right.
	 \end{equation}
Recall that $\Ad(u)$ acts linearly on $\mathfrak{g}$ in the sense that 
\[\Ad(u)\left( \sum 
n_\beta E_\beta \right) = \sum 
n_\beta \left( \Ad(u)(E_\beta) \right).\]  
Write $\Ad(u)(N) - N = \sum_{\gamma} c_\gamma E_\gamma$.  From above, it follows that $c_{\gamma} = 0$ unless $ht(\gamma)>i$.  The only way for both $ht(\gamma) = i+1$ and $\gamma = \beta+ \alpha$ for some $\alpha \in \roots_i$ and $\beta \in \roots^+$ is for $\beta$ to be a simple root.  In that case, by linearity Equation \eqref{equation: complicated unipotent equation} says that $c_{\gamma}$ is as desired.
\end{proof}

The next corollary is an immediate consequence of the previous lemma.

\begin{corollary}\label{corollary: adjoint on regular nilpotent}
Let $X = \sum_{\alpha_j \in \Delta} E_{\alpha_j}$ be a sum of simple root vectors in $\mathfrak{g}$ and let $u \in U$ be a unipotent element.  Then 
\[\Ad(u^{-1})(X) - X = \sum_{\gamma \in \roots^+: ht(\gamma) > 1} c_{\gamma} E_{\gamma}.\]
\end{corollary}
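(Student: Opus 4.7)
The plan is to factor $u^{-1}$ according to the height stratification of the positive roots and iterate Lemma \ref{lemma: adjoint calculation}. Using Lemma \ref{H4} with any ordering of $\roots^+$ that is weakly increasing in height yields a factorization $u^{-1} = v_1 v_2 \cdots v_N$ with each $v_i \in U_i$, where $N$ is the maximal height of a positive root. Since $\Ad$ is a group homomorphism,
\[ \Ad(u^{-1}) = \Ad(v_1) \circ \Ad(v_2) \circ \cdots \circ \Ad(v_N), \]
so I would compute $\Ad(u^{-1})(X)$ one factor at a time, starting from the right.

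Next I would establish by downward induction on $k$ that
\[ Y_k := \Ad(v_k) \circ \cdots \circ \Ad(v_N)(X) = X + \sum_{ht(\gamma) > 1} c_{\gamma,k}\, E_\gamma \]
for suitable scalars $c_{\gamma,k}$. The base case $k = N$ is immediate: Lemma \ref{lemma: adjoint calculation}, applied to $X$ (a sum of height-$1$ root vectors) and $v_N \in U_N$, says $\Ad(v_N)(X) - X$ is supported in heights strictly greater than $N$, but no such positive roots exist, so $Y_N = X$. For the inductive step, I would write $Y_{k+1}$ as a sum of positive root vectors and apply Lemma \ref{lemma: adjoint calculation} with $u = v_k \in U_k$: the difference $Y_k - Y_{k+1}$ is then supported in heights strictly greater than $k \geq 1$. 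Therefore the height-$1$ component of $Y_k$ agrees with that of $Y_{k+1}$, which by inductive hypothesis equals $X$, and the full difference $Y_k - X = (Y_k - Y_{k+1}) + (Y_{k+1} - X)$ is a sum of height-$>1$ root vectors. Taking $k = 1$ yields the statement of the corollary.

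The main obstacle, such as it is, is conceptual rather than computational: one must recognize that Lemma \ref{H4} can be applied compatibly with the height filtration, so that at every stage Lemma \ref{lemma: adjoint calculation} acts through a pure-stratum element $v_k \in U_k$. Once that setup is in place, the crucial feature of Lemma \ref{lemma: adjoint calculation} --- that $\Ad(v_k)$ introduces only root vectors of height strictly greater than $k$ --- guarantees that the height-$1$ part of $X$ is never disturbed, and the induction cascades to completion with essentially no further work.
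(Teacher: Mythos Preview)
Your proof is correct and follows essentially the same strategy as the paper: factor $u$ (or $u^{-1}$) via Lemma~\ref{H4} and iterate Lemma~\ref{lemma: adjoint calculation} to see that each step only alters components of height strictly greater than $1$. The one difference is that you insist on a height-compatible ordering so as to group factors into the strata $U_k$, whereas the paper uses an \emph{arbitrary} ordering and peels off one root subgroup $u_{\beta_j}(x_{\beta_j})^{-1}\in U_{ht(\beta_j)}$ at a time; since Lemma~\ref{lemma: adjoint calculation} applies to any element of any $U_i$ with $i\geq 1$, the height ordering you regard as the ``main obstacle'' is in fact unnecessary.
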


\begin{proof}
Fix an arbitrary ordering of the positive roots and write $u = \prod_j u_{\beta_j}(x_{\beta_j})$ as per Lemma~\ref{H4}.   Let $N_0 = X$ and for each $i = 1, \ldots, |\roots^+|$, let 
\[N_i = \Ad\left(\prod_{j=1}^i u_{\beta_j}(x_{\beta_j})\right)^{-1}(X),\]
so $N_{i+1} =   \Ad\left(u_{\beta_{i+1}}(x_{\beta_{i+1}})^{-1}\right)(N_i)$.
Since $N_i $ is a sum of positive root vectors, the previous lemma guarantees for each $i$ that 
\[\Ad\left(u_{\beta_{i+1}}(x_{\beta_{i+1}})^{-1}\right)(N_i) - N_i = \sum_{\gamma \in \roots^+: ht(\gamma) > 1} c_{\gamma} E_{\gamma}.\]  
(Indeed, the lemma proves more.) We know that $N_0 =  \sum_{\alpha_j \in \Delta} E_{\alpha_j}$.  By induction $N_i$ can be decomposed into root vectors as 
\[N_i = \sum_{\alpha_j \in \Delta} E_{\alpha_j} + \sum_{ht(\gamma)>1} n_{\gamma, i} E_{\gamma}.\]  The claim follows.
\end{proof}

\subsection{Paving by Affines}
 
We now prove the main result of this section: when $X$ is a sum of simple root vectors, the intersection of the Schubert cell $BwB/B$ with the Hessenberg variety $\mathcal{H}(X,H)$ is homeomorphic to affine space.  Moreover,  we give a closed formula for the dimension of the intersection when it is nonempty.

\begin{theorem} \label{paving}
Let $\mathfrak{g}$ be a Lie algebra of arbitrary Lie type.  Choose root vectors $E_{\alpha_i}$ in each simple root space $\mathfrak{g}_{\alpha_i}$ of $\mathfrak{g}$.   If $X = \sum_{\alpha_i \in \Delta} E_{\alpha_i} $ then 
\begin{enumerate}
\item the variety $U_{w,H}= \{ u \in U_w : \Ad(u^{-1})(X) \in w H w^{-1} \}$ is isomorphic to affine space $\CC^\dimn$ of dimension 
$\dimn =|\roots_w \cap w M_H|$ and
\item this set is nonempty if and only if $ w^{-1} (\alpha_j) \in M_H$ for all $j$.
\end{enumerate}
\end{theorem}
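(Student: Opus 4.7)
The plan is to coordinatize $U_w \cong \CC^{|\roots_w|}$ by $(x_\alpha)_{\alpha \in \roots_w}$ using the height-ordered product decomposition $U_w = \prod_{\alpha \in \roots_w} U_\alpha$ from Lemmas \ref{H3} and \ref{H4}, and then cut out $U_{w,H}$ by explicit polynomial equations. Corollary \ref{corollary: adjoint on regular nilpotent} guarantees that $\Ad(u^{-1})(X)$ is a sum of positive root vectors for any $u \in U$, so the defining condition $\Ad(u^{-1})(X) \in wHw^{-1}$ is equivalent to $c_\gamma(x) = 0$ for every positive root $\gamma$ outside $w\roots^{+} \cup wM_H$. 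Since $\roots^{+} \setminus w\roots^{+} = \roots_w$, the binding constraints are indexed exactly by $\gamma \in \roots_w \setminus wM_H$, and so the expected dimension of the solution set is $|\roots_w| - |\roots_w \setminus wM_H| = |\roots_w \cap wM_H|$, matching the formula in (1).

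The key technical tool is to solve these equations inductively by height, using Lemma \ref{lemma: adjoint calculation}. At a positive root $\gamma$ of height $i+1$, that lemma shows the coefficient $c_\gamma$ decomposes as a linear function of the height-$i$ coordinates $(x_\alpha)_{\alpha \in \roots_w \cap \roots_i}$, with coefficients coming from the structure constants $m_{\alpha, \gamma-\alpha}$ whenever $\gamma - \alpha$ is simple, plus a polynomial correction in the strictly lower-height coordinates. I would realize $U_{w,H}$ as an iterated fiber bundle: starting from a point, at each stage $i$ the partial-coordinate variety $A_i$ is built from $A_{i-1}$ by introducing the new height-$i$ coordinates subject to the equations $c_\gamma = 0$ for $\gamma \in \roots_w \setminus wM_H$ of height $i+1$. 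Each fiber of $A_i \to A_{i-1}$ is then the affine solution set of a linear system in the height-$i$ coordinates (with inhomogeneous term a polynomial in already-determined lower-height data), and summing the fiber dimensions should give the desired total $|\roots_w \cap wM_H|$.

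The main obstacle --- and the reason for deferring material to the appendix --- is showing that each fiber of $A_i \to A_{i-1}$ is a constant-dimensional affine space of dimension exactly $|\roots_i \cap \roots_w \cap wM_H|$. This reduces to a combinatorial/matrix statement: the structure-constant matrix $M_i$ with rows indexed by $\gamma \in \roots_{i+1} \cap (\roots_w \setminus wM_H)$ and columns by $\alpha \in \roots_i \cap \roots_w$ must admit an invertible square submatrix whose columns are indexed by $\roots_i \cap (\roots_w \setminus wM_H)$. Equivalently, for each ``bad'' $\gamma$ at height $i+1$ one must select a distinct ``bad'' $\alpha$ at height $i$ with $\gamma - \alpha$ simple and $m_{\alpha, \gamma-\alpha} \neq 0$. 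The Hessenberg closure property of $M_H$ (stability under adding a simple root while remaining in $\roots^{-}$) is the main structural input, but the pivoting assignment still requires case-by-case verification in the exceptional Lie types using tables of structure constants; classical types admit a uniform argument.

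For part (2), Corollary \ref{corollary: adjoint on regular nilpotent} implies that the height-$1$ component of $\Ad(u^{-1})(X)$ equals that of $X$ itself, namely $\sum_{\alpha_j \in \Delta} E_{\alpha_j}$, independently of $u$. Membership in $wHw^{-1}$ therefore forces every simple root $\alpha_j$ to be a root of $wHw^{-1}$, equivalently $w^{-1}\alpha_j$ to be a root of $H$, which is the stated condition $w^{-1}(\alpha_j) \in M_H$ (under the convention that $M_H$ records the roots of $H$ beyond $\mathfrak{b}$). Conversely, this simple-root condition is exactly what prevents an inconsistency at the base of the inductive solving in (1): once height $1$ is handled, every subsequent height-$(i+1)$ linear system is consistent by the pivoting argument of paragraph three, so the procedure produces a nonempty solution set.
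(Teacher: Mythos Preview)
Your overall architecture --- coordinatize $U_w$ by height, cut out $U_{w,H}$ by the equations $c_\gamma = 0$ for $\gamma \in \roots_w \setminus wM_H$, and build an iterated affine fiber bundle height by height --- matches the paper's proof. Part (2) is also handled exactly as in the paper. The gap is in your analysis of the fiber dimensions and the pivoting.

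Your claim that the fiber of $A_i \to A_{i-1}$ has dimension $|\roots_i \cap \roots_w \cap wM_H|$, equivalently that the constraint matrix admits an invertible square submatrix on the columns indexed by $\roots_i \cap (\roots_w \setminus wM_H)$, is false: it would force
\[
|\roots_{i+1} \cap (\roots_w \setminus wM_H)| \;=\; |\roots_i \cap (\roots_w \setminus wM_H)|
\]
for every $i$, and this already fails for the Peterson variety in type $A_2$ with $w = w_0$. There $\roots_w \setminus wM_H = \{\alpha_1+\alpha_2\}$, so at $i=1$ you have two free variables $x_{\alpha_1}, x_{\alpha_2}$, one genuine height-$2$ constraint, and \emph{zero} ``bad'' height-$1$ roots to pivot on; the fiber dimension is $1$, not $2$. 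In other words, a bad $\gamma$ at height $i+1$ need not sit above any bad $\alpha$ at height $i$, so the Hessenberg closure property of $M_H$ does not give you the system of distinct representatives you want. (Your individual fiber dimensions are wrong even though their sum is correct.)

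What the paper does instead is prove that the constraint matrix has full \emph{row} rank, with no attempt to identify which columns are pivots. The correct fiber dimension is $|\roots_i \cap \roots_w| - |\roots_{i+1} \cap (wM_H)^c|$, and the key linear-algebra fact is that the rows $\{{\bf v}_\gamma : \gamma \in \Gamma\}$ are linearly independent for \emph{every} subset $\Gamma \subseteq \roots_{i+1}$, not just the ``bad'' one determined by $w$ and $H$. This is established via an injection $r_i:\roots_{i+1}\hookrightarrow\roots_i$ (independent of $w$ and $H$) for which the square matrix $(m_{r_i(\gamma'),\,\gamma-r_i(\gamma')})$ is invertible; the appendix verifies this uniformly in classical types and by direct computation in the exceptional ones. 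The Hessenberg closure property enters only to check that every variable appearing in a constraint actually lies in $\roots_w$, not to count pivots.
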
 

\begin{proof}

Define the subspace $\mathcal{Z}_i \subseteq \left( U_1 \cdots U_{i-1} U_i \right) \cap U_w$ by
\[\mathcal{Z}_i = \left\{u \in (U_1 \cdots U_{i-1} U_i) \cap U_w:  \begin{array}{c} \textup{ if we denote } \Ad(u^{-1})(X) = \sum_{\gamma \in \roots^+} c_{\gamma} E_{\gamma} \textup{ then } \vspace{0.5em} \\\sum_{\gamma \in \roots_1 \cup \roots_2 \cup \cdots \cup \roots_i \cup \roots_{i+1}} c_{\gamma} E_{\gamma} \in wHw^{-1} \end{array} \right\}.\]
Our convention is that if $i=0$ then $\mathcal{Z}_0 \subseteq \{e\}$ where $e$ is the identity element of $U$.  If $N$ is the maximum height of a root in $\roots^+$ then we have $\mathcal{Z}_{N} = U_{w,H}$ since $U_1 \cdots U_{N-1} U_N = U$ and $\roots_{N+1}$ is empty.  

The factorization $(U_1U_2 \cdots U_{i-1}U_i) \cap U_w$ induces a natural surjection $f_i: \mathcal{Z}_i \rightarrow \mathcal{Z}_{i-1}$ according to the rule that $f_i(u_1u_2 \cdots  u_{i-1}u_i) = u_1 \cdots u_{i-1}$.  We will show by induction on $i$ that each $f_i$ is an affine fiber bundle; it will follow that $\mathcal{Z}_N$ is homeomorphic to affine space.

Our inductive claim is that the subspace $\mathcal{Z}_{i}$ is affine, nonempty if and only if the root $ w^{-1} (\alpha_j) \in M_H$ for all $j$, and if nonempty has dimension
\[\sum_{j=1}^i \left( \left| \roots_{j} \cap \roots_w \right| - \left| \roots_{j+1} \cap (wM_H)^c \right| \right).\]
(Our convention is that this sum is zero if $i=0$.)  

The base case is when $i=0$.  In this case $u =e$ and so $\Ad(e^{-1})(X) = X$.  Thus $u \in \mathcal{Z}_0$ if and only if $X \in w H w^{-1}$.  Equivalently, since $X = \sum_{\alpha_i \in \Delta} E_{\alpha_i}$ we have $u \in \mathcal{Z}_0$ if and only if $w^{-1} (\alpha_i) \in M_H$ for all $i$.  If nonempty $\mathcal{Z}_0$ has dimension zero as desired.

Now assume the claim for $\mathcal{Z}_{i-1}$.  Let $u \in \mathcal{Z}_{i-1}$ and consider the set 
\[\left\{u' \in U_{i}: uu' \in (U_1U_{2} \cdots U_{i-2}U_{i-1})U_i \cap U_w \right\}.\]
We will show that the map $f_i: \mathcal{Z}_i \rightarrow \mathcal{Z}_{i-1}$ is an affine fiber bundle whose fiber has dimension $ \left| \roots_{i} \cap \roots_w \right| - \left| \roots_{i+1} \cap (wM_H)^c \right|$.

We know  $\Ad((uu')^{-1})(X) = \Ad(u'^{-1}) \left( \Ad(u^{-1})(X) \right)$.  Denote $\Ad(u^{-1})(X) =  \sum_{\gamma} \nn_{\gamma} E_{\gamma}$.
Since $u \in \mathcal{Z}_{i-1}$ the coefficients $\nn_{\gamma}=0$  if both $ht(\gamma) < i$ and $E_{\gamma} \not \in wHw^{-1}$.  By Lemma \ref{lemma: adjoint calculation}
\[\Ad((uu')^{-1})(X) = \sum_{ht(\gamma) \leq i} \nn_{\gamma} E_{\gamma} + \sum_{ht(\gamma) > i} d_{\gamma} E_{\gamma}\]
where the coefficients $\nn_{\gamma}$ are the same as those in $\Ad(u^{-1})(X)$.  Corollary~\ref{corollary: adjoint on regular nilpotent} showed that $\nn_{\alpha_j} = 1$ for all simple roots $\alpha_j$. Lemma \ref{lemma: adjoint calculation} says that if $ht(\gamma) = i+1$ then 
\begin{equation} \label{equation: eqns with constant term}
d_{\gamma} = \nn_{\gamma}+ \sum_{\alpha {\textup{ s.t. }}ht(\alpha) = i \textup{ and } \gamma - \alpha \in \Delta } \m_{\alpha,\gamma-\alpha} x_\alpha.
\end{equation}

We must now show that the variety parameterized by the set of $x_{\alpha}$ that simultaneously solve the equations 
\begin{equation} \label{equation: system of eqns}
\left\{d_{\gamma} = 0 \textup{ if } \gamma \in \roots_{i+1} \textup{ but } \gamma \not \in wM_H \right\}
\end{equation}
is affine, that the dimension of the solution set is independent of the choice of $u \in \mathcal{Z}_{i-1}$ (in other words, it is independent of $\nn_{\gamma}$ in Equation~\eqref{equation: eqns with constant term}), and compute that dimension.  
We first note that $\m_{\alpha, \alpha_j} \neq 0$ for all types and all simple roots $\alpha_j$ by inspection.

Fix $\gamma \in \roots_{i+1}$ with $w^{-1}(\gamma) \notin M_H$.  We confirm that the variables $x_{\alpha}$ that appear in Equation \eqref{equation: system of eqns} are not identically zero, namely that if $\alpha \in \roots_{i}$ and $\gamma - \alpha =\alpha_j$ is simple then in fact $\alpha \in \roots_{i} \cap \roots_w$.  To show $\alpha \in \roots_w$ we will prove $w^{-1}(\alpha)<0$.  Recall that $w^{-1}$ is a linear functional on the set of roots so 
\[w^{-1}(\gamma) = w^{-1}(\alpha+ \alpha_j) = w^{-1}(\alpha)+w^{-1}(\alpha_j).\] 
Suppose by contradiction that $w^{-1}(\alpha) >0$.  Since $w^{-1}(\alpha_j) \in M_H$ by the inductive hypothesis we have 
\[w^{-1}(\alpha_j) + w^{-1}(\alpha) \in M_H\] 
which contradicts our hypothesis on $\gamma$.  We conclude that $w^{-1}(\alpha)<0$ as desired.

Our next goal is to show that the system of Equations~\eqref{equation: system of eqns} has full rank regardless of the values of $\nn_{\gamma}$ 
from Equation~\eqref{equation: eqns with constant term}.  We will prove a stronger claim.  Let ${\bf v}_{\gamma}$ be the vector whose entries are parametrized by $\alpha \in \roots_i$ and whose $\alpha^{th}$ entry is $m_{\alpha, \gamma-\alpha}$ if $\gamma - \alpha \in \Delta$ and $0$ otherwise.  We will show that for every subset 
\[\Gamma \subseteq \left\{\gamma: \gamma  \in \roots_{i+1} \textup{ but } \gamma \not \in wM_H \right\} \] 
the vectors $\{\bf{v}_\gamma: \gamma \in \Gamma\}$ are linearly independent.

This proof is given in the Appendix.  In classical types, we define an injective function $\rr$ that assigns to each root 
$\gamma \in \roots_{i+1}$ a root  $\rr(\gamma) \in \roots_{i}$ with $\gamma - \rr(\gamma) \in \Delta$.   We then consider the $|\roots_{i+1}| \times |\roots_{i+1}|$ matrix whose $(\gamma, \gamma')$ entry is given by 
$m_{\rr(\gamma'), \gamma - \rr(\gamma')}$ if $\gamma - \rr(\gamma')$ is a root and is zero otherwise. 
Lemma \ref{appendix1} confirms in classical types that this matrix is invertible.  
It follows that any subset of rows from the matrix is linearly independent.  
The vectors $\{{\bf v}_{\gamma}: \gamma \in \Gamma\}$ described above 
may have additional columns but  remain linearly independent.
Lemma \ref{appendix2} proves by explicit calculations that the analogous matrices have maximal rank in the exceptional types.

We conclude that the system of Equations~\eqref{equation: system of eqns} is consistent regardless of the values of $\nn_{\gamma}$.  This system has $\left| \roots_{i+1} \cap (wM_H)^c \right|$ equations in the $ \left| \roots_{i} \cap \roots_w \right|$ variables $x_{\alpha}$ (not all of which need to appear in the system of equations).  The dimension of the solution space is thus
\begin{equation} \label{equation: partial dimension}
 \left| \roots_{i} \cap \roots_w \right| - \left| \roots_{i+1} \cap (wM_H)^c \right|.
 \end{equation}
It follows that the map $f_i: \mathcal{Z}_i \rightarrow \mathcal{Z}_{i-1}$ is an affine fiber bundle with dimension of the fiber given in Equation~\eqref{equation: partial dimension}.  By induction we conclude that $\mathcal{Z}_N$ is an iterated sequence of affine fiber bundles over an affine space.  In other words, the space $\mathcal{Z}_N$ is homeomorphic to affine space.   It is nonempty  if and only if $ w^{-1} (\alpha_j) \in M_H$ for all $j$.  Moreover, if nonempty, the dimension of $\mathcal{Z}_N$ is
\[ \sum_{i=1}^N \left( \left| \roots_{i} \cap \roots_w \right| - \left| \roots_{i+1} \cap (wM_H)^c \right| \right).\]
By convention we know $\roots_{N+1} = \emptyset$ and by the hypothesis that $ w^{-1} (\alpha_j) \in M_H$ for all $j$ we know $\roots_1 \cap  (wM_H)^c = \emptyset$.  It follows that
\[ \sum_{i=1}^N \left( \left| \roots_{i} \cap \roots_w \right| - \left| \roots_{i+1} \cap (wM_H)^c \right| \right) =  |\roots_w| - |\roots^+ \cap (wM_H)^c|  \]
which is $|\roots_w \cap wM_H |$ as desired.
\end{proof}



\section{Intersections of Peterson-Schubert varieties and Schubert varieties}  \label{3}
In this section we collect results that show that the Hessenberg Schubert varieties correspond to a basis of $H_*(\Hess, \ZZ)$.  
We then restrict our attention to the case of the Peterson variety 
and show that Peterson Schubert varieties intersect certain Schubert varieties properly in a point. These intersections will be used in Section \ref{classexpansions} to give a geometric proof 
that the map $i_*: H_*(\Hess, \ZZ) \rightarrow H_*(G/B, \ZZ)$ is an injection.

\subsection{Torus actions and Bialynicki-Birula cells}
We begin this subsection by recounting some standard facts 
about torus actions and Bialynicki-Birula decompositions of the flag variety.   
Then we use these facts to prove that the Hessenberg Schubert cells identified in Theorem \ref{paving} correspond to generators of $H_*(X,\ZZ)$.
We conclude this section with a useful criterion for identifying which Hessenberg Schubert cells intersect Schubert varieties.      
We state the results used in this section narrowly, specialized to the context in which we need them, but provide references for the interested reader.

Let $T \subset B$ be the maximal torus acting on $G/B$ by left multiplication.  The set of $T$-fixed points in $G/B$ is precisely the set of flags corresponding
to Weyl group elements: \[ G/B^T= \{ wB \in G/B : w \in W \} .\]  
Bialynicki-Birula proved that complete algebraic varieties (like the flag variety $G/B$) with suitable one-dimensional torus actions 
can be partitioned into subspaces according to the fixed point into which  torus orbits flow.  
This partition, known as the Bialynicki-Birula decomposition and described next, 
is a particularly useful torus-invariant decomposition of a variety.

\begin{proposition} \cite[Theorem 4.4]{BB76} Let $\lambda: G_m \rightarrow T$ be a one-parameter subgroup of the torus that acts on a complete scheme 
$X$ with isolated fixed points $\{ p_1,\ldots,p_k \}$.  Define the cells 
\[BB^+_{p_i} =  \left \{ x \in X  \mid \lim_{z \rightarrow 0} \lambda(z)x = p_i  \right \}\]
and 
\[BB^-_{p_i} = \left \{ x \in X  \mid \lim_{z \rightarrow \infty} \lambda(z)x = p_i  \right \}\] 
as the collection of points lying on $G_m$-invariant curves that limit into or out of $p_i$ respectively.
These cells give two canonical decompositions of $X$, namely $X = \coprod BB^+_{p_i}$ and $ X = \coprod BB^-_{p_i}$.  
Moreover, each cell is an affine space
$BB^+_{p_i} \cong \CC^{\ell_i}$ (respectively $BB^-_{p_i} \cong \CC^{m_i}$) and contains exactly one $G_m$-fixed point.
\end{proposition}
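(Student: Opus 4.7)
The plan is to combine local linearization of the $G_m$-action at each fixed point with properness of $X$ to obtain a global affine cell decomposition, following the classical Bialynicki-Birula strategy. I would proceed in three steps.

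First, I would establish the set-theoretic partition. Since $X$ is complete, for any $x \in X$ the map $G_m \to X$ sending $z \mapsto \lambda(z) \cdot x$ extends uniquely to a morphism $\mathbb{P}^1 \to X$. The images of $0$ and $\infty$ are $G_m$-fixed, so both lie in $\{p_1,\ldots,p_k\}$. Assigning $x$ to these two limits gives the canonical decompositions $X = \coprod_i BB^+_{p_i}$ and $X = \coprod_i BB^-_{p_i}$. Each cell contains $p_i$ and no other fixed point, because the fixed points are assumed isolated.

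Second, I would analyze the local geometry at each $p_i$, assuming (as is necessary for the affine-space conclusion) that $X$ is smooth at each fixed point. By Sumihiro's theorem, $p_i$ admits a $G_m$-stable affine open neighborhood, and $G_m$-equivariant linearization yields an equivariant isomorphism of formal neighborhoods $\widehat{X}_{p_i} \cong \widehat{T_{p_i}X}$, where $G_m$ acts linearly on $T_{p_i}X$ through the differential of $\lambda$. Decompose $T_{p_i}X = T^+_{p_i} \oplus T^0_{p_i} \oplus T^-_{p_i}$ into positive, zero, and negative weight spaces. Isolation of $p_i$ in the fixed-point set forces $T^0_{p_i} = 0$, so locally $BB^+_{p_i}$ is the attracting subspace modeled on $T^+_{p_i}$ and $BB^-_{p_i}$ on $T^-_{p_i}$.

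Third, I would globalize by using the $G_m$-flow to spread the local picture. Given any $x \in BB^+_{p_i}$, the orbit $\lambda(z) \cdot x$ eventually enters any prescribed neighborhood of $p_i$ as $z \to 0$, so the local affine coordinates at $p_i$ can be transported along the flow to give a global chart on $BB^+_{p_i}$. A careful limit argument, combined with an equivariant embedding $X \hookrightarrow \mathbb{P}^N$ in which the decomposition is transparent from the weight grading of the ambient projective space, shows $BB^+_{p_i} \cong \CC^{\ell_i}$ with $\ell_i = \dim T^+_{p_i}$, and analogously $BB^-_{p_i} \cong \CC^{m_i}$ with $m_i = \dim T^-_{p_i}$.

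The main obstacle is the globalization step: passing from a formal or analytic statement in a neighborhood of each $p_i$ to an algebraic isomorphism of the entire cell with affine space. This requires controlling how the attracting flow deforms coordinates away from $p_i$, and the cleanest resolution is to embed $X$ equivariantly into projective space, decompose $\mathbb{P}^N$ explicitly using the weights of $G_m$, and intersect with $X$; combining this with linearization at each smooth fixed point yields the global affine structure. Without smoothness the argument breaks and one is only guaranteed that the cells are locally closed subschemes rather than affine spaces.
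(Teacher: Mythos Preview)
The paper does not supply its own proof of this proposition: it is quoted verbatim as \cite[Theorem 4.4]{BB76} and used as a black box. There is therefore no in-paper argument to compare against.

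Your sketch is a reasonable outline of the classical Bialynicki-Birula argument. The first step (extending orbit maps to $\mathbb{P}^1$ via completeness) is exactly right and gives the set-theoretic decomposition immediately. In the second step you correctly identify that smoothness at the fixed points is essential for the affine-space conclusion; the proposition as stated in the paper omits this hypothesis, but Bialynicki-Birula's original theorem assumes $X$ is a nonsingular complete variety, and without that one only gets locally closed strata. Your use of Sumihiro's theorem to obtain an invariant affine neighborhood is the standard modern route. The third step is where your outline is thinnest: the ``transport local coordinates along the flow'' description is heuristic, and the actual proof in \cite{BB76} proceeds more carefully by showing that the retraction $BB^+_{p_i} \to \{p_i\}$ is a Zariski-locally trivial affine bundle (over a point this is automatic, but the argument is set up for non-isolated fixed components), using the linearized action on the invariant affine chart rather than an ambient projective embedding. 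Your alternative via an equivariant projective embedding would also work but is not how Bialynicki-Birula organized it.
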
 

In general, Bialynicki-Birula cells can have extraordinarily complicated topological properties.  
However, if a one-dimensional subtorus acts  generically on $G/B$ 
then in fact the Bialynicki-Birula decomposition agrees with the Schubert cell decomposition for $X=G/B$.

\begin{proposition}\cite[p. 546]{Aky81}  
Suppose $\lambda: G_m  \rightarrow T$ is a one-parameter subgroup that acts on $G/B$ with  $\langle \alpha, \lambda \rangle > 0$ for all $\alpha \in \roots^+$.  
Then   $(G/B)^{G_m} = (G/B)^T$ and the Bialynicki-Birula decomposition \( \{BB^{-}_w: w \in W \} \)   
agrees with the Schubert cell decomposition, in the sense that $BB^{-}_{w} = B^{-}wB/B$ for each $w \in W$.  
\end{proposition}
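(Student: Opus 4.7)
The plan has two parts, corresponding to the two assertions. First I would show $(G/B)^{G_m} = (G/B)^{T}$. The inclusion $\supseteq$ is immediate since $\lambda(G_m) \subseteq T$. For the reverse, note that $(G/B)^{G_m}$ is closed in $G/B$ and $T$-invariant, because $T$ is abelian and commutes with $G_m = \lambda(G_m)$. By the Borel fixed point theorem each connected component of $(G/B)^{G_m}$ contains a $T$-fixed point, so it suffices to prove each $wB$ is an isolated $G_m$-fixed point. Decomposing the tangent space as a $T$-module gives $T_{wB}(G/B) \cong \bigoplus_{\beta \in \roots^+} \mathfrak{g}_{-w\beta}$, so the $G_m$-weights are $\{-\langle w\beta, \lambda \rangle : \beta \in \roots^+\}$. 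The hypothesis $\langle \alpha, \lambda \rangle > 0$ for all $\alpha \in \roots^+$ implies $\langle \gamma, \lambda \rangle \neq 0$ for every root $\gamma$, so none of these weights vanishes. Since the tangent space to $(G/B)^{G_m}$ at $wB$ is the zero-weight subspace of $T_{wB}(G/B)$ under the $G_m$-action, it is trivial. Thus each $T$-fixed point is an isolated smooth point of $(G/B)^{G_m}$, and combined with the Borel fixed point observation this forces $(G/B)^{G_m} = (G/B)^{T}$.

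Second, I would show the inclusion $B^- w B/B \subseteq BB^-_w$ for each $w$; since both $\{B^- w B / B\}_{w \in W}$ and $\{BB^-_w\}_{w \in W}$ partition $G/B$ into pieces indexed by $W$ and each $BB^-_w$ contains only the single fixed point $wB$, the inclusions force cell-by-cell equality. Writing the opposite Schubert cell in the standard way as $B^- w B/B = U^-_w \cdot wB$ with $U^-_w$ a product of root subgroups $U_\alpha$ for negative roots $\alpha$, I would fix $x = u \cdot wB$ in this cell and compute
\[ \lambda(z) \cdot x = \lambda(z) u \lambda(z)^{-1} \cdot \lambda(z) \cdot wB = \lambda(z) u \lambda(z)^{-1} \cdot wB, \]
using that $wB$ is $T$-fixed. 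Conjugation within a root subgroup rescales parameters by $\lambda(z) u_\alpha(y) \lambda(z)^{-1} = u_\alpha(z^{\langle \alpha, \lambda \rangle} y)$, and because $\alpha$ is a negative root the exponent $\langle \alpha, \lambda \rangle < 0$, so $z^{\langle \alpha, \lambda \rangle} \to 0$ as $z \to \infty$. Hence each root-subgroup factor of $\lambda(z) u \lambda(z)^{-1}$ tends to the identity, and $\lim_{z \to \infty} \lambda(z) \cdot x = wB$, placing $x$ in $BB^-_w$ as required.

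The main obstacle is concentrated in the first part: ruling out positive-dimensional components of $(G/B)^{G_m}$. The conceptual key is that regularity of $\lambda$ (that it pairs nontrivially with every root) forces the zero-weight subspace of the $G_m$-action to vanish at each $T$-fixed point, so the tangent space analysis, combined with the Borel fixed point theorem applied to the $T$-action on $(G/B)^{G_m}$, collapses the fixed locus to the finite set $(G/B)^T$. The second part is essentially a bookkeeping computation with root-subgroup conjugation once the standard parametrization of $B^- w B/B$ is invoked; the only subtlety is sign conventions, to ensure the limit direction $z \to \infty$ matches the definition of $BB^-_w$.
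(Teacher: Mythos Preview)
Your proof is correct. Note, however, that the paper does not supply its own proof of this proposition: it is quoted as a result of Akyildiz \cite[p.~546]{Aky81} and used as a black box. So there is no argument in the paper to compare against.

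For what it is worth, your argument is the standard one and is exactly the kind of proof one would expect for this statement. The tangent-space computation in Part~1 is the usual way to see that a regular one-parameter subgroup has the same fixed points as the full torus, and your Part~2 computation---conjugating root-subgroup parameters by $\lambda(z)$ and reading off the sign of $\langle\alpha,\lambda\rangle$---is precisely how one matches Bialynicki--Birula cells with Bruhat cells. The only cosmetic point: you might state explicitly that the fixed locus $(G/B)^{G_m}$ is smooth (as the fixed locus of a torus action on a smooth variety), which is what justifies identifying its tangent space at $wB$ with the zero-weight subspace of $T_{wB}(G/B)$ and hence concluding isolatedness from the tangent-space calculation.
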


The previous proposition is stated in terms of the  Bialynicki-Birula minus decomposition and the opposite Schubert decomposition to what we use here.  The proposition also implies that the  Bialynicki-Birula plus decomposition gives the Schubert decomposition we use in this manuscript, namely $BB^{+}_{w} = BwB/B$ for each $w \in W$.

Harada-Tymoczko identified a subtorus $S \subseteq T$ that they proved satisfies $\langle S , \alpha \rangle > 0$ for all $\alpha \in \roots^+$ \cite[Lemma 5.1]{HT10}.  
Thus the Bialynicki-Birula decomposition of $G/B$ with respect to $S$ agrees with the Schubert cell decomposition, in the sense that $BB^+_w = C_w$ for each $w \in W$.  Moreover, Harada-Tymoczko proved that $S$ acts on the regular nilpotent Hessenberg varieties, assuming that $X=X_0$ is chosen as in Fact \ref{fact: ColMcG}.    This means that
 the paving by affines obtained in Section \ref{section:paving}
is a generalization of the Bialynicki-Birula decomposition to the (singular) regular nilpotent Hessenberg varieties.  (Parts of this are implicit in Kostant's work on the Peterson variety \cite[p 64-65]{K}.)
Henceforth, $S \subset T$ will denote the particular one-dimensional torus used by Harada-Tymoczko \cite{HT10}.

Since the fixed point set $\Hess^S$ is finite and each cell $C_{w, \mathcal H} \cong U_{w, \mathcal H}$ is homeomorphic to affine space, 
the following result of Carrell and Goresky can be used to show the closures of these cells correspond naturally to generators of $H_*(X,\ZZ)$.
We restate their result in the weaker form that suffices for our purposes.  
 
\begin{proposition} [Carrell and Goresky \cite{CG83}] \label{CarGor}
Let $Y$ be a compact Kaehler manifold with a holomorphic $\mathbb{C}^*$-action that has nonempty fixed point set.  
Let $X$ be a closed $\mathbb{C}^*$-invariant subspace of $Y$ with finite isolated $\mathbb{C}^*$-fixed points $X^{\mathbb{C}^*} = \{X_1, X_2, \ldots, X_r\}$.  
Denote the Bialynicki-Birula plus decomposition of $X$ with respect to $\mathbb{C}^*$ by $X = \bigcup_{j=1}^r X_j^+$.
If each $X_j^+$ is homeomorphic to affine space $\mathbb{C}^{m_j}$ then the closures $\overline{X_j^+}$ generate $H_*(X, \ZZ)$ freely.  
\end{proposition}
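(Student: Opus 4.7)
The plan is to build up $H_*(X, \ZZ)$ by filtering $X$ with closed $\CC^*$-invariant subspaces, each obtained from the previous by adjoining exactly one Bialynicki--Birula cell, and then to exploit the fact that every cell has even real dimension (being a complex affine space) to force all connecting homomorphisms in the resulting long exact sequences to vanish.

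First I would introduce a total order on the fixed points compatible with the $\CC^*$-orbit closure relations. The Kaehler hypothesis on $Y$ provides, for the compact subgroup $S^1 \subset \CC^*$, a moment map $\mu : Y \to \RR$ whose critical set equals $Y^{\CC^*}$; after restricting to $X$ and perturbing slightly if necessary I may assume $\mu(X_1) < \mu(X_2) < \cdots < \mu(X_r)$. Setting $Z_k = \bigcup_{j \le k} X_j^+$, the essential Morse-theoretic fact is the closure relation $\overline{X_j^+} \subseteq \bigcup_{i : \mu(X_i) \le \mu(X_j)} X_i^+$. This implies each $Z_k$ is closed in $X$, so the $Z_k$ form an increasing filtration by closed subsets with $Z_k \setminus Z_{k-1} = X_k^+$.

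Next I would apply the long exact sequence of the pair $(Z_k, Z_{k-1})$. Since $Z_k \setminus Z_{k-1} = X_k^+$ is homeomorphic to $\CC^{m_k}$ with $X_k$ corresponding to the origin, an excision argument around a suitable neighborhood of $X_k$ in $Z_k$ gives
\[
H_i(Z_k, Z_{k-1}; \ZZ) \cong \begin{cases} \ZZ & \text{if } i = 2m_k, \\ 0 & \text{otherwise.} \end{cases}
\]
Because every relative group $H_*(Z_k, Z_{k-1})$ is concentrated in even degree, a parity argument forces all connecting maps in every long exact sequence to vanish, so inductively $H_*(Z_k, \ZZ)$ is free abelian with a basis indexed by $\{j \le k\}$. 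The generator in degree $2m_j$ can be represented by the fundamental class $[\overline{X_j^+}]$: the relative fundamental class $[Z_j, Z_{j-1}]$ is carried by $\overline{X_j^+}$, and its image in $H_{2m_j}(Z_k)$ under the long exact sequence matches $[\overline{X_j^+}]$ up to sign.

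The hard part will be establishing the Bialynicki--Birula closure relation $\overline{X_j^+} \subseteq \bigcup_{i : \mu(X_i) \le \mu(X_j)} X_i^+$. In the algebraic category this is Bialynicki--Birula's original theorem; in the Kaehler category it can be obtained by identifying the plus cell $X_j^+$ with the unstable manifold of $X_j$ under the negative gradient flow of $\mu$ for the Kaehler metric, since this flow coincides up to reparametrization with the action of $\RR_{>0} \subset \CC^*$. Once this closure relation and the even-dimensionality of the cells are in hand, the remainder reduces to a standard cellular-homology argument.
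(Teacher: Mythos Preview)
Your approach is valid but genuinely different from the paper's. The paper does not argue directly; it simply verifies that the $\CC^*$-action is ``good'' in the sense of Carrell--Goresky (two of their four conditions hold for any Bialynicki--Birula decomposition, the fibre-bundle condition is immediate when each fixed component $X_j$ is a point with $X_j^+\cong\CC^{m_j}$, and the Whitney-stratification condition is vacuous for isolated fixed points) and then cites their isomorphism $\bigoplus_j\mu_j:\bigoplus_j H_{i-2m_j}(X_j)\to H_i(X)$ as a black box. What you are doing is essentially re-deriving the relevant special case of that theorem via the standard ``paving by affines'' filtration argument, which is more elementary and self-contained; the paper's route is shorter but less transparent.

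Two small cautions on your write-up. First, the phrase ``perturbing slightly if necessary'' is dangerous: an arbitrary perturbation of $\mu$ destroys the identification of its gradient flow with the $\RR_{>0}\subset\CC^*$-action, and hence the closure relation you need. It is cleaner to keep $\mu$ unperturbed and either attach all cells with a given $\mu$-value simultaneously (the relative homology is then a finite direct sum of even-degree copies of $\ZZ$, so the parity argument still goes through), or argue separately that ties may be broken arbitrarily. Second, for the possibly singular pair $(Z_k,Z_{k-1})$ the identification $H_*(Z_k,Z_{k-1})\cong\widetilde H_*(S^{2m_k})$ is not automatic from excision alone; one typically passes through Borel--Moore homology of the open complement $Z_k\setminus Z_{k-1}\cong\CC^{m_k}$, or checks that the pair is good enough (e.g.\ an NDR-pair) for the quotient map to induce the isomorphism.
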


\begin{proof}
 Carrell and Goresky defined a {\em good} action of $\mathbb{C}^*$ to be one whose Bialynicki-Birula plus cells satisfy two conditions:
\begin{itemize}
\item For each $j=1,2,\ldots,r$ there exists a holomorphic map $p_j: X_j^+ \rightarrow X_j$ for which $X_j^+$ is a topologically locally trivial fiber bundle with affine fibers $\mathbb{C}^{m_j}$.
\item If $X_j$ is singular then it has an analytic Whitney stratification satisfying certain additional properties (see \cite[Section 1, Definition]{CG83} for details).
\end{itemize} 
(A priori, there are four conditions defining a good action \cite[Section 1, Definition]{CG83}, but two are satisfied by every Bialynicki-Birula decomposition \cite[Section 2, Remark]{CG83}.)  If each fixed point component $X_j$ is a point then the second condition is vacuous.  If each Bialynicki-Birula plus cell is an affine cell then the first condition is satisfied.  Carrell and Goresky define a map $\mu_j: H_i(X_j) \rightarrow H_{i+2m}(X)$ that factors through $H_{i+2m}(\overline{X_j^+})$ \cite[Section 5, Definition]{CG83}.  They then prove that the map $\bigoplus_j {\mu_j}: \bigoplus_j H_{i-2m_j}(X_j) \rightarrow H_i(X)$ is an isomorphism \cite[Section 6]{CG83}.  In particular, the classes of the closures $\overline{X_j^+}$ generate $H_*(X)$.
\end{proof}

\begin{corollary} \label{basis}
The closures of the cells $C_w \cap \Hess$ freely generate the integral homology $H_*(\Hess, \ZZ)$.  
\end{corollary}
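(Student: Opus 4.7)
The plan is to apply Proposition \ref{CarGor} directly with the ambient Kähler manifold $Y = G/B$ and the closed subspace $X = \Hess$. The key input is the Harada-Tymoczko one-dimensional subtorus $S \subset T$ noted earlier in this section: $S$ acts on $\Hess$ (because the action preserves $X_0$ up to scalar, via Fact \ref{fact: ColMcG}), and the Bialynicki-Birula plus decomposition of $G/B$ with respect to $S$ coincides with the Schubert cell decomposition, i.e.\ $BB^{+}_w = C_w$ for every $w \in W$.

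Next I would identify the Bialynicki-Birula plus cells of $\Hess$ with the Hessenberg Schubert cells from Theorem \ref{paving}. Because $\Hess$ is closed and $S$-invariant in $G/B$, every limit $\lim_{z \to 0} \lambda(z) x$ of a point $x \in \Hess$ under the one-parameter subgroup that factors $S$ lies in $\Hess$, so the plus cells of $\Hess$ are simply $BB^{+}_w \cap \Hess = C_w \cap \Hess = C_{w,\mathcal{H}}$. The $S$-fixed points of $\Hess$ are the intersection $(G/B)^T \cap \Hess$, which is the finite set $\{wB : w^{-1}(\alpha_j) \in M_H \text{ for all simple } \alpha_j\}$ by the second conclusion of Theorem \ref{paving}. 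In particular each fixed component is a point, so the analytic stratification condition in Proposition \ref{CarGor} is vacuous.

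It then remains only to verify the affineness hypothesis of Proposition \ref{CarGor}. The first conclusion of Theorem \ref{paving} says precisely that each nonempty Hessenberg Schubert cell $C_{w,\mathcal{H}} \cong U_{w,H}$ is homeomorphic to an affine space $\CC^d$ of dimension $|\roots_w \cap w M_H|$. Assembling these ingredients, Proposition \ref{CarGor} immediately yields that the closures $\overline{C_{w,\mathcal{H}}} = X_{w,\mathcal{H}}$ freely generate $H_*(\Hess, \ZZ)$, which is the desired conclusion. In honesty there is no real obstacle here: the substantive work was completed in Theorem \ref{paving}, and Corollary \ref{basis} is a bookkeeping consequence once the Harada-Tymoczko torus is used to match the Hessenberg Schubert decomposition with the Bialynicki-Birula decomposition.
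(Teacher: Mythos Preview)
Your proof is correct and follows essentially the same approach as the paper: set $Y=G/B$, $X=\Hess$, use the Harada--Tymoczko subtorus $S$ so that the Bialynicki--Birula plus cells of $\Hess$ are exactly the intersections $C_w\cap\Hess$, invoke Theorem~\ref{paving} to verify these are affine with isolated fixed points, and then apply Proposition~\ref{CarGor}. Your write-up is slightly more explicit about why the plus cells of $\Hess$ coincide with the Hessenberg Schubert cells and why the stratification condition is vacuous, but the logic is identical.
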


\begin{proof}
The generalized flag variety $G/B$ is a compact Kaehler manifold and the subtorus $S \subseteq T$ is a holomorphic $\mathbb{C}^*$-action with nonempty, 
finite fixed-point set.  Theorem \ref{paving} proved that the intersections $C_w \cap \Hess$ form a paving by affines of the Hessenberg varieties.  
The Schubert cells $C_w$ are also the Bialynicki-Birula plus cells of the $S$-action so the previous proposition applies.
\end{proof}

Unlike Schubert varieties in $G/B$, a {\em Hessenberg Schubert} variety may not be a union of Hessenberg Schubert cells.  
Indeed, the closure of a Hessenberg Schubert cell may intersect with other Hessenberg Schubert cells while not containing them. 
 Nonetheless, the torus $S$ acts on Hessenberg Schubert varieties.

\begin{lemma} \label{lemma:Hess Schubs are S-closed}
Fix a Hessenberg variety that admits an $S$-action and let $wB$ be any $S$-fixed point in $\Hess$.  The action of $S$ on the flag variety $G/B$ restricts to an action of $S$ on the Hessenberg Schubert variety $X_{w, \mathcal H}$.
\end{lemma}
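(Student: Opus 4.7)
The plan is to unwind the definitions and observe that $S$-invariance is inherited by intersections and closures. First, since $S\subseteq T$, the torus $S$ acts on the flag variety $G/B$ by left multiplication, and each Schubert cell $C_w = BwB/B$ is stable under this action because $T$ normalizes $B$. Second, by hypothesis $S$ acts on $\mathcal{H}(X,H)$. Combining these two facts, the Hessenberg Schubert cell $C_{w,\mathcal{H}} = C_w \cap \mathcal{H}(X,H)$ is an intersection of two $S$-invariant subsets of $G/B$, hence is itself $S$-invariant.

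Next, I would promote this set-theoretic invariance to invariance of the closure. Since the $S$-action on $G/B$ is algebraic (in particular continuous), for each $s\in S$ the left-multiplication map $\ell_s\colon G/B \to G/B$ is a homeomorphism. Applying $\ell_s$ to the identity $s\cdot C_{w,\mathcal{H}} = C_{w,\mathcal{H}}$ and using that homeomorphisms commute with closure yields $s\cdot \overline{C_{w,\mathcal{H}}} = \overline{C_{w,\mathcal{H}}}$. Because $\mathcal{H}(X,H)$ is closed in $G/B$, it does not matter whether the closure is taken in $\mathcal{H}(X,H)$ or in $G/B$; either way it equals $X_{w,\mathcal{H}}$. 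This gives the desired $S$-action on $X_{w,\mathcal{H}}$.

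Strictly speaking, the hypothesis only asserts that $wB$ is an $S$-fixed point of $\Hess$; to identify $C_{w,\mathcal{H}}$ with a Bialynicki--Birula plus cell (hence to know it is nonempty and actually contains $wB$), I would invoke the discussion preceding the lemma: the torus $S$ is chosen so that $BB^{+}_w = C_w$ in $G/B$, and the restricted action on $\Hess$ has Bialynicki--Birula plus cells $C_{w,\mathcal{H}}$. Thus the $S$-fixed points of $\Hess$ are exactly the points $wB$ for which $C_{w,\mathcal{H}}$ is nonempty, which ensures that $X_{w,\mathcal{H}}$ is a well-defined closed $S$-invariant subvariety of $\Hess$.

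There is no real obstacle here; the entire content of the lemma is that $S$-invariance passes through both intersection and closure, which are formal consequences of the $S$-action being algebraic together with $S\subseteq T$ preserving each Schubert cell. The one point requiring a sentence of care is to check that ``closure in $\mathcal{H}(X,H)$'' and ``closure in $G/B$'' agree for $C_{w,\mathcal{H}}$, which is immediate from $\mathcal{H}(X,H)$ being Zariski closed in $G/B$.
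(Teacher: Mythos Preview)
Your proposal is correct and follows essentially the same approach as the paper: both argue that $C_w$ and $\Hess$ are $S$-invariant, hence so is their intersection $C_{w,\mathcal H}$, and then pass to the closure using continuity of the $S$-action. The paper phrases the closure step via sequences (take $p_i\to p$ in $C_{w,\mathcal H}$ and use continuity to get $s\cdot p_i\to s\cdot p$), whereas you invoke the cleaner statement that each $\ell_s$ is a homeomorphism and homeomorphisms commute with closure; these are the same argument.
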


\begin{proof}
For each point $p \in X_{w, \mathcal H}$ there is a sequence of points $p_1, p_2, \ldots$ in $C_{w} \cap \Hess$ with limit $\lim_{i \mapsto \infty} p_i = p$.  For each $s \in S$ and each $i$,  the point $s \cdot p_i$ is in $C_{w} \cap \Hess$ because both $C_{w}$ and $\Hess$ carry $S$-actions.  The torus $S$ acts continuously on $G/B$ so the limit $\lim_{i \mapsto \infty} s \cdot p_i = s \cdot p$.  It follows that $s \cdot p$ is in $X_{w, \mathcal H}$ for each point $p \in X_{w, \mathcal H}$. 
\end{proof}

Since  Schubert cells and Hessenberg varieties are both $S$-invariant, 
we  obtain a useful criterion for identifying their intersections.
\begin{lemma} \label{lemma:fixpt}
 If the intersection 
$X^{v} \cap X_{w, \mathcal H}$ is nonempty 
then it must contain an $S$-fixed point $uB$.
If the intersection 
$X^{v} \cap (C_w \cap \Hess)$ is nonempty 
then it must contain the $S$-fixed point $wB$.  
\end{lemma}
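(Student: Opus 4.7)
My plan is to exploit two facts: first, that both $X^v$ and $X_{w,\mathcal H}$ are closed $S$-invariant subvarieties of the projective variety $G/B$, so their intersection is a projective $S$-variety; and second, that $C_w$ is the Bialynicki-Birula plus cell of $S$ at $wB$, so points in $C_w$ flow to $wB$ under the $S$-action.

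For the first statement, I would argue that $X^v$ is $T$-invariant (it is a Schubert variety for $B^{-}$), hence $S$-invariant, while Lemma~\ref{lemma:Hess Schubs are S-closed} shows that $X_{w,\mathcal H}$ is $S$-invariant.  Their intersection is thus a closed $S$-invariant subvariety of the projective variety $G/B$, and hence is itself a complete $S$-variety.  If nonempty, it must contain a closed $S$-orbit of minimal dimension, which by standard argument (or Borel's fixed point theorem) is a single point fixed by $S$.  That gives an $S$-fixed point $uB$ in the intersection.

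For the second statement, I would take any point $p$ in $X^v \cap (C_w \cap \Hess)$ and consider its $S$-orbit.  Since $C_w = BB^{+}_{wB}$ under the action of $S$, the limit $\lim_{z\to 0}\lambda(z)\cdot p$ equals $wB$.  Now $X^v$ and $\Hess$ are both closed and $S$-invariant, so the entire orbit $\lambda(z)\cdot p$ lies in $X^v \cap \Hess$ for every $z$, and passing to the limit places $wB$ in $X^v \cap \Hess$.  Since obviously $wB \in \overline{C_w}$, we conclude that $wB$ is an $S$-fixed point of $X^v \cap (C_w \cap \Hess)$, as claimed.

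The main conceptual point, which is used in both parts, is that the $S$-action ``pushes'' any point of an $S$-invariant closed subset towards an $S$-fixed point; for the second part we additionally use the specific fact that $wB$ is the unique $S$-fixed point in the cell $C_w$, which is exactly the content of the Bialynicki-Birula description recalled earlier in the section.  I do not expect any serious obstacle: the argument is a direct application of Bialynicki-Birula's cell structure together with the Borel fixed point theorem, and all the $S$-invariance statements we need have been established in the preceding lemmas.
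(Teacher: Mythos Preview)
Your argument is correct and close in spirit to the paper's, but the first part uses a slightly different tool. The paper proves both claims with the same explicit Bialynicki--Birula flow: given any point $gB$ in the intersection, it lies in some Schubert cell $C_u$, and since $C_u$ is the BB plus cell at $uB$, the limit $\lim_{z\to 0}\lambda(z)\cdot gB = uB$ lies in the closed $S$-invariant intersection; the second claim is then just the special case $u=w$. You instead invoke Borel's fixed point theorem for the first part. Both are fine; the paper's version is more elementary and self-contained (it never leaves the concrete BB setup already on the table), while your version is a clean one-line appeal to a standard result. For the second part your argument is identical to the paper's.

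One small imprecision: at the end you write ``$wB \in \overline{C_w}$'', but the statement asks for $wB \in X^v \cap (C_w \cap \Hess)$, so you need $wB \in C_w$, not merely its closure. This is of course true --- $wB$ is the unique $S$-fixed point of $C_w$ --- so there is no actual gap, but you should state it that way.
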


\begin{proof}  All of the spaces in question admit $S$-actions so their intersection also carries an action of $S$. 
If $gB \in C_u$ is contained in this intersection
then the limit $\lim_{z \rightarrow 0} \gamma(z)gB= uB$ must be contained in the intersection because the Schubert cells agree with the Bialynicki-Birula cells of the $S$-action on $G/B$.   This proves the first claim; for the second, take $u=w$.  \end{proof}

%
%

\section{Cells of the Peterson variety} \label{Peterson}

For the rest of this paper, we specialize from regular nilpotent Hessenberg varieties to Peterson varieties, namely the case when $M_H = \Delta^{-}$.  We begin by summarizing results  from the literature (including this paper), stated for the special case of the Peterson varieties.  We then identify Schubert cells in the flag variety $G/B$ that the Peterson Schubert cells intersect properly.  In subsequent sections, we will determine when the intersections are actually transverse and identify some consequences.

In Section \ref{3} we fixed a particular one-dimensional torus $S \subseteq T$ that is generic in the sense that the $S$-fixed points of $G/B$ are the same as the $T$-fixed points of $G/B$.  We also stated that $S$ acts on each regular nilpotent Hessenberg variety $\Hess$ using the assumption that $X$ has the form of Fact \ref{fact: ColMcG}.  Harada-Tymoczko showed that the $S$-fixed point $wB$ in $G/B$ is contained in the 
regular nilpotent Hessenberg variety $\Hess$ if and only if $w^{-1} X w \in H$  \cite[Lemma 5.1]{HT10}. 

For Peterson varieties, the description of the $S$-fixed points simplifies substantially.  
Fix a subset $J = \{ \alpha_{i_1}, \alpha_{i_2}, \ldots, \alpha_{i_{|J|} } \} \subset \Delta$ and consider 
the Coxeter subgroup $W_J \subseteq W$ generated by the simple reflections $\{s_{i_j}: \alpha_{i_j} \in J\}$.  
Each Coxeter group has a unique maximal element with respect to Bruhat order; 
denote the maximal element of $W_J$ by $w_J$ \cite[Proposition 2.3.1]{BjoBre03}.

\begin{proposition}\cite[Proposition 5.8]{HT10}
An $S$-fixed point $wB \in G/B$ is contained in the Peterson variety $\Pet$ if and only if $w=w_J$ is a maximal element of $W_J$ for some subset $J \subset \Delta$.  
\end{proposition}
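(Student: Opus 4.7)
The plan is to apply the Harada-Tymoczko criterion stated immediately above: $wB \in \Pet$ if and only if $w^{-1} X w \in \Pett$, where $X = \sum_{\alpha_i \in \Delta} E_{\alpha_i}$. Since $\Pett = \mathfrak{b} \oplus \bigoplus_{\alpha_i \in \Delta^{-}} \mathfrak{g}_{\alpha_i}$ and $w^{-1} X w$ is a sum of nonzero multiples of the root vectors $E_{w^{-1}(\alpha_i)}$, this unpacks to the concrete condition on $v := w^{-1}$:
\[ (\star)\qquad v(\alpha_i) \in \roots^{+} \cup \Delta^{-} \text{ for every simple root } \alpha_i \in \Delta. \]
Assuming $(\star)$, I set $J = \{ \alpha_i \in \Delta : v(\alpha_i) \in \Delta^{-} \}$ and aim to show $v = w_J$, which gives $w = w_J^{-1} = w_J$ since $w_J$ is an involution.

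The forward direction (that $v = w_J$ satisfies $(\star)$ with this $J$) is a direct check. As the longest element of $W_J$, the involution $w_J$ sends each $\alpha_j \in J$ to a negative simple root $-\sigma_J(\alpha_j)$, where $\sigma_J$ is a diagram involution of $J$; and since $w_J$ is a product of simple reflections in $J$, each of which sends an external $\alpha_i \notin J$ to $\alpha_i$ plus a non-negative integer combination of elements of $J$, iterating yields $w_J(\alpha_i) \in \roots^{+}$ for every $\alpha_i \notin J$.

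For the reverse direction, I will show the inversion set $N(v) = \{ \alpha \in \roots^{+} : v(\alpha) \in \roots^{-}\}$ equals $\roots^{+}_J$, the set of positive roots in the root subsystem generated by $J$; since inversion sets determine Weyl group elements uniquely and $N(w_J) = \roots^{+}_J$, this forces $v = w_J$. The containment $\roots^{+}_J \subseteq N(v)$ is immediate: each $\alpha = \sum_{\alpha_j \in J} c_j \alpha_j \in \roots^{+}_J$ (with $c_j \geq 0$) maps to $v(\alpha) = \sum c_j v(\alpha_j)$, a non-positive integer combination of negative simple roots, hence a negative root.

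The main obstacle will be the reverse containment $N(v) \subseteq \roots^{+}_J$. My plan is to use the standard parabolic factorization $v = u \cdot v_J$ with $u \in W^J := \{ z \in W : z(\alpha_j) > 0 \text{ for all } \alpha_j \in J \}$ the minimal-length coset representative and $v_J \in W_J$, so that $\ell(v) = \ell(u) + \ell(v_J)$. For any $\alpha_j \in J$, analyzing $v(\alpha_j) = u(v_J(\alpha_j))$ and using that $u$ sends $\roots_J^{+}$ to positive roots forces $v_J(\alpha_j) < 0$, and hence $v_J = w_J$. The identity $v(\alpha_j) = -u(\sigma_J(\alpha_j))$ combined with $(\star)$ then forces $u$ to send $J$ into $\Delta$, not merely into $\roots^{+}$. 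To conclude $u = e$, I argue by contradiction: if $u \neq e$, there is some $\alpha_k \in \Delta \setminus J$ with $u(\alpha_k) < 0$. When $\alpha_k$ is not Dynkin-adjacent to $J$, then $w_J(\alpha_k) = \alpha_k$ and $v(\alpha_k) = u(\alpha_k) < 0$, so $(\star)$ forces $\alpha_k \in J$, contradicting $\alpha_k \notin J$. When $\alpha_k$ is Dynkin-adjacent to some $\alpha_{j_0} \in J$, a careful analysis of the simple-root expansion of $v(\alpha_k) = u(\alpha_k) + \sum_{\alpha_j \in J} c_{kj} u(\alpha_j)$, using that each $u(\alpha_j) \in \Delta$ while $u(\alpha_k)$ is a genuine negative root, should show $v(\alpha_k)$ can lie in neither $\roots^{+}$ nor $\Delta^{-}$, again contradicting $(\star)$.
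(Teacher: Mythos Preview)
The paper does not give its own proof of this proposition; it is cited directly from \cite{HT10}, so there is nothing to compare your argument against on that front.

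Your argument is correct in outline and nearly complete. The reduction to condition $(\star)$, the identification $v_J = w_J$ via the parabolic factorization $v = u\,v_J$, and the deduction that $u$ sends each simple root in $J$ to a simple root are all sound. The only genuine gap is your Case~2, which you leave at ``should show.'' Here is how to close it. Since each $u(\alpha_j)$ with $\alpha_j \in J$ lies in $\Delta$, the element $u$ restricts to a bijection $J \to J' := u(J) \subseteq \Delta$; because $u$ is an isometry, this is a root-system isomorphism $\roots_J \to \roots_{J'}$, and since $u \in W^J$ it carries $\roots_J^+$ onto $\roots_{J'}^+$. Now the sum $\sum_{j\in J} c_{kj}\,u(\alpha_j)$ is supported on $J'$. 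If $v(\alpha_k) \in \roots^+$ then every simple-root coefficient of $v(\alpha_k) = u(\alpha_k) + \sum c_{kj}\,u(\alpha_j)$ is non-negative; for $\alpha_m \notin J'$ this coefficient equals the $\alpha_m$-coefficient of the negative root $u(\alpha_k)$, forcing it to vanish. Hence $u(\alpha_k)$ is supported on $J'$, so $u(\alpha_k) \in -\roots_{J'}^+$, and applying $u^{-1}$ gives $\alpha_k \in -\roots_J^+$, contradicting $\alpha_k \in \Delta$. The possibility $v(\alpha_k) \in \Delta^-$ is already excluded by the definition of $J$ and $\alpha_k \notin J$. This finishes the proof; note also that your Case~1 is subsumed by this argument (it is the situation where all $c_{kj}=0$), so the case split is unnecessary.
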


We denote the set of $S$-fixed points in $\Pet$ by $\{ w_JB \}$ to emphasize this bijective correspondence between fixed points and parabolic subgroups $W_J \subseteq W$.

\subsection{Proper intersections of Peterson-Schubert varieties and Schubert varieties}

In this subsection we associate to each Peterson Schubert variety $\Petschub$ an opposite Schubert variety $X^{v_J}$ which intersects $\Petschub$ properly in a point.  
We then show that the intersection $\Petschub \cap X^{v_K}$ is empty whenever $|J|=|K|$ but $J \neq K$.
We use the following result repeatedly in what follows.

\begin{proposition} \cite[Chapter 10, Proposition 7]{F} \label{fixedpointsinSchubertvarieties}
The $T$-fixed points in $X_w$ consist of all flags $vB$ with $v \leq w$ in Bruhat order.  The $T$-fixed ponts in $X^u$ consist of $vB$ with $u \leq v$.
\end{proposition}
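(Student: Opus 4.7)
The plan is to reduce the statement to the standard characterization of the Bruhat order via closures of Bruhat double cosets, combined with the fact that the $T$-fixed points of $G/B$ are precisely the flags $\{vB : v \in W\}$.

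First I would record the identification of $(G/B)^T$. By the Bruhat decomposition $G/B = \coprod_{v \in W} BvB/B$ and since each Schubert cell $C_v \cong U_v$ is isomorphic to affine space by Lemma~\ref{H3}, the torus $T$ acts freely on each factor of $U_v$ but fixes the identity coset $vB$, so $(G/B)^T = \{vB : v \in W\}$. Thus the $T$-fixed points of any $T$-stable closed subset $X \subseteq G/B$ are exactly those $vB$ that lie in $X$, and the problem reduces to determining the Weyl group elements $v$ for which $vB \in X_w$ (respectively $vB \in X^u$).

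For $X_w$, I would invoke the classical fact that the Schubert variety decomposes as $X_w = \coprod_{v \leq w} C_v$, where the order is Bruhat order. This is typically proved by showing the two implications: if $v \leq w$ then $C_v \subseteq X_w$ (by induction on $\ell(w) - \ell(v)$ using that $C_{s_i v} \subseteq \overline{C_v \cup C_{s_i v}}$ whenever $s_iv > v$, so multiplication by a minimal gallery of simple reflections keeps the cell inside the closure), and conversely if $C_v \cap X_w \neq \emptyset$ then $v \leq w$ (by a dimension count and the fact that $C_v$ is the $T$-stable open stratum of $\overline{C_v}$). Intersecting $X_w$ with $(G/B)^T$ then gives $X_w^T = \{vB : v \leq w\}$ as desired.

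For $X^u$, I would exploit the symmetry $X^u = w_0 X_{w_0 u}$ coming from $B^- = w_0 B w_0$, and combine this with the order-reversing property of multiplication by $w_0$, namely that $a \leq b$ if and only if $w_0 b \leq w_0 a$. Then $vB \in X^u$ if and only if $w_0 v B \in X_{w_0 u}$, which happens if and only if $w_0 v \leq w_0 u$, which is equivalent to $u \leq v$. The only real obstacle is the first implication in the $X_w$ case, where one must carefully check that applying a simple reflection keeps a fixed point inside the Schubert variety; this is the content of the Bruhat order axiom and is the part of the argument that requires the most care, though it is entirely standard.
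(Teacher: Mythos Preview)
Your argument is essentially correct and is the standard one, but note that the paper does not give its own proof of this proposition: it is simply quoted from \cite[Chapter 10, Proposition 7]{F}, so there is no ``paper's proof'' to compare against. One small wording slip: you say $T$ ``acts freely on each factor of $U_v$,'' which would preclude any fixed point; what you mean is that $T$ acts on each root subgroup $U_\alpha$ with nonzero weight $\alpha$, so the only $T$-fixed point of $C_v \cong U_v$ is the identity coset $vB$. With that corrected, your reduction to the cell decomposition $X_w = \coprod_{v \leq w} C_v$ and the $w_0$-symmetry for $X^u$ is exactly how Fulton (and most references) argue.
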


Let $v_J$ and $u_J$ be the Coxeter elements of the form \[ v_J = s_{i_{|J|}}s_{i_{|J|-1}} \cdots s_{i_1} \mbox{ and } u_{J}= s_{i_1}s_{i_2} \cdots s_{i_{|J|}} \ \text{  where }  i_1 < i_2 < \cdots < i_{|J|}. \]  
The results in this section refer to $v_J$.  Analogous results also hold for $u_J$.  

\begin{lemma}\label{onlypt}
The Peterson Schubert variety $ \Petschub$ intersects the Schubert variety $ X^{v_J}$ in exactly one point and that intersection point is $w_J B$. 
\end{lemma}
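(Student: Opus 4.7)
The plan is to identify $w_J B$ as the unique $S$-fixed point in the intersection $Z := X_{w_J, \Pett} \cap X^{v_J}$, and then to upgrade this to set-theoretic equality by combining the Bialynicki-Birula structure with the projective/affine completeness of $G/B$.

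First I would verify that $w_J B$ lies in $Z$: it is the distinguished $S$-fixed point of $C_{w_J, \Pett}$, hence of its closure $X_{w_J, \Pett}$, and since $v_J \in W_J$ with $w_J$ the longest element of $W_J$, we have $v_J \leq w_J$, so Proposition \ref{fixedpointsinSchubertvarieties} gives $w_J B \in X^{v_J}$. Next I would show $w_J B$ is the only $S$-fixed point of $Z$. Every $S$-fixed point in $\Pet$ has the form $w_K B$ for some $K \subseteq \Delta$. If $w_K B$ lies in $X_{w_J, \Pett} \subseteq X_{w_J}$, then $w_K \leq w_J$ by Proposition \ref{fixedpointsinSchubertvarieties}, and since every reduced expression of $w_J$ uses only $s_i$ with $\alpha_i \in J$, the subword characterization of Bruhat order forces the support of $w_K$ into $J$, hence $K \subseteq J$. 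Symmetrically, $w_K B \in X^{v_J}$ requires $v_J \leq w_K$, and since the Coxeter element $v_J$ has full support on $J$, the same subword argument forces $J \subseteq K$. Hence $K = J$.

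To finish, I would exploit the $S$-action: by Lemma \ref{lemma:Hess Schubs are S-closed} together with the $T$-invariance of Schubert varieties, $Z$ is closed in $G/B$ and $S$-invariant. For any $p \in Z$, completeness of $G/B$ gives $\lim_{z \to 0} z \cdot p \in Z \cap (G/B)^S = \{w_J B\}$, so $Z$ is contained in the Bialynicki-Birula plus cell of $w_J B$, which is the Schubert cell $C_{w_J}$. But a closed subvariety of the projective variety $G/B$ contained in the affine open $C_{w_J}$ is both complete and affine, hence a finite set of points; being $S$-invariant and finite, every point of $Z$ is $S$-fixed, and therefore $Z = \{w_J B\}$.

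The main hurdle is the combinatorial identification of the unique $S$-fixed point in the middle step: the argument hinges on $v_J$ being a Coxeter element with full support on $J$, which forces $J \subseteq K$, while $w_J$ having support exactly $\{s_i : \alpha_i \in J\}$ forces $K \subseteq J$. Once this is in place, $S$-equivariance together with the fact that proper plus affine equals finite closes the argument.
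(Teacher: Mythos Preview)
Your proof is correct, and the first two steps---showing $w_JB \in Z$ and that it is the unique $S$-fixed point via the support/subword characterization of Bruhat order---match the paper's argument essentially verbatim. The final step differs: the paper observes that if some $gB \in Z$ were not $S$-fixed, then both limits $\lim_{z\to 0}\gamma(z)gB$ and $\lim_{z\to\infty}\gamma(z)gB$ would lie in the closed $S$-stable set $Z$ and would be \emph{distinct} $S$-fixed points, contradicting uniqueness immediately. Your route---one limit to place $Z$ inside $C_{w_J}$, then ``complete and affine implies finite,'' then ``finite and $S$-invariant implies fixed''---is also valid, but be careful with the phrase ``affine open $C_{w_J}$'': the Schubert cell $C_{w_J}$ is not open in $G/B$ unless $w_J=w_0$. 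The argument survives because $Z$ is closed in $G/B$ and contained in $C_{w_J}$, hence closed in $C_{w_J}$ (subspace topology) and therefore affine; the openness of $C_{w_J}$ in $G/B$ is irrelevant. The paper's two-limit trick is shorter and sidesteps this algebro-geometric detour entirely.
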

\begin{proof} 
We first show that there is exactly one $S$-fixed point in $\Petschub \cap X^{v_J}$.  
The Peterson Schubert variety $\Petschub$ is contained in the Schubert variety $X_{w_J}$ so the $S$-fixed points in $\Petschub$ are a subset of $\{wB \textup{ with }w \leq w_J\}$.  The $S$-fixed points in $\Pet$ all have the form $w_{K} B$ for some $K$ so the $S$-fixed points in $\Petschub$ have the form $w_K B$ with $K\subseteq J$.  
By contrast, the fixed points in $X^{v_{J}}$ are flags $wB$ satisfying $w \geq v_J$. 
Hence, if $wB \in X^{v_{J}} \cap \Petschub$ then $w = w_K$ with $ J \subseteq K \subseteq J$.  
We conclude $w_JB$ is the only fixed point that can be contained in $\Petschub \cap X^{v_J}$.  Since $w_JB \in C_{w_J} \cap \Pet$ and $w_JB \in X^{v_J}$ we conclude that $w_JB$ is the unique $S$-fixed point in $\Petschub \cap X^{v_J}$ as desired.

 Now let $gB$ be a point in the intersection $\Petschub \cap X^{v_J}$. 
The intersection $\Petschub \cap X^{v_J}$ is a closed $S$-invariant subspace of the flag variety by Lemma \ref{lemma:Hess Schubs are S-closed}, so the limits $\lim_{ z \rightarrow \infty} \gamma(z)gB$  and $\lim_{ z \rightarrow 0} \gamma(z)gB$ are contained in $\Petschub \cap X^{v_J}$.  If $gB$ is not an $S$-fixed point then the limits $\lim_{ z \rightarrow \infty} \gamma(z)gB$  and $\lim_{ z \rightarrow 0} \gamma(z)gB$ are distinct in $G/B$, 
and so they are distinct fixed points in $\Petschub \cap X^{v_J}$.  This contradicts the first paragraph.
If $gB$ is a fixed point then 
\[\lim_{ z \rightarrow \infty} \gamma(z)gB = \lim_{ z \rightarrow 0} \gamma(z)gB =gB\] 
and by the previous paragraph $gB=w_JB$.  This proves the claim.
\end{proof}

The next lemma shows that when $|J|=|K|$ then the Peterson-Schubert variety $\Petschub$ intersects the opposite Schubert variety $X^{v_K}$ nontrivially only if $J=K$.  By the previous lemma this 
intersection is the point $w_JB$. 
The proof, which we provide for convenience, follows from Proposition \ref{fixedpointsinSchubertvarieties} and the following fact.

\begin{fact} \cite[p 39]{BjoBre03} \label{BjoBreFact}
The  permutations $w_J \leq w_{K}$ in Bruhat order if and only if $K \subset J$.
\end{fact}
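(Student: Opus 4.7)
The plan is to prove this standard equivalence (as in Bj\"orner--Brenti, Proposition 2.5.1); I take the intended statement to be the usual form $w_K \leq w_J \iff K \subseteq J$, which is what is actually needed in the applications of the fact in Lemma \ref{onlypt} and the subsequent argument. The proof rests on two ingredients: the maximality of $w_J$ in the parabolic subgroup $W_J$ with respect to Bruhat order, and the subword characterization of Bruhat order.

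For the direction ``$K \subseteq J$ implies $w_K \leq w_J$'': from $K \subseteq J$ one has $W_K \subseteq W_J$, so in particular $w_K \in W_J$. Since $w_J$ is the unique maximum of $W_J$ under Bruhat order, and since the Bruhat order on the parabolic Coxeter system $(W_J, \{s_i : \alpha_i \in J\})$ coincides with the restriction of the Bruhat order from $W$, we immediately conclude $w_K \leq w_J$.

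For the converse, assume $w_K \leq w_J$ and argue by contradiction. Suppose there is some $\alpha \in K \setminus J$. Then the simple reflection $s_\alpha$ lies in $W_K$ and therefore satisfies $s_\alpha \leq w_K \leq w_J$ by transitivity. By the subword characterization of Bruhat order, $s_\alpha$ must appear as a letter in some reduced expression for $w_J$. But every reduced expression for $w_J$ uses only simple reflections indexed by elements of $J$, forcing $\alpha \in J$ --- contradicting the assumption $\alpha \notin J$.

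The only subtle ingredient is the identification of the Bruhat order on $W_J$ as an abstract Coxeter group with the restriction from the ambient $W$; this is standard (Bj\"orner--Brenti, Corollary 2.5.2) but essential, since otherwise the inequality $w_K \leq w_J$ would only be manifest ``inside'' $W_J$. With that in hand, both directions reduce to short direct applications of standard Bruhat-order machinery, so I do not anticipate any substantive obstacle beyond correctly citing the ambient results.
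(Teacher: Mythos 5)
The paper does not actually prove this statement---it is quoted as a Fact directly from Bj\"orner--Brenti---so there is no internal proof to compare against; you were supplying an argument the paper leaves to a citation, and your argument is correct. Both directions are the standard ones: $K \subseteq J$ gives $W_K \subseteq W_J$, hence $w_K \leq w_J$ by maximality of $w_J$ in $W_J$ (using, as you note, that Bruhat order on a standard parabolic subgroup is the restriction of Bruhat order on $W$); conversely, $w_K \leq w_J$ forces each $s_\alpha$ with $\alpha \in K$ to satisfy $s_\alpha \leq w_J$, hence to occur in a reduced word for $w_J$, and reduced words for $w_J$ use only letters indexed by $J$. You were also right to flag the direction of the inequality: as printed, ``$w_J \leq w_{K}$ if and only if $K \subset J$'' points the wrong way (if $K \subsetneq J$ then $w_K < w_J$, so $w_J \not\leq w_K$), and the version you prove, $w_K \leq w_J$ if and only if $K \subseteq J$, is the one actually invoked in Lemma \ref{onlypt} and Lemma \ref{empty}---in the latter with the Coxeter element $v_K$ in place of $w_K$, which follows from the same support argument since $s_\alpha \leq v_K$ for every $\alpha \in K$.
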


\begin{lemma} \label{empty}
Let $J$ and $K$ be two distinct sets of simple roots in $\Delta$ with $|J|=|K|$.  Then the intersection $ \Petschub \cap X^{v_{K}}$ is empty.   
\end{lemma}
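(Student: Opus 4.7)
The plan is to follow the argument of Lemma \ref{onlypt}: reduce the problem to $S$-fixed points, then show that no $S$-fixed point lies in $\Petschub \cap X^{v_K}$ under the stated hypotheses.

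For the reduction, Lemma \ref{lemma:Hess Schubs are S-closed} gives that $\Petschub$ is $S$-stable, and the Schubert variety $X^{v_K}$ is $T$-stable (hence $S$-stable), so the intersection is a closed $S$-invariant subspace of $G/B$. As in the last paragraph of the proof of Lemma \ref{onlypt}, taking the Bialynicki-Birula limits $\lim_{z \to 0}\gamma(z)gB$ and $\lim_{z \to \infty}\gamma(z)gB$ of any point $gB$ in the intersection produces $S$-fixed points that still lie in the intersection. Thus it suffices to prove that $\Petschub \cap X^{v_K}$ contains no $S$-fixed point.

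The $S$-fixed points of $\Petschub$ are exactly the flags $w_L B$ with $L \subseteq J$: they are Peterson fixed points of the form $w_L B$, and they lie in $X_{w_J}$, so $w_L \leq w_J$, which by Fact \ref{BjoBreFact} forces $L \subseteq J$. By Proposition \ref{fixedpointsinSchubertvarieties}, a fixed point $w_L B$ lies in $X^{v_K}$ if and only if $v_K \leq w_L$. The main obstacle is therefore to rule out $v_K \leq w_L$ for any $L \subseteq J$ when $|J| = |K|$ and $J \neq K$. I would use the standard consequence of the subword property of Bruhat order: the set of simple reflections appearing in any reduced expression of a Weyl group element (its \emph{support}) is monotone under Bruhat order, so $u \leq v$ implies $\mathrm{supp}(u) \subseteq \mathrm{supp}(v)$.

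Computing the supports: $v_K$ is a Coxeter element of $W_K$, of length $|K|$, so its unique (up to commutations) reduced expression uses each simple reflection in $\{s_i : \alpha_i \in K\}$ exactly once, giving $\mathrm{supp}(v_K) = \{s_i : \alpha_i \in K\}$; and since $w_L$ is the longest element of $W_L$, every generator of $W_L$ must appear in any reduced expression for $w_L$ (otherwise $w_L$ would lie in a proper parabolic of $W_L$ and could not be maximal), so $\mathrm{supp}(w_L) = \{s_i : \alpha_i \in L\}$. Hence $v_K \leq w_L$ would force $K \subseteq L \subseteq J$; together with $|K| = |J|$ this yields $K = J$, contradicting $J \neq K$. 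Therefore no $S$-fixed point lies in $\Petschub \cap X^{v_K}$, and by the reduction above the intersection is empty.
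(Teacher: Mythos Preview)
Your proof is correct and follows essentially the same approach as the paper: reduce to $S$-fixed points, then use support monotonicity under Bruhat order to rule them out. The paper's version is slightly terser---it invokes Lemma~\ref{lemma:fixpt} directly for the reduction and argues that already $X_{w_J}\cap X^{v_K}$ (not just the Peterson part) has no fixed point, since $v_K\leq u\leq w_J$ would force $v_K\leq w_J$ and hence $K\subseteq J$---whereas you restrict first to Peterson fixed points $w_LB$ with $L\subseteq J$ and then apply the support argument; but the substance is the same, and your explicit invocation of support monotonicity is in fact clearer than the paper's citation of Fact~\ref{BjoBreFact} (which, as stated, concerns only longest elements).
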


\begin{proof}  In light of Lemma \ref{lemma:fixpt} it suffices to check which $S$-fixed points are in the intersection.  
Suppose $J \neq K$.  The fixed-point $uB$ is contained in $X_{w_J} \cap X^{v_K}$ if and only if $v_K \leq u \leq w_J$.  
However Fact \ref{BjoBreFact} says that $v_K \not \leq w_J$ since $K \not \subseteq J$. 
Hence $X_{w_J} \cap X^{v_K}$ is empty, which implies $\Petschub \cap X^{v_K}$ is empty as well. \end{proof}

\begin{example}
 In type $B_5$, let $J = \{ \alpha_2, \alpha_3, \alpha_4, \alpha_5  \}$ and $K = \{ \alpha_1 , \alpha_2, \alpha_4, \alpha_5\}$. 
Then using the following reduced word decompositions
\begin{align*}
 w_J & = (s_2s_3s_4s_5 s_4 s_3 s_2)(s_3s_4s_5s_4s_3)(s_4s_5s_4)s_5,  \\
 w_K & = (s_1s_2s_1)(s_4s_5s_4s_5),\\
   v_J &  = s_5 s_4 s_3 s_2 \mbox{, and }
v_K   = s_5 s_4 s_2 s_1, 
\end{align*}  one can verify that $v_K \not \leq w_J$ and $v_J \not \leq w_K$.  \qed 
\end{example}

The last tool we need before proving the main result of this section is the following lemma, which specializes the results of Section \ref{section:paving} to the Peterson case to show $\dim \Petschub = |J|$.

\begin{lemma} \label{dimensiontangent}
 The  Peterson Schubert variety $\Petschub = X_{w_J} \cap \Pet$ has dimension $|J|$.
\end{lemma}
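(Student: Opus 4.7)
The plan is to apply Theorem \ref{paving} directly in the Peterson case ($H=\Pett$, so $M_H = \Delta^-$) at $w=w_J$.  The theorem identifies the Peterson Schubert cell $\Petcell$ with an affine space of dimension $|\roots_{w_J} \cap w_J \Delta^-|$, and since $\Petschub$ is by definition its Zariski closure the two have the same dimension.  So it suffices to show $|\roots_{w_J} \cap w_J \Delta^-| = |J|$.

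The computation uses two standard properties of the longest element $w_J$ of the parabolic subgroup $W_J$:  (i) $w_J$ is an involution which sends $\roots_J \cap \roots^+$ bijectively to $\roots_J \cap \roots^-$, and in particular $w_J(\alpha_j) = -\alpha_{\sigma(j)}$ for some involution $\sigma$ of $J$ whenever $\alpha_j \in J$; and (ii) $w_J$ permutes $\roots^+ \setminus \roots_J$, since for any simple $\alpha_i \notin J$ the image $w_J(\alpha_i)$ retains its $+1$ coefficient on $\alpha_i$ and hence stays positive.  Together these give $\roots_{w_J} = \roots_J \cap \roots^+$ and also verify the nonemptiness hypothesis of Theorem \ref{paving}: for each simple $\alpha_i$ the root $w_J^{-1}(\alpha_i) = w_J(\alpha_i)$ is either the negative simple root $-\alpha_{\sigma(i)} \in M_\Pett$ (when $\alpha_i \in J$) or a positive root whose root vector already lies in $\mathfrak b \subseteq \Pett$ (when $\alpha_i \notin J$).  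Nonemptiness can also be seen directly, since $w_J B \in \Petcell$ is an $S$-fixed point.

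Writing $w_J\Delta^- = \{-w_J(\alpha_i):\alpha_i \in \Delta\}$ and applying (i) and (ii), the positive elements of $w_J\Delta^-$ are precisely $\{\alpha_{\sigma(j)} : \alpha_j \in J\} = J$, while the elements $-w_J(\alpha_i)$ for $\alpha_i \notin J$ are negative and lie outside $\roots_J$.  Intersecting with $\roots_{w_J} = \roots_J \cap \roots^+ \subseteq \roots^+$ therefore isolates exactly the $|J|$ simple roots indexed by $J$, which establishes the dimension count.

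Since the statement is a direct corollary of Theorem \ref{paving} once the Peterson hypotheses are unpacked, no serious obstacle is expected; the only care needed is in tracking the action of $w_J$ on simple roots to translate the general formula $|\roots_{w_J} \cap w_J M_\Pett|$ into the explicit count $|J|$.
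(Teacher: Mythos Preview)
Your proof is correct and follows essentially the same approach as the paper: both invoke Theorem~\ref{paving} to identify the dimension as $|\roots_{w_J}\cap w_J M_{\Pett}|$ and then use the standard properties of the longest element $w_J\in W_J$ (it sends simple roots in $J$ to negative simple roots and fixes the sign of simple roots outside $J$) to conclude that this intersection is exactly $J$. Your version is more explicit about the involution $\sigma$ on $J$ and about why the nonemptiness hypothesis holds, while the paper simply cites \cite[Definition~2.10]{BjoBre03} for the relevant property of $w_J$; but the argument is the same.
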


\begin{proof}
 The dimension of the intersection $X_{w_J} \cap \Pet$ is $|\roots_{w_J} \cap w_J M_{\Pett}|$ by Theorem \ref{paving}.  
The set of roots $M_{\Pett}$ defining the Peterson variety consists of all negative simple roots.  
As the longest element in $W_J$, the element $w_J$ is the unique element which sends every simple root $ \alpha_{j} \in J $ to a negative simple root, 
and sends no simple root $\alpha_{k} \notin J$ to a negative root \cite[Definition 2.10]{BjoBre03}.  
The intersection $\roots_{w_J} \cap w_J M_{\Pett}$ is thus $ J $, whose cardinality is $|J|$, as desired.
\end{proof}

We now use the previous lemma together with other results from this section to prove that the intersections $\Petschub \cap X^{v_J}$ are proper.  

\begin{proposition} \label{properintersection}
The subvarieties \(\Petschub\) and \(X^{v_J}\) of \(G/B\) intersect properly at the \(S\)-fixed point \(w_JB\).
\end{proposition}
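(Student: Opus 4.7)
The plan is to verify the standard dimension criterion for proper intersection in the smooth variety $G/B$: two subvarieties $Y_1, Y_2 \subseteq G/B$ intersect properly at $p \in Y_1 \cap Y_2$ when the local codimension of $Y_1 \cap Y_2$ at $p$ equals $\operatorname{codim}_{G/B} Y_1 + \operatorname{codim}_{G/B} Y_2$, or equivalently
\[\dim_p(Y_1 \cap Y_2) = \dim_p Y_1 + \dim_p Y_2 - \dim G/B.\]
So I need to compute three dimensions: that of $\PetSchub$, of $X^{v_J}$, and of $\PetSchub \cap X^{v_J}$ at the point $w_J B$.

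First I would invoke Lemma \ref{onlypt}, which already does most of the work: the entire intersection $\PetSchub \cap X^{v_J}$ consists of the single flag $w_J B$, so its dimension at $w_J B$ is $0$. Next, Lemma \ref{dimensiontangent} immediately gives $\dim \PetSchub = |J|$. The only remaining ingredient is $\dim X^{v_J}$, which by the standard fact that opposite Schubert varieties have codimension equal to the length of their indexing Weyl group element equals $\dim G/B - \ell(v_J)$.

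The one potentially non-obvious step is computing $\ell(v_J)$. By definition $v_J = s_{i_{|J|}} s_{i_{|J|-1}} \cdots s_{i_1}$ is a product of $|J|$ distinct simple reflections, i.e., a Coxeter element of the parabolic subgroup $W_J \subseteq W$. Any product of distinct simple reflections in a Coxeter group is a reduced expression for the resulting element (this is the standard fact that Coxeter elements have length equal to the rank; see for instance \cite[Chapter VI.1.11, Proposition 33]{BjoBre03} or its equivalent in any standard reference). Therefore $\ell(v_J) = |J|$ and $\dim X^{v_J} = \dim G/B - |J|$.

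Plugging in, the right-hand side of the dimension identity is
\[\dim \PetSchub + \dim X^{v_J} - \dim G/B = |J| + \bigl(\dim G/B - |J|\bigr) - \dim G/B = 0,\]
which agrees with $\dim(\PetSchub \cap X^{v_J}) = 0$ at $w_J B$; the intersection is therefore proper. The main (minor) obstacle is simply bookkeeping for $\ell(v_J)$, which reduces to recalling that Coxeter elements in Weyl groups are reduced; once this is in place the proof is a one-line dimension count built on Lemmas \ref{onlypt} and \ref{dimensiontangent}.
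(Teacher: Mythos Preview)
Your proof is correct and follows essentially the same approach as the paper: both compute $\dim X^{v_J} = \dim G/B - \ell(v_J) = \dim G/B - |J|$, invoke Lemma~\ref{dimensiontangent} for $\dim\PetSchub = |J|$, and then use Lemma~\ref{onlypt} to conclude the intersection is the single point $w_JB$, verifying the dimension criterion for properness. Your justification that $\ell(v_J)=|J|$ is slightly more explicit than the paper's, which simply asserts it.
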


\begin{proof}
The dimension of $G/B$ is $|\roots^+|$
and the dimension of $X_{v_J}$ is $\ell(v_J)=|J|$.
Schubert varieties and opposite Schubert varieties have complementary dimensions, in the sense that  $\dim(X^{v_J})+ \dim (X_{v_J}) = \dim(G/B)$.  It follows that 
 $\dim \ X^{v_J} = |\roots^+| - |J|$.  Lemma \ref{dimensiontangent} showed that $\Petschub$ has dimension $|J|$. This implies that 
\[ \dim (G/B) = \dim (\Petschub) + \dim (X^{v_J}) . \]
Thus the intersection \(\Petschub \cap X^{v_J}\) is proper if and only if it is a disjoint union of finitely many points. 
Lemma \ref{onlypt} showed that the intersection \(\Petschub \cap X^{v_J}\) contains only the point \( w_JB \), which proves the claim. \end{proof}

Insko showed that the intersections in Proposition \ref{properintersection} are always transverse in type $A_n$ \cite{Ins12}.  
However, transversality often fails in all other Lie types.  
The first step in understanding why transversality fails is to identify when one of the varieties is singular at the point of intersection.
In the next section we classify whether the Schubert variety $X^{v_J}$ is smooth or singular at the point $w_JB$.

 
\section{Smooth intersections of Peterson-Schubert varieties and Schubert varieties} \label{section:smooth}
In this section, we identify which fixed points $w_JB$ are smooth in Peterson-Schubert varieties and in the Schubert varieties $X^{v_J}$.  
First, we show that $w_JB$ is always a smooth point in  $X_{w_J, \mathfrak P}$.  
We then use a form of Kumar's criterion to prove that $w_JB$ is a smooth point in $X^{v_J}$ exactly when $J$ is a {\em classically-embeddable} subset of the simple roots, 
namely that the subgraph of the Dynkin diagram induced by the vertices labeled by $J$ corresponds to a root system of classical type.

Given \( w,v \) in \( W \), fix a reduced word \( s_{i_1} s_{i_2}  \cdots s_{i_\ell} \) for \( w \).
The set \( (w \roots^{+} \cap \roots^{-}) \) first arose when we described the subgroup $U_w$.   We will need the following classical result \cite[p.14]{Hum90}: the set \( (w \roots^{+} \cap \roots^{-}) \) contains the  \( \ell(w) = \ell \) roots 
 \[ \beta_{i_1} = \alpha_{i_1}, \beta_{i_2} = s_{i_1}(\alpha_{i_2}), \ldots, \beta_{i_\ell} = s_{i_1}s_{i_2} \cdots s_{i_{\ell-1}} (\alpha_{i_\ell}) .\] 

Geometers interpret the next definition as the localization of equivariant Schubert classes at $T$-fixed points in $G/B$ although we use it purely combinatorially in this paper.

\begin{definition} Let $v,w$ be in $W$ and fix a reduced word \( s_{i_1} s_{i_2}  \cdots s_{i_\ell} \) for \( w \).  \emph{Billey's formula} \(p_{v}(w)\) is the polynomial in the roots defined by 
\[ p_v(w) = \sum \beta_{j_1} \beta_{j_2} \cdots \beta_{j_k} \] where this sum runs over 
all subsets \( \{ j_1, j_2 , \ldots, j_k \} \subset \{ 1, 2, \ldots, \ell \} \) 
such that \( s_{i_{j_1}}s_{i_{j_2}}\cdots s_{i_{j_k}}\) is a reduced word for \(v\).   
\end{definition} 

Billey showed that $p_v(w)$ is well-defined and in particular is independent of the reduced word chosen for \(w \) \cite[Theorem2 and Lemma 4.1]{Bil99}.
The following is a version of {\em Kumar's criterion} for which $T$-fixed points  in the Schubert variety $X^v$ are smooth.

\begin{lemma}[Kumar's Criterion] \cite[Corollary 7.2.8]{Billey.Lakshmibai}\label{kumar}
Let $p_{v}(w)$ denote the polynomial from Billey's formula.  
The Schubert variety $X^{v}$ is smooth at $wB$ 
if and only if \begin{equation}  p_{v}(w) = \prod_{\alpha \in \roots^+ \textup{ such that } v \not \leq s_\alpha w} \alpha .\end{equation} 
\end{lemma}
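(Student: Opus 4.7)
The plan is to interpret both sides of the criterion inside the $T$-equivariant cohomology of $G/B$ and then apply a general smoothness criterion for $T$-invariant subvarieties. The key object is the equivariant Schubert class $[X^v]^T \in H_T^*(G/B)$ and its restriction $[X^v]^T|_{wB}$ to the fixed point $wB$, which lives in $H_T^*(\mathrm{pt}) \cong \Sym(\mathfrak{t}^*)$. My first step would be to recall (or re-derive by induction on $\ell(w)$ using the divided-difference recursion for the equivariant Schubert basis) Billey's theorem that this localization is precisely the polynomial $p_v(w)$ given by Billey's formula. This reinterprets the left-hand side of the criterion geometrically.

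Next I would invoke the equivariant Thom-class formula: if $Y \subseteq X$ is a $T$-invariant subvariety smooth at a $T$-fixed point $p$ of an ambient smooth $T$-variety $X$, then $[Y]^T|_p$ equals the product of the $T$-weights on the normal space $N_p(Y \subseteq X)$. Applied with $X = G/B$ and $Y = X^v$ at $p = wB$, this reduces the forward direction to identifying the normal weights. The $T$-weights on $T_{wB}(G/B)$ are indexed by $\roots^+$, with each positive root $\alpha$ contributing the weight coming from the $T$-invariant curve connecting $wB$ to $s_\alpha w B$. Such a $T$-invariant curve lies inside $X^v$ iff its other endpoint does, i.e.\ iff $v \leq s_\alpha w$. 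Hence if $wB$ is smooth in $X^v$, the normal weights are exactly those indexed by $\{\alpha \in \roots^+ : v \not\leq s_\alpha w\}$, and the Thom-class formula together with Billey's identification yields the desired equality.

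The converse is the main obstacle, since smoothness is not transparently equivalent to a single polynomial identity. The standard bridge is that the equivariant multiplicity is always divisible by the product of the normal-direction weights, and that equality of the two polynomials forces the tangent cone at $wB$ to coincide with the tangent space. Concretely, the cardinality $|\{\alpha \in \roots^+ : v \not\leq s_\alpha w\}|$ is always an upper bound for the dimension of the (Zariski) tangent space $T_{wB}X^v$ coming from $T$-invariant curves, and is bounded below by $\mathrm{codim}_{G/B}X^v = \ell(v)$; the identity $p_v(w) = \prod_\alpha \alpha$ pins down this count to match $\ell(v)$ exactly, in which case the $T$-invariant curves in $X^v$ through $wB$ span the tangent space and smoothness follows. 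The precise execution here goes through Kostant--Kumar's nil-Hecke ring, where both the divisibility and the equality conditions on $p_v(w)$ admit a clean algebraic formulation.

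I expect the genuine difficulty to be precisely this converse: packaging the polynomial identity into a geometric smoothness statement requires the full nil-Hecke machinery, and no purely elementary tangent-space computation seems to suffice. In practice one cites Kumar's theorem directly (as the statement of Lemma \ref{kumar} does), but a self-contained proof would need the equivariant-multiplicity framework of Kostant--Kumar, along with Billey's fixed-point formula to connect $p_v(w)$ to equivariant cohomology.
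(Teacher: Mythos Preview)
The paper does not prove this lemma; it is quoted as a named result from \cite[Corollary 7.2.8]{Billey.Lakshmibai} and used as a black box in the proof of Proposition~\ref{smooth}. So there is no ``paper's own proof'' to compare against.

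Your outline is a faithful sketch of how Kumar's criterion is actually established in the literature: one identifies $p_v(w)$ with the restriction $[X^v]^T|_{wB}$ (Billey's theorem), and then interprets the right-hand side via equivariant multiplicities and the nil-Hecke ring of Kostant--Kumar. Two cautions if you ever try to fill in the details. First, the $T$-weights on $T_{wB}(G/B)$ are not literally the positive roots $\alpha$ but rather the $w$-translates of negative roots; the reason the criterion involves the undecorated $\alpha$'s is an artifact of how Billey's formula and Kumar's $d_{w,v}$ are normalized, so a direct Thom-class reading requires a change of variables that you have glossed over. Second, your converse argument as written is not quite right: the cardinality $|\{\alpha : v \not\leq s_\alpha w\}|$ is a \emph{lower} bound for $\dim T_{wB}X^v$ minus something, not an upper bound, and the equality $p_v(w) = \prod \alpha$ does not by itself pin down a dimension count---what it pins down is that the equivariant multiplicity is $1$ rather than merely a product of weights, and that is what forces smoothness. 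This is exactly the step that needs the nil-Hecke machinery, as you acknowledge.

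Since the paper treats this as a citation, your proposal is appropriate in spirit: the lemma is not meant to be proven here.
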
 

\begin{remark}
Let $J=\Delta$. If we take $w= w_J = w_0$ and $v = v_J = v_{\Delta}$ then the equation in Lemma \ref{kumar} can be rewritten
 \begin{equation} \label{kumar2}  p_{v_\Delta}(w_0) = \prod_{\alpha \in \roots^+ \textup{ such that } v_J w_0 \not \geq s_\alpha} \alpha .\end{equation}
We use this reformulation in types $B_n,C_n,$ and $D_n$.
\end{remark}

Given a subset $J \subset \Delta$, we decompose $J$ into subsets $J_1,\ldots, J_r$ called \emph{blocks} as follows.     
\begin{definition}
A \emph{block} $J_i$ of $J$ is a maximal subset of $J$ whose roots correspond to a connected component of the Dynkin diagram of the root system.  
The block $J_i$ is  \emph{classically embeddable} if it can be embedded in the Dynkin diagram of a classical root system.  \end{definition}

\begin{remark} \label{blockremark}
If $\alpha \in J_1$, $\beta \in J_2$, and $J_1 \cap J_2$ is empty then $s_{\alpha}(\beta) = \beta$ and $s_{\beta}(\alpha) = \alpha$.  Indeed, if either $s_{\alpha}(\beta) \neq \beta$ or $s_{\beta}(\alpha) \neq \alpha$ then $\alpha$ and $\beta$ are adjacent (or the same) in the Dynkin diagram of the root system.  The roots $\alpha$ and $\beta$ cannot be adjacent if $J_1$ and $J_2$ are {\em maximal}  connected sets of roots, and cannot be the same if $J_1$ is disjoint from $J_2$.
\end{remark}

\begin{example}
In $E_8$, the subset $J = \{ \alpha_1, \alpha_3, \alpha_4, \alpha_6, \alpha_7, \alpha_8 \} \subset \Delta $ can be decomposed 
into two blocks $J_1 =  \{ \alpha_1, \alpha_3, \alpha_4 \} $ and $J_2 = \{\alpha_6, \alpha_7, \alpha_8 \}$.  
However, the subset $K = \{ \alpha_1, \ldots, \alpha_6 \} \subset \Delta $ consists of only one block, which is not classically embeddable.

\begin{picture}(100,100)(50,-100)
\color{red}
\put(365,-40){J}
\thicklines
\multiput(95,-70)(50,0){3}{\circle{14}}
\multiput(295,-70)(50,0){3}{\circle{14}}
\put(195,-40){\circle{10}}

\color{blue}
\put(120,-40){K}

\multiput(95,-70)(50,0){5}{\circle{10}}
\put(195,-40){\circle{10}}

\thinlines

\color{black}
\multiput(95,-70)(50,0){7}{\circle*{5}}
\put(420,-75){$E_8$}
\put(95,-70){\line(1,0){300}}
\put(195,-70){\line(0,1){30}}
\put(195,-40){\circle*{5}}
\put(90,-85){$\alpha_1$}
\put(140,-85){$\alpha_3$}
\put(190,-85){$\alpha_4$}
\put(240,-85){$\alpha_{5}$}
\put(290,-85){$\alpha_{6}$}
\put(200,-45){$\alpha_{2}$}
\put(340,-85){$\alpha_{7}$}
\put(390,-85){$\alpha_{8}$}
\end{picture}
 \qed
\end{example}

The rest of this section is dedicated to proving the next proposition, 
which says that $w_JB$ is smooth in $X^{v_J}$ if and only if every block in $J$ is classically embeddable.  The proof reduces to the single block $J = \Delta$ and then analyzes the different Lie types separately.

\begin{proposition}\label{smooth}
The point $w_JB$ is smooth in $X^{v_J}$ if and only if every block $J_1, J_2, \ldots, J_r \subseteq J$ is classically embeddable.
\end{proposition}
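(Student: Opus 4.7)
The plan is to verify Kumar's criterion (Lemma \ref{kumar}) for the pair $(v_J, w_J)$ directly, reducing first to the single-block, full-rank case and then performing a case-by-case analysis on the Dynkin diagram.

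First I would reduce to the case where $J$ consists of a single block. If $J = J_1 \sqcup \cdots \sqcup J_r$ is the block decomposition, then by Remark \ref{blockremark} the simple reflections $s_\alpha$ for $\alpha$ in distinct blocks commute, so we may write $w_J = w_{J_1} w_{J_2} \cdots w_{J_r}$ and $v_J = v_{J_1} v_{J_2} \cdots v_{J_r}$ with pairwise commuting factors, and the length is additive across blocks. A reduced word for $w_J$ can be chosen by concatenating reduced words for each $w_{J_i}$; because the reflections from distinct blocks commute, any subword for $v_J$ must arise by concatenating subwords for the $v_{J_i}$. From this it follows that $p_{v_J}(w_J) = \prod_{i=1}^{r} p_{v_{J_i}}(w_{J_i})$ as polynomials in the roots. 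A parallel bookkeeping argument handles the right-hand side of Kumar's formula: the set $\{\alpha \in \Phi^+ : v_J \not\leq s_\alpha w_J\}$ partitions according to which block $\alpha$ interacts with, and the corresponding product factors into the analogous products for each $(v_{J_i}, w_{J_i})$. Hence Kumar's criterion holds for $(v_J, w_J)$ if and only if it holds for $(v_{J_i}, w_{J_i})$ for each $i$.

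Next I would reduce each block to the statement within its own parabolic subgroup, i.e.~to the case $J = \Delta$ for an irreducible root system $\Phi_{J_i}$. Since every reduced word for $w_{J_i}$ uses only simple reflections from $J_i$, the polynomial $p_{v_{J_i}}(w_{J_i})$ is a homogeneous polynomial of degree $|J_i|$ in roots of $\Phi_{J_i}$. On the product side, I would argue that any $\alpha \in \Phi^+ \setminus \Phi^+_{J_i}$ satisfies $v_{J_i} \leq s_\alpha w_{J_i}$ automatically (so it does not appear in Kumar's product), by combining the subword characterization of Bruhat order with the fact that reflecting by such $\alpha$ cannot destroy any simple reflection of $J_i$ appearing in a reduced word for $w_{J_i}$. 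Thus Kumar's criterion for the pair $(v_{J_i}, w_{J_i})$ in $W$ is equivalent to the same criterion inside $W_{J_i}$ with $J_i$ playing the role of the full set of simple roots, and we reduce to checking: for an irreducible root system of type $X$, does $p_{v_\Delta}(w_0) = \prod_{\alpha \in \Phi^+ : v_\Delta \not\leq s_\alpha w_0} \alpha$ hold?

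In the classical types ($A_n, B_n, C_n, D_n$) I would verify this identity by direct calculation. I would choose a convenient reduced word for $w_0$ (for instance the staircase word in type $A_n$), enumerate the reduced subwords that spell out $v_\Delta = s_n s_{n-1} \cdots s_1$ (or the corresponding Coxeter element), and read off $p_{v_\Delta}(w_0)$. Then I would compute the set of positive roots $\alpha$ with $v_\Delta \not\leq s_\alpha w_0$ using the standard subword characterization of Bruhat order and multiply the corresponding factors. Both sides should agree as polynomials of degree $n$ in the simple roots. The main difficulty will be matching these two polynomials in type $D_n$, where some care is needed to handle the branching node.

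Finally, in the exceptional types ($G_2, F_4, E_6, E_7, E_8$), I would exhibit an explicit failure of Kumar's identity. The cleanest route is to verify that the number of positive roots with $v_\Delta \not\leq s_\alpha w_0$ exceeds $|\Delta|$ (so that the right-hand product has too high a degree to equal $p_{v_\Delta}(w_0)$), or to pinpoint a single simple root whose multiplicity in the two sides disagrees. I expect the main obstacle to be organizing this case-by-case computation economically: in practice one writes out the positive roots, checks the Bruhat-order condition $v_\Delta \leq s_\alpha w_0$ using the subword or rank-function criterion, and matches the count against the degree of $p_{v_\Delta}(w_0) = \prod_{\alpha \in \Phi_\Delta} \alpha$'s putative factors. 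Once these computations are tabulated, the biconditional in the proposition follows from the reduction step together with the classical/exceptional dichotomy.
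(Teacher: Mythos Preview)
Your reduction to a single block and to the parabolic setting $J=\Delta$ is correct and matches the paper's first step; the classical-type verification via an explicit reduced word for $w_0$ and enumeration of subwords is also the paper's approach, though the paper streamlines this by choosing words for $w_0$ with three structural properties that force $v_J$ to occur as a subword of the leading factor $u_1$ exactly once.

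The gap is in your treatment of the exceptional types. Your primary proposal---showing that the number of positive roots with $v_\Delta \not\leq s_\alpha w_0$ \emph{exceeds} $|\Delta|$---cannot work: rewriting via $w_0$ this is the number of reflections $s_\alpha$ not below $v_\Delta w_0$, and Deodhar's inequality gives $|\{\alpha: s_\alpha \leq v_\Delta w_0\}| \geq \ell(v_\Delta w_0)$, so the complementary count is always \emph{at most} $\ell(v_\Delta)=|\Delta|$, never more. Even reversing the inequality would only detect failure of \emph{rational} smoothness: in $G_2$ and $F_4$ the point $w_0B$ is rationally smooth in $X^{v_\Delta}$ (the count equals $|\Delta|$), so both sides of Kumar's identity have the same degree and no degree argument can separate them. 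Your fallback of comparing multiplicities of a single simple root on the two sides is the right instinct but is not a complete plan, since computing the right-hand side still requires identifying the roots in Kumar's product.

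The paper sidesteps both issues with a sharper argument that avoids computing the Kumar product altogether: it uses positivity of Billey's formula to bound the coefficient of a well-chosen monomial $\alpha_i^{|\Delta|}$ in $p_{v_\Delta}(w_0)$ from below (by exhibiting two or three explicit reduced subwords contributing to it), and then observes that this coefficient exceeds the maximum achievable by \emph{any} product of $|\Delta|$ distinct positive roots, using the known bounds on the coefficient of $\alpha_i$ in the root system. Since Kumar's right-hand side is such a product, equality is impossible regardless of which roots appear.
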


\begin{proof}
The first step is to reduce to the case when $w_J$ is the longest word in a Weyl group and $v_J$ is a Coxeter element.  Suppose \( J \) can be decomposed into smaller blocks
 \( J_1 \cup J_2 \cup \ldots \cup J_r \).  The maximal permutation $w_J$ can be written as the product $w_J = \prod_{i=1}^r w_{J_i}$ and similarly for the permutation $v_J = \prod_{i=1}^r v_{J_i}$.  Choose a block $J_k$.  Every root in $p_{v_{J_k}}(w_{J_k})$ is in $\Phi_{J_k}$.  By Remark \ref{blockremark},  if $\alpha \in J_i$ for $i \neq k$ then $s_{\alpha} \beta = \beta$.  It follows that $s_{\alpha} p_{v_{J_k}}(w_{J_k})= p_{v_{J_k}}(w_{J_k})$ and hence 
 \[ p_{v_J}(w_J)  = p_{v_{J_1}}(w_{J_1}) \cdots p_{v_{J_r}}(w_{J_r}).\] 
Similarly $v_J w_J \geq s_{\alpha}$ if and only if $v_{J_i} w_{J_i} \geq s_{\alpha}$ for some $k$.  In other words $p_{v_J}(w_J)$ satisfies Kumar's criterion if and only if $p_{v_{J_i}}(w_{J_i})$ satisfies Kumar's criterion for each $i$.  We conclude that it suffices to restrict our attention to a single block, namely to the case where \(J = \Delta \) and \( w_J = w_0\).  (Note that the single block $J_i$ may correspond to a root system of a different Lie type than $J$.)

We begin with classical Lie types and then consider the exceptional types.  We will find reduced words for $w_0$ in classical types that satisfy:
\begin{enumerate}
\item we can factor $w_0 = u_1u_2$ where $u_2$ is a reduced word for a Weyl group with one fewer generator than $W$ and of the same Lie type as $W$, 
\item the prefix $u_1$ contains $v_J^{-1}$ as a prefix, namely $u_1 = v_J^{-1}u_3$ for some $u_3$ in $W$, and
\item every reduced word for $v_J$ in $w_0$ actually occurs in $u_1$.
\end{enumerate}
In types $B_n, C_n, D_n$, these conditions can be satisfied simultaneously by one word, while in type $A_n$ we will use two different words for $w_0$, denoted $w_{0,1}$ and $w_{0,2}$.  The table in Figure \ref{figure: longest word classical type} gives these words.  
In each case, the prefix $u_1$ is the word in the first set of parentheses, for instance in type $B_n$ we have $u_1=s_1s_2 \cdots s_{n-1}s_ns_{n-1}\cdots s_1$ and 
\[u_2 = (s_2 \cdots s_{n-1}s_ns_{n-1}\cdots s_2)\cdots (s_{n-2}s_{n-1}s_ns_{n-1}s_{n-2})(s_{n-1}s_ns_{n-1})s_n.\]  

\begin{figure}[h]
 \begin{tabular}{|c |c|} \hline
 Type & Reduced word  \\ \hline 
 \( A_n \) & \( w_{0,1} = (s_1s_{2} \cdots s_{n-1}s_n)(s_1 s_{2} \cdots s_{n-1})\cdots (s_1 s_2) (s_1) \)  \\ 
  & \( w_{0,2} = (s_n s_{n-1} \cdots s_{2}s_1)(s_{n-1} s_{n-2} \cdots s_{1})\cdots (s_n s_{n-1}) (s_n) \)  \\ 
 \( B_n \) &  \( w_0 = (s_1s_2 \cdots s_{n-1}s_ns_{n-1}\cdots s_1)\cdots (s_{n-2}s_{n-1}s_ns_{n-1}s_{n-2})(s_{n-1}s_ns_{n-1})s_n \) \\
 \( C_n \) &  \( w_0 = (s_1s_2 \cdots s_{n-1}s_ns_{n-1}\cdots s_1)\cdots (s_{n-2}s_{n-1}s_ns_{n-1}s_{n-2})(s_{n-1}s_ns_{n-1})s_n \) \\
 \( D_n \) & \( w_0 = (s_1s_2 \cdots s_{n-2}s_ns_{n-1}s_{n-2} \cdots s_2 s_1)\cdots(s_{n-2}s_n s_{n-1}s_{n-2}) s_{n-1}s_n \)  \\
 \hline
  
 \end{tabular}
 \begin{caption}{Reduced words for $w_0$ in classical Lie types} \label{figure: longest word classical type}
 \end{caption}
\end{figure}

We now confirm that these words satisfy Conditions (1)-(3).  The expression for each $w_0$ is a standard form of a reduced word for the longest element of $W$ \cite[Section 3.4]{BjoBre03}.  (Bjorner-Brenti call the expression we use for $w_{0,1}$ in $A_n$ and $w_0$ in $B_n,C_n$ and $D_n$ the \emph{lexicographically first normal form}.  The expression we use for $w_{0,2}$ would logically be called the \emph{lexicographically last normal form}.)
The factorization $w_0=u_1u_2$ satisfies Condition (1) because $u_2$ has the same form as $w_0$ except without the simple reflection $s_1$ in types $B_n, C_n, D_n$ and type $A_n$ with $w_{0,2}$, and without $s_n$ in type $A_n$ with $w_{0,1}$.  
Condition (2) is satisfied by $w_{0,1}$ in type $A_n$ and by $w_0$ in the other types by inspection, since the prefix $u_1$ itself contains $v_J^{-1}$ as a prefix.  (In type $A_n$, the prefix $u_1=v_J^{-1}$ and $u_3=e$.)  We now show that every reduced word for $v_J$ ends in $s_1$.  The word $v_J = s_ns_{n-1}\cdots s_1 $ is a Coxeter element in each classical Lie type and so each reduced word for $v_J$ must satisfy $v_J > s_j$ for all simple reflections $s_j$. If $s_1$ is to the right of $s_2$ in the reduced word for $v_J$ then since $s_1$ commutes with all other $s_j$ we may assume $s_1$ is in the rightmost position, as desired.  If $s_1$ is to the left of $s_2$ then by the same argument we may assume $s_1$ is in the leftmost position in the alternate word $w = v_J$. Canceling $s_1$ on both sides, we conclude that the word $s_1 s_n s_{n-1} \cdots s_2 s_1$ has length $n-1$.  However 
\[s_1 s_n s_{n-1} \cdots s_2 s_1 = s_n s_{n-1} \cdots s_2 s_1 s_2\]
is in lexicographic normal form and so is reduced.  This contradicts our assumption on the length.  Therefore every reduced word of $v_J$ ends with the simple transposition $s_1$ as desired. 

Condition (3) is satisfied by $w_{0,2}$ in type $A_n$ and $w_0$ in the other types because $u_2$ has no instance of $s_1$. 

Next we check that Kumar's criterion holds.

We begin by identifying the roots $\alpha$ with $v_J w_0 \not \geq s_{\alpha}$ using the word $w_{0,1}$ in type $A_n$.  Together, Conditions (1) and (2) tell us that $v_J w_0 \geq w$ for each $w$ in the Weyl group generated by $s_1, s_2, \ldots, s_{n-1}$ in type $A_n$ and generated by $s_2, s_3, \ldots, s_{n}$ in types $B_n, C_n, D_n$.  So the set of roots $\alpha$ for which $v_J w_0 \not \geq s_{\alpha}$ is contained in the set 
\[\{ \alpha \in \Phi^+ \textup{  such that  }  \alpha > \alpha_i \}\]
where $i=n$ in type $A_n$ and $i=1$ in types $B_n, C_n, D_n$.  This is exactly the root system of $A_{n-1}$ so this set is exactly the collection of roots $\alpha$ for which $v_J w_0 \not \geq s_{\alpha}$ in type $A_n$.

In types $B_n, C_n, D_n$ we have $v_J w_0 = (s_{n-i} s_{n-i-1} \cdots s_1) u_2$ where $u_2$ is the longest element in the Weyl group $\langle s_2, s_3, \ldots, s_n \rangle$ and $i=1$ in types $B_n, C_n$ while $i=2$ in type $D_n$.  Thus 
\[v_J w_0 \geq s_{n-j} s_{n-j-1} \cdots s_2 s_1 s_2 \cdots s_{n-j-1} s_{n-j}\]
for any $j \geq 1$ in types $B_n, C_n$ and $j \geq 2$ in type $D_n$.  The reflection $s_{n-j} \cdots s_2 s_1 s_2 \cdots s_{n-j}$ corresponds to the root $\alpha = \alpha_1 + \alpha_2 + \cdots + \alpha_{n-j}$.  It follows that $v_j w_0 \geq s_{\alpha}$ for each root $\alpha = \alpha_1 + \alpha_2 + \cdots + \alpha_{n-j}$ where $j \geq 1$ in types $B_n, C_n$ and $j \geq 2$ in type $D_n$.

Finally, we show that these are all the roots $\alpha$ with $v_J w_0 \geq s_{\alpha}$ in types $B_n, C_n, D_n$.  If $v_J w_0 \geq s_{\alpha}$ and $\alpha > \alpha_1$ then we can write $s_{\alpha} = u s_1 u'$ with $\alpha > \alpha_1$, $u' \in \langle s_2, s_3, \ldots, s_n \rangle$, and $u \leq s_{n-i} s_{n-i-1} \cdots s_2$ for $i=1$ in types $B_n, C_n$ and $i=2$ in type $D_n$.  Every reflection is its own inverse so $us_1 u' = (u')^{-1}s_1 u^{-1}$.  This means the reflection is symmetric about the unique occurrence of $s_1$, namely $u^{-1}=u'$.  Since $u \leq s_{n-i} s_{n-i-1} \cdots s_2$ for $i=1$ in types $B_n, C_n$ and $i=2$ in type $D_n$, we conclude that $\alpha \leq \alpha_1 + \alpha_2 + \cdots + \alpha_{n-i}$ where $i=1$ in types $B_n, C_n$ and $i=2$ in type $D_n$.  This proves that in types $B_n, C_n, D_n$ the set of roots $\alpha$ for which $v_J w_0 \not \geq s_{\alpha}$ is  
\[\left\{ \alpha \in \Phi^+:  \begin{array}{c} \alpha > \alpha_i \textup{ for simple roots } \alpha_i \textup{ labeling } \\ \textup{ {\em at least} two leaves in the Dynkin diagram for } \Phi \end{array} \right\}.\]

We now  identify $p_{v_J}(w_0)$, using the word $w_{0,2}$ in type $A_n$.  Condition (3) implies that
\[p_{v_J}(w_0) = p_{v_J}(u_1).\]
By inspection, we observe that $u_1$ contains $v_J$ as a subword in exactly one way in each of these four Lie types.  In type $A_n$ the word $u_1=v_J$.  
When we evaluate Billey's formula we obtain
\[p_{v_J}(v_J) = \prod_{j=1}^n s_n s_{n-1} \cdots s_{j+1}(\alpha_j) = \prod_{\alpha \textup{ s.t. } \alpha > \alpha_n} \alpha.\]
Similarly, Billey's formula gives
\[p_{v_J}(u_1) = \prod_{j=1}^n s_1 s_{2} \cdots s_{n-i} s_n s_{n-1} \cdots s_{j+1}(\alpha_j)\]
where $i=1$ in types $B_n, C_n$ and $i=2$ in type $D_n$.  Evaluating, we obtain that
\[p_{v_J}(u_1) = \prod_{\alpha  \textup{ s.t. }  \alpha > \alpha_1 \textup{ and another leaf of the Dynkin diagram}} \alpha.\]
In all cases, we conclude that Kumar's criterion holds, namely that
\[p_{v_J}(w_0) = \prod_{\alpha  \textup{ s.t. }  v_J w_0 \not \geq s_{\alpha}} \alpha.\]

\noindent\textbf{Classically embeddable:}
Even in exceptional types, if all of the blocks \( J_1, J_2, \ldots, J_k \) are classically embeddable 
then \( p_{v_J}(w_J) \) is a product of polynomials \( \prod_{i=1}^k p_{v_{J_i}}(w_{J_i}) \) obtained in the above calculations.  
Thus $w_JB$ is smooth in $X^{v_J}$ for any $J$ consisting of classically embeddable blocks.  

\noindent\textbf{Non-classically embeddable:}
Conversely, suppose $J$ contains a  block $J_1$ which is not classically embeddable.  (Figure \ref{non-classically embeddable blocks} gives every example of a non-classically embeddable set $J$.)  We will show that \( p_{v_{J_1} }(w_{J_1}) \) does not satisfy Kumar's criterion, which implies that \(w_JB \) is not smooth in \(X^{ v_J} \).  

We will use the same argument in each exceptional Lie type.  First, we consider $p_{v_J}(w_0)$ as a polynomial in
 the simple roots $\alpha_1, \alpha_2, \ldots, \alpha_n$.  Note that $p_{v_J}(w_0)$ has degree $|J| = \ell(v_J)$ by definition of Billey's formula.  
We find a lower bound on the coefficient of a particular term $\alpha_i^{|J|}$ in $p_{v_J}(w_0)$ and show that the coefficient of $\alpha_i^{|J|}$ is greater
 than what can be obtained as a product of $|J|$ distinct roots.   (The simple root $\alpha_i$ that we use depends on the Lie type.)  To find this lower bound,
 we use the fact that Billey's formula is positive \cite[Equation 4.1 and subsequent discussion]{Bil99}, meaning that each nonzero term in the sum consists of a product of positive roots.  
Thus, it will suffice to find the contributions to $\alpha_i^{|J|}$  from the terms in Billey's formula corresponding to just a few reduced subwords of $w_0$.   (In all cases, we obtained one word 
for $w_0$ from SAGE, and often used the reduced word for $w_0^{-1}$ instead.)

\begin{figure}[h]
\begin{picture}(330,310)(120,-90)
\multiput(145,155)(50,0){4}{\circle*{5}}
\color{blue}
\thicklines
\multiput(145,155)(50,0){4}{\circle{10}}
\color{black}
\thinlines
\put(315,150){$F_4$}
\put(145,155){\line(1,0){50}}
\put(195,157){\line(1,0){50}}
\put(225,155){\line(-1,1){10}}
\put(225,155){\line(-1,-1){10}}
\put(195,153){\line(1,0){50}}
\put(245,155){\line(1,0){50}}
\put(140,140){$\alpha_1$}
\put(190,140){$\alpha_2$}
\put(240,140){$\alpha_3$}
\put(290,140){$\alpha_{4}$}

\multiput(145,95)(50,0){5}{\circle*{5}}
\color{red}
\thicklines
\multiput(145,95)(50,0){5}{\circle{10}}
\put(245,125){\circle{10}}
\thinlines
\color{black}
\put(360,90){$E_6$}
\put(145,95){\line(1,0){200}}
\put(245,95){\line(0,1){30}}
\put(245,125){\circle*{5}}
\put(140,80){$\alpha_1$}
\put(190,80){$\alpha_3$}
\put(240,80){$\alpha_4$}
\put(290,80){$\alpha_{5}$}
\put(340,80){$\alpha_{6}$}
\put(255,125){$\alpha_{2}$}

\multiput(145,40)(50,0){6}{\circle*{5}}

\thicklines
\color{blue}
\multiput(145,40)(50,0){6}{\circle{12}}
\thinlines
\color{black}
\put(410,35){$E_7$}
\put(145,40){\line(1,0){250}}
\put(245,40){\line(0,1){30}}
\put(245,70){\circle*{5}}

\thicklines
\color{blue}
\put(245,70){\circle{12}}
\thinlines
\color{black}

\put(140,25){$\alpha_1$}
\put(190,25){$\alpha_3$}
\put(240,25){$\alpha_4$}
\put(290,25){$\alpha_{5}$}
\put(340,25){$\alpha_{6}$}
\put(255,65){$\alpha_{2}$}
\put(390,25){$\alpha_{7}$}

\multiput(195,190)(50,0){2}{\circle*{6}}

\color{blue}
\thicklines
\multiput(195,190)(50,0){2}{\circle{10}}

\thinlines

\color{black}
\put(255,185){$G_2$}
\put(195,190){\line(1,0){50}}
\put(195,187){\line(1,0){50}}
\put(195,193){\line(1,0){50}}
\put(225,190){\line(-1,1){10}}
\put(225,190){\line(-1,-1){10}}
\put(190,175){$\alpha_1$}
\put(240,175){$\alpha_2$}

\multiput(145,-80)(50,0){7}{\circle*{5}}
\thicklines
\color{red}
\multiput(145,-80)(50,0){5}{\circle{14}}
\put(445,-80){\circle{14}}
\thinlines
\color{black}
\put(470,-85){$E_8$}
\put(145,-80){\line(1,0){300}}
\put(245,-80){\line(0,1){30}}
\put(245,-50){\circle*{5}}
\thicklines
\color{red}
\put(245,-50){\circle{14}}
\color{black}
\thinlines
\put(140,-95){$\alpha_1$}
\put(190,-95){$\alpha_3$}
\put(240,-95){$\alpha_4$}
\put(290,-95){$\alpha_{5}$}
\put(340,-95){$\alpha_{6}$}
\put(255,-55){$\alpha_{2}$}
\put(390,-95){$\alpha_{7}$}
\put(440,-95){$\alpha_{8}$}

\multiput(145,-20)(50,0){7}{\circle*{5}}
\color{blue}
\thicklines
\multiput(145,-20)(50,0){7}{\circle{10}}

\thinlines
\color{black}
\put(470,-25){$E_8$}
\put(145,-20){\line(1,0){300}}
\put(245,-20){\line(0,1){30}}
\put(245,10){\circle*{5}}
\color{blue}
\thicklines
\put(245,10){\circle{10}}
\color{black}
\thinlines
\put(140,-35){$\alpha_1$}
\put(190,-35){$\alpha_3$}
\put(240,-35){$\alpha_4$}
\put(290,-35){$\alpha_{5}$}
\put(340,-35){$\alpha_{6}$}
\put(255, 5){$\alpha_{2}$}
\put(390,-35){$\alpha_{7}$}
\put(440,-35){$\alpha_{8}$}

\end{picture}
\caption{Non-classically embeddable blocks in the exceptional Dynkin diagrams} \label{non-classically embeddable blocks}
\end{figure}
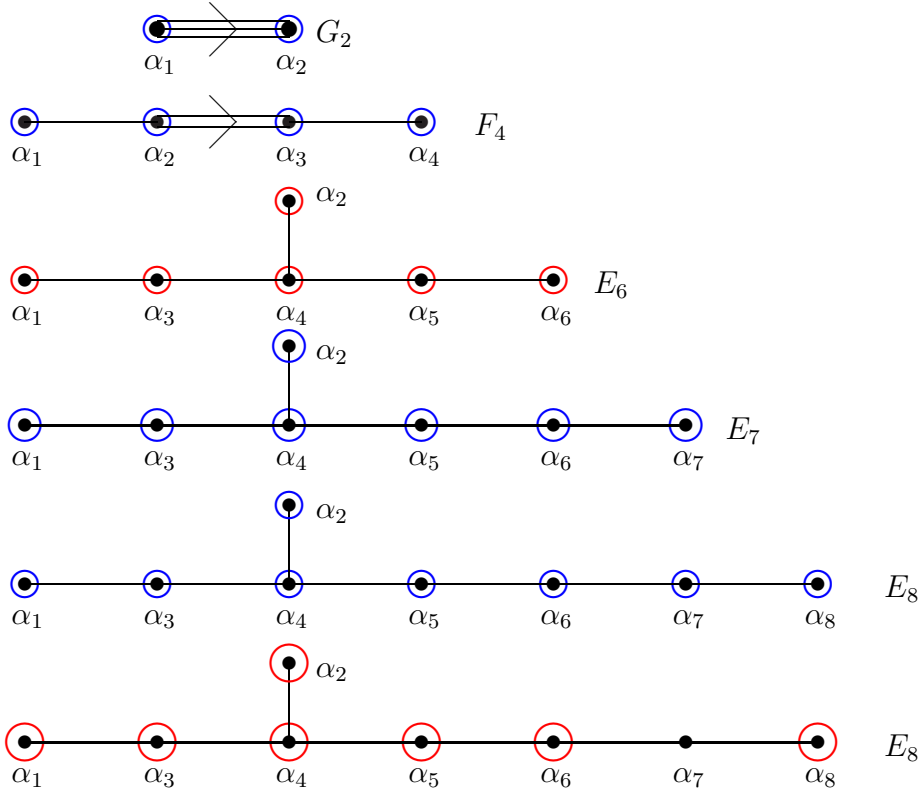

\noindent\textbf{Type \(E_6\):}  Choose the words \(v_J= s_1s_2s_3s_4s_5s_6\) and 
\[w_0 = s_1 s_3 s_4 s_5 s_6 s_2 s_4 s_5 s_3 s_4 s_1 s_3 s_2 s_4 s_5 s_6 s_2 s_4 s_5 s_3 s_4 s_1 s_3 s_2 s_4 s_5 s_3 s_4 s_1 s_3 s_2 s_4 s_1 s_3 s_2 s_1.\]
  Consider the  subwords of $w_0$ underlined below:
 \[\underline{s_1} s_3 s_4 s_5 s_6 \underline{s_2} s_4 s_5 s_3 s_4 s_1 \underline{s_3}s_2 \underline{s_4 s_5 s_6} s_2 s_4 s_5 s_3 s_4 s_1 s_3 s_2 s_4 s_5 s_3 s_4 s_1 s_3 s_2 s_4 s_1 s_3 s_2 s_1, \textup{ and }\]
 \[\underline{s_1} s_3 s_4 s_5 s_6 \underline{s_2} s_4 s_5 \underline{s_3} s_4 s_1 s_3 s_2 \underline{s_4 s_5 s_6} s_2 s_4 s_5 s_3 s_4 s_1 s_3 s_2 s_4 s_5 s_3 s_4 s_1 s_3 s_2 s_4 s_1 s_3 s_2 s_1.\]
We'll compute a lower bound for the coefficient of $\alpha_1^6$ in $p_{v_J}(w_0)$. The coefficient of $\alpha_1$ in each factor in each term of Billey's formula is $1$ so the coefficient of $\alpha_1^6$ in $p_{v_J}(w_0)$ is at least $2$.  However, the coefficient of $\alpha_1$ is at most one in roots of Lie type $E_6$.  Thus the maximum coefficient of $\alpha_1^6$ in a product of six distinct roots is $1$.  

\noindent\textbf{Type \(E_7\):} Choose the words \(v_J= s_1s_2s_3s_4s_5s_6s_7\) and 
\[ \begin{array}{c} w_0 = 
s_7 s_6 s_5 s_4 s_3 s_2 s_4 s_5 s_6 s_7 s_1 s_3 s_4 s_5 s_6 s_2 s_4 s_5 s_3 s_4 s_1 s_3 s_2 s_4 s_5 s_6 s_7 s_1 s_3 s_4 s_5 s_6 s_2 \cdot \\
 s_4 s_5 s_3 s_4 s_1 s_3 s_2 s_4 s_5 s_6 s_2 s_4 s_5 s_3 s_4 s_1 s_3 s_2 s_4 s_5 s_3 s_4 s_1 s_3 s_2 s_4 s_1 s_3 s_2 s_1. \end{array}\]
Billey's formula includes terms corresponding to the following subwords:
\[s_7 s_6 s_5 s_4 s_3 s_2 s_4 s_5 s_6 s_7 s_1 s_3 s_4 s_5 s_6 s_2 s_4 s_5 s_3 s_4 \underline{s_1 s_3 s_2 s_4 s_5 s_6 s_7} s_1 s_3 s_4 \cdots \textup{ and }\]
\[s_7 s_6 s_5 s_4 s_3 s_2 s_4 s_5 s_6 s_7 \underline{s_1} s_3 s_4 s_5 s_6 s_2 s_4 s_5 s_3 s_4 s_1 \underline{s_3 s_2 s_4 s_5 s_6 s_7} s_1 s_3 s_4 \cdots .\]
(Here we use the fact that $s_3s_2=s_2s_3$ because $2$ is only adjacent to $4$ in the Dynkin diagram for $E_7$.)
The simple root $\alpha_7$ occurs in each factor in each term so the term $\alpha_7^7$ occurs with coefficient at least $2$ in $p_{v_J}(w_0)$.  However, the coefficient of $\alpha_7$ is at most one in the roots of type $E_7$ so the coefficient of $\alpha_7^7$ is at most one in a product of seven distinct roots.

\noindent\textbf{Type \(E_8\):}
Fix the following long word for $w_0$ in \(E_8\):
\begin{align*}
w_0 =
&s_8 s_7 s_6 s_5 s_4 s_3 s_2 s_4 s_5 s_6 s_7 s_1 s_3 s_4 s_5 s_6 s_2 s_4 s_5 s_3 s_4 s_1 s_3 s_2 s_4 s_5 s_6 s_7 s_8 s_7 s_6 s_5 \cdot \\
& s_4 s_3 s_2 s_4 s_5 s_6 s_7 s_1 s_3 s_4 s_5 s_6 s_2 s_4 s_5 s_3 s_4 s_1 s_3 s_2 s_4 s_5 s_6 s_7 s_8s_7 s_6 s_5 s_4 s_3 s_2 \cdot \\
 & s_4 s_5 s_6 s_7 s_1 s_3 s_4 s_5 s_6 s_2 s_4 s_5 s_3 s_4 s_1 s_3 s_2 s_4 s_5 s_6 s_7 s_1 s_3 s_4 s_5 s_6 s_2 s_4 s_5 s_3 s_4 s_1 \cdot \\
 & s_3 s_2 s_4 s_5 s_6 s_2 s_4 s_5 s_3 s_4 s_1 s_3 s_2 s_4 s_5 s_3 s_4 s_1 s_3 s_2 s_4 s_1 s_3 s_2 s_1
\end{align*}

Now consider the following two subwords for \(v_J= s_1s_2s_3s_4s_5s_6s_7s_8\):
\[s_8 s_7 s_6 s_5 s_4 s_3 s_2 s_4 s_5 s_6 s_7 s_1 s_3 s_4 s_5 s_6 s_2 s_4 s_5 s_3 s_4 \underline{s_1 s_3 s_2 s_4 s_5 s_6 s_7 s_8} s_7 s_6 s_5 \cdots \textup{   and }\]
\[s_8 s_7 s_6 s_5 s_4 s_3 s_2 s_4 s_5 s_6 s_7 \underline{s_1} s_3 s_4 s_5 s_6 s_2 s_4 s_5 s_3 s_4 s_1 \underline{s_3 s_2 s_4 s_5 s_6 s_7 s_8} s_7 s_6 s_5 \cdots\]
(Again we use the fact that $s_3s_2=s_2s_3$ because $2$ is only adjacent to $4$ in the Dynkin diagram for $E_8$.)

The simple root $\alpha_8$ occurs in each factor in each term, with coefficient $2$ in one factor in each term, so the term $\alpha_8^8$ has coefficient at least $4$.  However, the highest root  in the root system of type $E_8$ is the only root in which $\alpha_8$ appears with coefficient greater than $1$, so the coefficient of $\alpha_8^8$ is at most two in a product of eight distinct roots.  The situation for \( v_J= s_1s_2s_3s_4s_5s_6s_8 \) is similar, since the highest root is obtained by 
\[s_8 s_7 s_6 s_5 s_4 s_3 s_2 s_4 s_5 s_6 s_7 s_1 s_3 s_4 s_5 s_6 s_2 s_4 s_5 s_3 s_4 {s_1 s_3 s_2 s_4 s_5 s_6 s_7 \alpha_8},\]
which appears as a factor in the terms corresponding to both 
 \[s_8 s_7 s_6 s_5 s_4 s_3 s_2 s_4 s_5 s_6 s_7 s_1 s_3 s_4 s_5 s_6 s_2 s_4 s_5 s_3 s_4 \underline{s_1 s_3 s_2 s_4 s_5 s_6} s_7 \underline{s_8} s_7 s_6 s_5 \cdots \textup{   and }\]
\[s_8 s_7 s_6 s_5 s_4 s_3 s_2 s_4 s_5 s_6 s_7 \underline{s_1} s_3 s_4 s_5 s_6 s_2 s_4 s_5 s_3 s_4 s_1 \underline{s_3 s_2 s_4 s_5 s_6} s_7 \underline{s_8} s_7 s_6 s_5 \cdots\]

\noindent\textbf{Type \(F_4\):}
Let \( v_\Delta = s_1 s_2 s_3 s_4 \) and \( w_0 = s_4s_3s_2s_3s_1s_2s_3s_4s_3s_2s_3s_1s_2s_3s_4s_3s_2s_3s_1s_2s_3s_1s_2s_1\). Billey's formula has several terms, including those corresponding to the subwords
\[s_4s_3s_2s_3\underline{s_1s_2s_3s_4}s_3s_2s_3s_1s_2s_3s_4s_3s_2s_3s_1s_2s_3s_1s_2s_1,\]
\[s_4s_3s_2s_3\underline{s_1s_2s_3}s_4s_3s_2s_3s_1s_2s_3\underline{s_4}s_3s_2s_3s_1s_2s_3s_1s_2s_1,\]
and 
\[s_4s_3s_2s_3\underline{s_1s_2}s_3s_4s_3s_2s_3s_1s_2\underline{s_3s_4}s_3s_2s_3s_1s_2s_3s_1s_2s_1.\]
By calculating the corresponding factors in Billey's formula and expanding, we see that each term contributes at least one occurrence of $\alpha_1^4$ to the final polynomial, so the coefficient of $\alpha_1^4$ in $p_{v_\Delta}(w_0)$ is at least $3$.  However, only one root $c_1 \alpha_1 + c_2 \alpha_2 + c_3 \alpha_3 + c_4 \alpha_4$ in the root system for $F_4$ has coefficient $c_1 > 1$, namely the highest (long) root $2 \alpha_1+3 \alpha_2 + 4 \alpha_3 + 2 \alpha_4$.  Thus the coefficient of $\alpha_1^4$ is at most $2$ in any product of four distinct roots in this root system.

\noindent\textbf{Type \(G_2\):}
Let \( v_\Delta = s_1 s_2 \) and \( w_0 = s_1 s_2 s_1 s_2 s_1 s_2 \).  The terms in Billey's formula that correspond to the subwords
\[s_1 s_2 \underline{s_1 s_2} s_1 s_2    \textup{  and  } \underline{s_1 s_2} s_1 s_2 s_1 s_2\]
are $(2\alpha_1 + 3 \alpha_2)(\alpha_1 + 2\alpha_2)$ and $\alpha_1 (\alpha_1+\alpha_2)$ respectively.  Thus the coefficient of $\alpha_1^2$ in $p_{v_J}(w_0)$ is at least $3$.  However, there is exactly one root of Lie type $G_2$ in which the coefficient of $\alpha_1$ is greater than $1$, namely the highest (long) root $2 \alpha_1 + 3 \alpha_2$.  So the maximum coefficient of $\alpha_1^2$ in a product of two distinct roots of Lie type $G_2$ is $2$.
 \end{proof}


\section{Expanding Peterson Schubert classes in terms of Schubert classes}\label{classexpansions}
Let $i_*: H_*(\Pet, \ZZ) \rightarrow H_*(G/B, \ZZ)$ denote the push-forward
of the natural inclusion map $i: \Pet \rightarrow G/B$.
In this section we use intersection theory to determine certain coefficients $a_u$ in the expansion of each Peterson-Schubert class in terms of 
Schubert classes \[ i_*([\Petschub ]) = \sum a_u [X_u] .\] In Theorem \ref{anonzero}, we start by proving that certain coefficients $a_{v_J}$ are nonzero and others $a_{v_K}$ are zero. 
 Theorem \ref{linearlyindependent} uses this information to prove that the push-forward $i_*$ is injective.    
 
 Insko showed that some of the relevant Peterson-Schubert classes and Schubert classes intersect transversally in type $A_n$.  He then used this to conclude that the $a_{v_J}$ in fact equal $1$
 \cite{Ins12}. We conjecture that in the other classical types, the coefficients $a_{v_J}$ are either $1$ or $2$.  More precisely, 
consider the vertices associated to $J$ in the Dynkin diagram for the root system.  
If these vertices are contained in a subdiagram isomorphic to the type $A$ Dynkin diagram, then we conjecture that the coefficient $a_{v_J}$ is one; 
otherwise, we conjecture that the coefficient $a_{v_J}$ is two.
 
\begin{theorem} \label{anonzero}
Let \( i_*([\PetSchub]) \) denote the class in \( H_*(G/B, \ZZ ) \) of a 
Peterson Schubert variety \( \PetSchub . \)  
Let \( [X_w] \) denote the fundamental class of a Schubert variety in \(  H_*(G/B, \ZZ). \)  
If we write \[ i_*([ \PetSchub]) = \sum a_{u} [X_u] \] 
then the coefficients $a_{v_J}$ satisfy  
\begin{enumerate}
\item  \( a_{v_J} \neq 0 \) and 
\item  \( a_{v_K} =0 \) for any \( K \neq J \) with $|K|=|J|$.
\end{enumerate}      
\end{theorem}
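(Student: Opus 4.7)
My approach is to evaluate the intersection product $i_*([\PetSchub]) \cdot [X^{v_K}]$ in $H_*(G/B, \ZZ)$ in two different ways, for each subset $K$ of the simple roots with $|K| = |J|$, and then compare the answers. This isolates the coefficient $a_{v_K}$ on one side and reduces its computation to a geometric intersection number that can be analyzed using the results of Sections~\ref{3} and \ref{Peterson}.

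First I would invoke the Poincar\'e duality pairing between the Schubert basis $\{[X_u]\}$ and the opposite Schubert basis $\{[X^v]\}$ in $G/B$. Because $X_u \cap X^v$ has dimension $\ell(u) - \ell(v)$ when $v \leq u$ in Bruhat order and is empty otherwise, the Schubert and opposite Schubert classes are dual:
\[[X_u] \cdot [X^v] = \delta_{u,v}\,[\mathrm{pt}] \qquad \text{whenever } \ell(u) = \ell(v).\]
By Lemma~\ref{dimensiontangent}, $\PetSchub$ has pure complex dimension $|J|$, so every class $[X_u]$ appearing in the expansion $i_*([\PetSchub]) = \sum a_u [X_u]$ satisfies $\ell(u) = |J|$. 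Since $\ell(v_K) = |K| = |J|$, pairing with $[X^{v_K}]$ collapses the sum to the single coefficient we care about:
\[i_*([\PetSchub]) \cdot [X^{v_K}] = a_{v_K}\,[\mathrm{pt}].\]

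Next I would evaluate the same intersection product geometrically. For part~(2), Lemma~\ref{empty} gives $\PetSchub \cap X^{v_K} = \emptyset$ whenever $K \neq J$ with $|K| = |J|$, so the geometric intersection vanishes and $a_{v_K} = 0$. For part~(1), Proposition~\ref{properintersection} shows that $\PetSchub$ and $X^{v_J}$ meet properly at the single point $w_JB$. Since Theorem~\ref{paving} produces an affine cell $C_{w_J,\Pett}$ of the maximal dimension $|J|$ that is open and dense in $\PetSchub$ and contains $w_JB$, the Peterson-Schubert variety $\PetSchub$ is smooth at $w_JB$. With one factor smooth and the intersection proper of complementary dimension in the smooth ambient $G/B$, the local intersection multiplicity at $w_JB$ is a positive integer (it can be read off as the length of the scheme-theoretic intersection, equivalently via Serre's Tor formula). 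Therefore $i_*([\PetSchub]) \cdot [X^{v_J}] = m\,[\mathrm{pt}]$ with $m \geq 1$, giving $a_{v_J} \neq 0$.

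The main obstacle is ensuring strict positivity of the local intersection multiplicity when $\PetSchub$ fails to meet $X^{v_J}$ transversely at $w_JB$, which by Section~\ref{section:smooth} is the generic situation outside type $A_n$ and whenever $J$ contains a non-classically-embeddable block. This is where smoothness of $\PetSchub$ at $w_JB$, combined with standard length-based formulas for intersection multiplicities along a smooth factor, does the decisive work and keeps the argument uniform across all Lie types. I would also remark on the expected refinement, consistent with Insko's transversality result in type $A_n$ \cite{Ins12}, that $a_{v_J} \in \{1,2\}$ in classical types depending on whether the Dynkin subdiagram cut out by $J$ is of type $A$.
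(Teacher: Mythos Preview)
Your proposal is correct and follows essentially the same approach as the paper: pair $i_*([\PetSchub])$ with opposite Schubert classes $[X^{v_K}]$, use Schubert duality to isolate $a_{v_K}$, then invoke Lemma~\ref{empty} for $K\neq J$ and Proposition~\ref{properintersection} for $K=J$. The only difference is that you justify positivity of the intersection multiplicity via smoothness of $\PetSchub$ at $w_JB$, whereas the paper simply cites Fulton's general fact that a proper intersection of closed irreducible subvarieties in a smooth variety has strictly positive multiplicity---your smoothness argument is correct but unnecessary.
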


\begin{proof}
For each Peterson Schubert class $[X_{w_J,\mathfrak P}]$ in $H_*(\Pet,\ZZ)$, the push-forward $i_*([X_{w_J, \mathfrak P}])$ defines a fundamental class in \( H_*(G/B, \ZZ ) \). 
Similarly, every Schubert variety \(X_w \subseteq G/B \) corresponds to a 
fundamental class \( [ X_w ] \in H_*(G/B, \ZZ ) \) 
and these classes form an orthonormal additive \( \ZZ \)-basis of \(H_*(G/B, \ZZ ) \) \cite[p. 160 Equation (1)]{F}. 

Since the flag variety is a compact orientable differentiable variety, it satisfies Poincar\'e duality. 
Thus we may identify $H_*(G/B,\ZZ)$ with $H^*(G/B,\ZZ)$ and identify the cup product of cohomology classes as 
the proper intersection of their Poincar\'e dual homology classes. Denote the intersection product of the two classes $[X]$ and $[Y]$ by $[X]\cdot [Y]=m[Z]$, 
where the varieties $X$ and $Y$
intersect properly in the subvariety $Z$ and $m$ is the positive integer intersection multiplicity \cite[Appendix B, Equation 31]{F}.  

The closed irreducible subvarieties \( \Petschub \) 
and \( X^{v_J} \) intersect properly in the point $[w_JB]$ by Proposition \ref{properintersection}, so the product  \( i_*([ \Petschub ])\cdot [X^{v_J}] = b_{v_J} [w_JB]\) 
for some integer multiplicity \( b_{v_J} > 0 \) \cite[Appendix B, Equation 31]{F}.  Furthermore $[X_{u}] \cdot [X^{u}] = 1$ and $[X_{u}]\cdot [X^{v}]=0$ if $u \neq v$ because Schubert classes form an orthonormal basis. 
Hence the coefficient of
$[X_{v_J}]$ in the expansion $i_*([\PetSchub]) = \sum a_{u} [X_u]$ must be the nonzero quantity $b_{v_J}$.
 Lemma \ref{empty} says that \( \Petschub \cap X^{v_K} = \emptyset \) when $K \neq J$ and $|K|=|J|$ so we conclude \( i_*([ \Petschub ])\cdot [X^{v_J}] = 0.\)
\end{proof}

Finally we show that the fundamental classes of Peterson-Schubert
varieties form a linearly independent set when expanded in terms of the fundamental classes
of Schubert varieties.  

\begin{theorem} \label{linearlyindependent}
For any Lie type, let $i: \Pet \hookrightarrow G/B$ denote the (proper) embedding 
of $\Pet$ as a closed subvariety of $G/B$.
The push-forward 
\[i_*: H_*(\Pet ; \mathbb Z) \rightarrow H_*(G/B; \mathbb Z)\]
is an injection.  
\end{theorem}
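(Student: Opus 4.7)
The plan is to combine Corollary \ref{basis} with Theorem \ref{anonzero} in a direct linear-algebra argument. By Corollary \ref{basis}, applied in the special case of the Peterson variety, the Peterson--Schubert classes $\{[X_{w_J,\Pett}] : J \subseteq \Delta\}$ form a free $\ZZ$-basis of $H_*(\Pet, \ZZ)$. So to show $i_*$ is injective, it suffices to take an arbitrary $\ZZ$-linear combination $\sum_{J} c_J [X_{w_J,\Pett}]$ lying in the kernel of $i_*$ and prove each $c_J = 0$.

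The first observation to exploit is degree. Each Peterson--Schubert variety $\Petschub$ has (complex) dimension $|J|$ by Lemma \ref{dimensiontangent}, while the Schubert variety $X_u$ has dimension $\ell(u)$. Push-forward preserves homological degree, so when we expand
\[ i_*([\PetSchub]) = \sum_u a^{(J)}_u [X_u], \]
only those $u$ with $\ell(u) = |J|$ can appear. Since $v_J$ is a Coxeter element of $W_J$ with $\ell(v_J) = |J|$, the vanishing in Theorem \ref{anonzero}(2) applies to all $K \neq J$ of the \emph{same} size as $J$; for $K$ of a different size, $a^{(K)}_{v_J}$ is already zero for degree reasons.

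From here the argument is immediate. Assume $\sum_{J} c_J [X_{w_J,\Pett}] \in \ker i_*$. Because the Schubert classes $\{[X_u]\}$ are a $\ZZ$-basis of $H_*(G/B, \ZZ)$, each coefficient in the expansion
\[ 0 = i_*\Bigl( \sum_J c_J [\PetSchub] \Bigr) = \sum_u \Bigl( \sum_J c_J a^{(J)}_u \Bigr) [X_u] \]
must vanish. Fix any $J_0 \subseteq \Delta$ and read off the coefficient of $[X_{v_{J_0}}]$:
\[ 0 = \sum_{J} c_J a^{(J)}_{v_{J_0}}. \]
By the degree remark together with Theorem \ref{anonzero}(2), the only surviving term in this sum is $J = J_0$, whose coefficient is $a^{(J_0)}_{v_{J_0}} \neq 0$ by Theorem \ref{anonzero}(1). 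Hence $c_{J_0} \, a^{(J_0)}_{v_{J_0}} = 0$ in $\ZZ$, which forces $c_{J_0} = 0$. Since $J_0$ was arbitrary, this shows $i_*$ is injective.

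The proof is essentially a bookkeeping argument once Theorem \ref{anonzero} is in hand: the coefficient matrix $\bigl( a^{(J)}_{v_K} \bigr)$, restricted to any fixed degree, is diagonal with nonzero diagonal entries, so it is injective on coefficients. The only real obstacle was packaged into Theorem \ref{anonzero}, where the geometric input --- the proper intersection $\Petschub \cap X^{v_J} = \{w_J B\}$ from Proposition \ref{properintersection} and the emptiness result of Lemma \ref{empty} --- was used to pin down exactly the diagonal and off-diagonal Schubert coefficients needed here. No further geometry is required beyond invoking that theorem.
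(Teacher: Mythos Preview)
Your proof is correct and follows essentially the same approach as the paper: both arguments reduce to the observation that the coefficient matrix $\bigl(a^{(J)}_{v_K}\bigr)$ in each degree is diagonal with nonzero diagonal entries by Theorem \ref{anonzero}, and hence the map is injective. Your formulation as a direct kernel computation is a mild rephrasing of the paper's matrix argument, and your explicit remark that the off-diagonal vanishing for $|K| \neq |J|$ holds for degree reasons is a nice clarification.
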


\begin{proof}
Every Peterson-Schubert fundamental class $i_*([\PetSchub])$  can be written as a $\ZZ$-linear combination of the Schubert classes \[ i_*([\PetSchub]) = \sum a_u [X_u] .\]
Fix any $k$ with $0 \leq 2k \leq 2n=\dim_{\RR} (\Pet)$ and consider the matrix of 
\[i_{2k} : H_{2k}(\Pet; \ZZ) \rightarrow H_{2k}(G/B; \ZZ)\]
 with respect to the bases of Peterson-Schubert classes and ordinary Schubert classes, respectively.   Theorem \ref{anonzero} implies that the coefficient $a_{v_J} \neq 0$ in the expansion for $i_*([\PetSchub])$ and $a_{v_J} =0$ in the expansion of $i_*([X_{w_K, \mathfrak P }])$ for $K \neq J$ with $|K|=|J|$. 
Consider the submatrix  of $i_{2k}$ whose columns and rows correspond to $\{ [\PetSchub] \}$ and $ \{ [X_{v_J}] \}$ for all $J$ with $|J|=k$.  This submatrix is a diagonal $b_{2k} \times b_{2k}$ matrix with nonzero entries $a_{v_J}$ along the diagonal, where $b_{2k}$ is the $2k$-th Betti number of \( H_*(\Pet; \ZZ) \). This implies that the map $i_{2k}$ is a nonsingular linear transformation 
for all $k$ with $0 \leq 2k  \leq \dim_{\RR}  \Pet$.  
Hence the map $i_*$ is an injection. \end{proof}

In fact, the intersection theory argument provides strictly more information than the injection of Theorem  \ref{linearlyindependent} since it identifies specific coefficients in the nonsingular matrix
This injection is equivalent to a cohomology surjection, as we show in general in the next result and specifically for Peterson varieties in the subsequent claim. 

\begin{proposition}\label{UCTduality}
Let $Y$ be a topological space with a subspace $X \subseteq Y$ so that neither $X$ nor $Y$ have any odd-dimensional (integral) homology.  For each $i$, suppose that $\{X_j: j \in J_i\}$ form a basis for $H_i(X, \mathbb{C})$ and that $\{Y_k: k \in K_i\}$ form a basis for $H_i(Y,\mathbb{C})$.  Consider the map induced by the inclusion $\iota: X \hookrightarrow Y$ on homology.  Let $c_{j,k}$ denote the coefficients of the expansion $\iota_*(X_j) = \sum_k c_{j,k} Y_k$ in terms of the basis $Y_k$ for $H_i(Y,\mathbb{C})$.  If for each $i$ there are a collection of classes $\{Y_k: k \in \mathcal{I}_i \}$ such that the matrix $(c_{j,k})_{j \in J_i, k \in \mathcal{I}_i}$ is invertible then the classes $\{\iota^*(Y_k): k \in \bigcup_i \mathcal{I}_i\}$ form a basis for the cohomology $H^*(X,\mathbb{C})$ under the map $\iota^*: H^*(Y,\mathbb{C}) \rightarrow H^*(X,\mathbb{C})$.
\end{proposition}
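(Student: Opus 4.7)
The plan is to use the Universal Coefficient Theorem to translate the given invertibility of the matrix $(c_{j,k})$ on homology into the claimed basis statement on cohomology. Since we work with $\mathbb{C}$ coefficients (a field), UCT gives natural isomorphisms $H^i(X,\mathbb{C}) \cong \mathrm{Hom}_{\mathbb{C}}(H_i(X,\mathbb{C}),\mathbb{C})$ and similarly for $Y$, and under these isomorphisms the pullback $\iota^*$ is precisely the transpose of the pushforward $\iota_*$. Let $\{X_j^*\}_{j \in J_i}$ and $\{Y_k^*\}_{k \in K_i}$ denote the bases of $H^i(X,\mathbb{C})$ and $H^i(Y,\mathbb{C})$ dual to $\{X_j\}$ and $\{Y_k\}$; by mild abuse of notation, the symbol ``$Y_k$'' appearing inside $\iota^*(Y_k)$ denotes the cohomology class $Y_k^*$.

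With respect to these dual bases, the identity $\iota_*(X_j) = \sum_k c_{j,k} Y_k$ dualizes to $\iota^*(Y_k^*) = \sum_j c_{j,k} X_j^*$. I would verify this by direct pairing: $\langle \iota^*(Y_k^*), X_j \rangle = \langle Y_k^*, \iota_*(X_j) \rangle = c_{j,k}$. Now fix a degree $i$ and restrict to the subset $\{Y_k^* : k \in \mathcal{I}_i\}$. The matrix that expresses $\{\iota^*(Y_k^*) : k \in \mathcal{I}_i\}$ in the basis $\{X_j^* : j \in J_i\}$ is, by the above formula, exactly the transpose of the submatrix $(c_{j,k})_{j \in J_i,\, k \in \mathcal{I}_i}$. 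Invertibility of this submatrix (which forces $|\mathcal{I}_i| = |J_i| = \dim H_i(X,\mathbb{C})$) therefore implies that $\{\iota^*(Y_k^*) : k \in \mathcal{I}_i\}$ is a linearly independent set of the correct cardinality in $H^i(X,\mathbb{C})$, hence a basis.

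Assembling over $i$, the graded decomposition $H^*(X,\mathbb{C}) = \bigoplus_i H^i(X,\mathbb{C})$ then shows that $\{\iota^*(Y_k) : k \in \bigcup_i \mathcal{I}_i\}$ is a basis of $H^*(X,\mathbb{C})$, as claimed. The proof is in essence a linear-algebraic consequence of the UCT duality between $\iota_*$ and $\iota^*$, and so there is no serious technical obstacle; the role of the hypothesis that $X$ and $Y$ have no odd-dimensional homology is only to guarantee that the degree-by-degree bases $\{X_j\}, \{Y_k\}$ appear in even degrees and that the statement assembles cleanly into a basis of the total graded cohomology, matching the way the proposition is used in the Peterson-variety setting.
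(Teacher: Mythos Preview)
Your proposal is correct and follows essentially the same route as the paper: both arguments invoke the Universal Coefficient Theorem to identify $H^i$ with the dual of $H_i$, observe that under this identification $\iota^*$ is the transpose of $\iota_*$ (the paper verifies this by the same pairing computation $\iota^*(Y_k^*)(X_j)=Y_k^*(\iota_*(X_j))=c_{j,k}$), and then conclude from invertibility of the submatrix that $\{\iota^*(Y_k^*):k\in\mathcal{I}_i\}$ is a basis in each degree. The one minor difference is bookkeeping: the paper phrases the UCT isomorphism as $H^i(X,\mathbb{C})\cong \mathrm{Hom}(H_i(X,\mathbb{Z}),\mathbb{C})$ and cites the no-odd-homology hypothesis to kill the $\mathrm{Ext}$ term, whereas you work directly with $\mathrm{Hom}_{\mathbb{C}}(H_i(X,\mathbb{C}),\mathbb{C})$ (where the $\mathrm{Ext}$ term vanishes automatically over a field)---but this is cosmetic and both versions are valid.
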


\begin{proof}
Consider the Universal Coefficient Theorem for cohomology.  Neither $X$ nor $Y$ have odd-dimensional (ordinary integral) homology by hypothesis.  Thus the Universal Coefficient Theorem reduces to the isomorphism
\[H^i(X,\mathbb{C}) \cong Hom(H_i(X,\mathbb{Z}), \mathbb{C}).\]
This isomorphism is natural with respect to the chain map induced by the inclusion 
\[\iota: X \hookrightarrow Y\] 
so for each $i$ we get a commutative diagram
\[\begin{array}{ccc}
H^i(X,\mathbb{C}) &\cong& Hom(H_i(X,\mathbb{Z}), \mathbb{C}) \\
\uparrow \iota^* & & \uparrow \iota^* \\
H^i(Y,\mathbb{C}) &\cong& Hom(H_i(Y,\mathbb{Z}), \mathbb{C}) 
\end{array}\]
For each $j$, let $X_j^*$ denote the dual to the basis class $X_j$, namely the function defined by $X_j^*(X_{j'})=\delta_{j,j'}$, and similarly for the duals $\{Y_k^*\}$.  The map 
\[\iota^*: Hom(H_i(Y,\mathbb{Z}), \mathbb{C}) \rightarrow Hom(H_i(X, \mathbb{Z}), \mathbb{C})\]
is defined by $\iota^*(f) = f \circ \iota_*$.  Evaluating at $Y_k^*$ gives 
\[\iota^*(Y_k^*) (X_j)= Y_k^* \circ \iota_*(X_j) = Y_k^* \left(\sum c_{j,k'} Y_{k'} \right)\]
where $c_{j,k'}$ is the coefficient of the term $Y_{k'}$ in the expansion of $\iota_*(X_j)$ in terms of the basis $\{Y_k\}$ for $H_i(Y,\mathbb{C})$. 
 This evaluates to $c_{j,k}$ because $Y_k^*$ is a linear functional.  The submatrix $(c_{j,k})_{j \in J_i, k \in \mathcal{I}_i}$ of these coefficients is invertible by 
hypothesis so the image of the classes $\{\iota^*(Y_k^*): k \in \mathcal{I}_i\}$ forms a basis for $Hom(H_i(X,\mathbb{Z}), \mathbb{C})$.  Using the Universal Coefficient Theorem, 
we conclude $\{\iota^*(Y_k): k \in \mathcal{I}_i\}$ form a basis for $H^i(X,\mathbb{C})$ as well.  
The images $\iota^*(Y_k)$ are homogeneous because each $Y_k$ is a geometric class induced from an irreducible variety, and because the chain maps induced by $\iota$ preserve degree. 
 It follows that the set $\{\iota^*(Y_k): k \in \bigcup_i \mathcal{I}_i\}$ forms a basis for $H^*(X, \mathbb{C})$ as desired.
\end{proof}

The next result confirms that in fact the Peterson Schubert classes satisfy the conditions in the previous claim.  

\begin{proposition}
The images of the Schubert classes $\{\iota^*(X_{v_J}):  J \subseteq \Delta\}$ in $H^*(G/B, \mathbb{C})$ span $H^*(\Pet, \mathbb{C})$.
\end{proposition}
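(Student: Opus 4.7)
The plan is to apply Proposition \ref{UCTduality} directly, with $X = \Pet$ and $Y = G/B$, using Theorem \ref{anonzero} to verify the invertibility hypothesis.

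First I would verify that neither $\Pet$ nor $G/B$ has odd-dimensional integral homology. For $G/B$ this is classical, via the Schubert cell decomposition. For $\Pet$ this follows from Corollary \ref{basis}: the Peterson variety is paved by affine cells, each of real even dimension, so its only nontrivial homology sits in even degrees.

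Next I would choose the bases required by Proposition \ref{UCTduality}. For each $k$, Corollary \ref{basis} together with Lemma \ref{dimensiontangent} gives a basis
\[
\{[\PetSchub] : J \subseteq \Delta,\ |J| = k\}
\]
for $H_{2k}(\Pet, \mathbb{C})$, while $H_{2k}(G/B, \mathbb{C})$ has the standard basis of Schubert classes $\{[X_u] : \ell(u) = k\}$. Since $\iota_*$ preserves degree, for each $J$ of size $k$ the expansion
\[
\iota_*[\PetSchub] = \sum_{\ell(u) = k} a_u [X_u]
\]
only involves $[X_u]$ with $\ell(u) = k$. In particular, the Coxeter elements $v_J$ with $|J| = k$ all have length $k$, so it is meaningful to single out the subcollection $\mathcal{I}_{2k} = \{v_J : J \subseteq \Delta,\ |J| = k\}$ of index $k$ Schubert classes.

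Now I would invoke Theorem \ref{anonzero}. The rows of the coefficient matrix in degree $2k$ are indexed by $\{J : |J| = k\}$ and the columns (restricted to $\mathcal{I}_{2k}$) by $\{K : |K| = k\}$, so the submatrix is square of size $\binom{|\Delta|}{k}$. Part (1) of Theorem \ref{anonzero} gives nonzero diagonal entries $a_{v_J}$, and part (2) gives vanishing off-diagonal entries $a_{v_K} = 0$ for $K \neq J$ with $|K| = |J|$. Hence the submatrix $(a_{v_K})_{J,K}$ is diagonal with nonzero diagonal, and thus invertible. Applying Proposition \ref{UCTduality} to each degree $2k$ in turn yields that $\{\iota^*(X_{v_J}) : J \subseteq \Delta\}$ is actually a basis of $H^*(\Pet, \mathbb{C})$, which is more than we need.

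There is no serious obstacle here since the invertibility of the matrix is essentially immediate from Theorem \ref{anonzero}. The only points that demand some care are the bookkeeping that the Peterson Schubert classes $[\PetSchub]$ really do form a $\mathbb{C}$-basis of $H_*(\Pet, \mathbb{C})$ (not just a generating set), that $v_J$ and $w_J$ live in the correct Bruhat/homological degree $|J|$, and that the indexing sets in row and column match so the submatrix is genuinely square.
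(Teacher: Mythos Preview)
Your proposal is correct and follows essentially the same approach as the paper: verify the hypotheses of Proposition \ref{UCTduality} by using the affine paving for the absence of odd homology, and invoke Theorem \ref{anonzero} to conclude that the relevant submatrix $(a_{v_K})_{J,K}$ is diagonal with nonzero diagonal entries, hence invertible. The only differences are cosmetic---you give slightly more bookkeeping detail (e.g., citing Lemma \ref{dimensiontangent} for the degree match) than the paper does.
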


\begin{proof}
Neither $\Pet$ nor $G/B$ have odd-dimensional cohomology since they each have pavings by complex affine cells by Theorem \ref{paving}.  
Corollary \ref{basis} showed that the fundamental classes of the Peterson Schubert varieties form a basis for $H_i(\Pet, \mathbb{C})$, respectively the ordinary Schubert classes and $H_i(G/B,\mathbb{C})$.  When we expand $\iota_i(\PetSchub)$ into ordinary Schubert classes, we obtain
\[\iota_i(\PetSchub) = \sum_{w} c_{J,w}X_w.\]
Theorem \ref{anonzero} states that $c_{J,v_{J'}}$ is nonzero only when $J=J'$.  In other words, the matrix $(c_{J, v_{J'}})_{J, J' \subseteq \Delta s.t. |J|=|J'|=i}$ is an invertible diagonal matrix.  This shows that the Peterson varieties satisfy the conditions of Proposition \ref{UCTduality}.
\end{proof}

\section{Thanks}
The authors are thankful to Sam Evens, Megumi Harada, Nicholas Teff, and aBa Mbirika for many helpful conversations during this project.

\section{Appendix} \label{appendix}

In this appendix we prove that the rows of the matrix $\mat_i$ defined in Equation \eqref{equation: eqns with constant term} are linearly independent in all Lie types.  Our strategy is first to consider a matrix with all of the rows of $\mat_i$ as well as some additional rows, and second to restrict to the square submatrix formed by certain columns of that larger matrix.  We will then prove that the resulting square matrix is invertible.  It follows that the rows of the larger matrix are linearly independent, and hence the rows of $\mat_i$ are linearly independent. 

The rest of this appendix is structured as follows.  We will define the square submatrix in Section \ref{section: square submatrix}.  For most of our arguments, we actually only need to know that certain entries of the submatrix are nonzero; however, in a few cases, we will need explicit values for these entries.  Section \ref{section: structure constants of submatrix} collects all the information that we need about the entries of this submatrix.  Section \ref{section: classical lie types} proves the claim for classical Lie types, while the final section proves the claim by inspection in the exceptional Lie types.

\subsection{The square submatrix $\matr_i$ and the injection $\rr:\roots_{i} \rightarrow \roots_{i-1}$}\label{section: square submatrix}

The matrix $\mat_i$ was defined from a particular linear system of equations given in Equation \eqref{equation: eqns with constant term}.  In this appendix, we instead study a larger matrix defined as follows.
For each root $\gamma \in \roots_{i}$ and $\alpha \in \roots_{i-1}$ the entry in row $\gamma$ and column $\alpha$ is given by 
\[\begin{array}{cl}
m_{\alpha, \gamma - \alpha} & \textup{  if }  \gamma - \alpha \in \Delta \textup{ and} \\
0 & \textup{  if }  \gamma - \alpha \not \in \Delta.\end{array}
\]
The entry $m_{\alpha, \gamma - \alpha}$ is an integer defined in the next section.  (The heights of $\gamma$ and $\alpha$ differ by exactly one, so the difference $\gamma- \alpha$ is a root exactly if the difference is a simple root.)  

To construct the submatrix $\matr_i$ we define an injective function $\rr:\roots_{i} \rightarrow \roots_{i-1}$ for each $i \geq 2$ that assigns to 
the root $\gamma \in \roots_i$ a unique root $\rr(\gamma) \in \Phi_{i-1}$ with $\gamma - \rr(\gamma) \in \Delta$.  
The submatrix $\matr_i$ consists of the columns indexed by roots in the image $\textup{Im}(\rr)$.   Lemma \ref{appendix1} proves that $\matr_i$ is invertible in classical Lie types.  Lemma \ref{appendix2}  proves the claim in exceptional types by explicit computations.

We begin by listing the roots in each Lie type according to the height of the root, and choosing an order on roots of a given height. 
 Figure \ref{figure: classical roots and ribbons} gives the roots in classical types; the exceptional roots are listed later in the appendix, and
 are taken from Plotkin and Vavilov's work \cite{PloVav96, Vav04}. To save space, we write the roots in \emph{string Dynkin form}, so that
the positive root $\alpha = \c_1 \a_1 +\c_2 \a_2  + \cdots + \c_n \a_n$ is listed in the table as
 $\alpha = \c_1 \c_2\cdots \c_n$. 
For instance, the root $\alpha = \a_2 + 2 \a_3+2 \a_4$ is listed as $0122$.

The columns of these tables partition the roots into sets with two important properties:
 \begin{enumerate} 
  \item each column contains exactly one root $ \alpha \in \Phi_i$ of height $i$ which appears in row $i$, and 
  \item the root in row $i+1$ of column $j$ is obtained from the root in row $i$ of column $j$ by adding a simple positive root. 
 \end{enumerate}
 The columns in exceptional types are labeled by the index of the simple root in the first row.

\begin{definition}
Define the function $\rr:\roots_{i} \rightarrow \roots_{i-1}$ by the rule that if $\gamma$ is in column $j$ and row $i$ then $\rr(\gamma)$ is the unique root in column $j$ and row $i-1$.  
\end{definition}

For example in type $A_n$ we have $\rr(\a_1+\a_2) = \a_1$ and $\rr(\a_2+\a_3) = \a_2$.  More generally, in type $A_n$ the function is defined by 
\[\rr(\alpha_i+\alpha_{i+1} + \alpha_{i+2} + \cdots +\alpha_{i+k-1} +  \alpha_{i+k}) = \alpha_i+\alpha_{i+1} + \alpha_{i+2} + \cdots +\alpha_{i+k-1}.\]
(The function is not as easily described in other Lie types.)

We now formally define $\matr_i$ to be the matrix whose rows 
are indexed by $\roots_{i}$ and whose columns are indexed by the elements in $\roots_{i-1}$ of the form $\rr(\gamma')$ for some $\gamma' \in \roots_{i}$.  In particular, the entries of $\matr_i$ are indexed by pairs $\gamma, \gamma' \in \roots_i$ and the entry in row $\gamma$ and column $\rr(\gamma')$ given by
\[\begin{array}{cl}
m_{\rr(\gamma'), \gamma - \rr(\gamma')} & \textup{  if }  \gamma - \rr(\gamma')  \in \Delta \textup{ and} \\
0 & \textup{  if } \gamma - \rr(\gamma') \not \in \Delta.\end{array}
\]

\subsection{Structure constants $m_{\alpha, \gamma}$ and the entries of $\matr_i$} \label{section: structure constants of submatrix}  By construction, the entries $m_{\alpha, \beta}$ are structure constants in the Lie algebra.  The matrix $\matr_i$ is generally very sparse, in the sense that almost all entries are zero.  In most cases $\matr_i$ is a permutation of a diagonal matrix, so we will be able to prove that $\matr_i$ is invertible simply using one fact:

\begin{proposition}\cite[Proposition 8.4(d)]{H2}  \label{proposition: structure constant nonzero} 
If $\alpha, \beta$ are roots and $\alpha+\beta$ is a root then $m_{\alpha, \beta}$ is nonzero.
\end{proposition}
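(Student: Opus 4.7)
The plan is to invoke the $\mathfrak{sl}_2$-representation theory of the $\alpha$-string through $\beta$, which is the standard framework for controlling Chevalley structure constants. First I would attach to $\alpha$ its $\mathfrak{sl}_2$-triple $\{E_\alpha, F_\alpha, H_\alpha\} \subseteq \mathfrak{g}$ and consider the subspace
\[ V \;=\; \bigoplus_{k \in \mathbb{Z}} \mathfrak{g}_{\beta + k\alpha} \]
of $\mathfrak{g}$. A standard fact about root systems (see \cite[Chapter III, Section 9]{H2}) is that the set of integers $k$ for which $\beta + k\alpha \in \Phi$ forms an unbroken string $\{-p,\, -p+1,\, \ldots,\, q-1,\, q\}$, the $\alpha$-string through $\beta$.

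Next I would argue that $V$ is an irreducible $\mathfrak{sl}_2$-module of dimension $p+q+1$. Because each root space is one-dimensional, $\dim V = p+q+1$; because $\ad(E_\alpha)$ raises and $\ad(F_\alpha)$ lowers the $\alpha$-grading, $V$ is $\mathfrak{sl}_2$-stable, and the $H_\alpha$-weights on $V$ form a symmetric, unbroken chain, which forces irreducibility. In the basis $\{E_{\beta+k\alpha}\}$, the raising operator $\ad(E_\alpha)$ sends $E_{\beta+k\alpha}$ to $m_{\alpha,\,\beta+k\alpha}\,E_{\beta+(k+1)\alpha}$. If we had $m_{\alpha,\,\beta+k\alpha} = 0$ for some $-p \leq k < q$, then the highest-weight subspace of $V$ generated from $E_{\beta+k\alpha}$ under $\ad(E_\alpha)$ would give a proper nonzero $\mathfrak{sl}_2$-invariant subspace, contradicting irreducibility.

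Finally, the hypothesis that $\alpha + \beta$ is a root says exactly that $\beta + 1 \cdot \alpha \in \Phi$, so $q \geq 1$; taking $k=0$ in the previous paragraph gives $m_{\alpha,\beta} \neq 0$, as required. The main obstacle is harmonizing sign and normalization conventions: Lemma~\ref{H1} fixes the root vectors $E_\gamma$ at the outset, and one must check that the abstract $\mathfrak{sl}_2$-irreducibility argument really constrains the particular integers $m_{\alpha,\beta}$ appearing in $\Ad(u_\alpha(x))E_\beta$ rather than some rescaled version of them. Because our claim is only nonvanishing, and because rescaling the fixed $E_\gamma$'s only multiplies the $m_{\alpha,\beta}$ by nonzero scalars, the sign/scale ambiguities in the Chevalley basis are immaterial and the $\mathfrak{sl}_2$-ladder argument above suffices.
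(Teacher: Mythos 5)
Your proof is correct: the paper itself offers no argument here but simply cites Humphreys, and the $\mathfrak{sl}_2$-string argument you give (irreducibility of the $\alpha$-string through $\beta$ forcing $[\mathfrak{g}_\alpha,\mathfrak{g}_\beta]=\mathfrak{g}_{\alpha+\beta}$) is exactly the proof of Proposition 8.4(d) in that reference. Your closing remark about rescaling the fixed root vectors is also the right way to reconcile the abstract statement with the particular constants $m_{\alpha,\beta}$ used in the paper, so nothing further is needed.
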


In some cases we will need more information about the values of $m_{\alpha, \beta}$.  We collect that information in this section.

By choosing root vectors $E_{\gamma} \in \mathfrak{g}_{\gamma}$ appropriately, we may assume that $m_{\alpha, \beta}$ are positive for a particular set of pairs of roots $(\alpha', \beta')$ (called {\em extraspecial} roots) that determine all of the structure constants $m_{\alpha, \beta}$  \cite[Proposition 4.2.2]{Car89}.  

\begin{figure}
\begin{tabular}{|l|l|l|l|l|l|} \hline
 \multicolumn{6}{|c|}{Type $A_n$}   \\ \hline 
 $\Phi_1$      & $1000\cdots000$ & $0100\cdots000$ & $\cdots$ & $0000\cdots010$  & $0000\cdots001$   \\
 $\Phi_2$      & $1100\cdots000$ & $0110\cdots000$ & $\cdots$ & $0000\cdots011$  &                   \\
 $\Phi_3$      & $1110\cdots000$ & $0111\cdots000$ & $\cdots$ &                  &                   \\
$\vdots$& $\vdots  $      & $\vdots$        & $\cdots$ &                    &          \\ 
 $\Phi_{n-1}$    & $1111\cdots110$ & $0111\cdots111$ &          &                    &          \\
 $\Phi_n$      & $1111\cdots111$ &                 &          &                    &          \\ \hline

\end{tabular}

\begin{tabular}{|l|l|l|l|l|l|} \hline
 \multicolumn{6}{|c|}{Type $B_n$} \\ \hline 
 $\Phi_1$      & $1000\cdots000$  & $0100\cdots000$ & $\cdots$ & $0000\cdots010$ & $0000\cdots001$  \\
 $\Phi_2$      & $1100\cdots000$  & $0110\cdots000$ & $\cdots$ & $0000\cdots011$ &                  \\
 $\Phi_3$      & $1110\cdots000$  & $0111\cdots000$ & $\cdots$ & $0000\cdots012$ &                  \\
$\vdots$         &$\vdots  $        & $\vdots$        & $\cdots$ &                 &                  \\ 
 $\Phi_{n-1}$    & $1111\cdots110$  & $0111\cdots111$ &     $\cdots$     &                 &                  \\
 $\Phi_{n}$      & $1111\cdots111$  & $0111\cdots112$ &    $\cdots$      &                 &                  \\ 
 $\Phi_{n+1}$    & $1111\cdots112$  & $0111\cdots122$ &  $\cdots$        &                 &                   \\
$\vdots$         & $\vdots$         & $\vdots$        &          &                 &                   \\
$\Phi_{2n}$      & $1112\cdots222$  & $0122\cdots222$ &          &                      &        \\
$\Phi_{2n-2}$     & $1122\cdots222$ &                               &          &                     &        \\
$\Phi_{2n-1}$    & $1222\cdots222$ &                               &          &                     &        \\ \hline
 
\end{tabular}

\begin{tabular}{|l|l|l|l|l|l|} \hline
 \multicolumn{6}{|c|}{Type $C_n$} \\ \hline 
 $\Phi_1$      & $1000\cdots000$  & $0100\cdots000$ & $\cdots$ & $0000\cdots010$ & $0000\cdots001$  \\
 $\Phi_2$      & $1100\cdots000$  & $0110\cdots000$ & $\cdots$ & $0000\cdots011$ &                  \\
 $\Phi_3$      & $1110\cdots000$  & $0111\cdots000$ & $\cdots$ & $0000\cdots021$ &                  \\
$\vdots$       &$\vdots  $        & $\vdots$        & $\cdots$ &                 &                  \\ 
 $\Phi_{n-1}$  & $1111\cdots110$  & $0111\cdots111$ &    $\cdots$      &                 &                  \\
 $\Phi_n$      & $1111\cdots111$  & $0111\cdots121$ &      $\cdots$    &                 &                  \\ 
 $\Phi_{n+1}$    & $1111\cdots121$  & $0111\cdots221$ &   $\cdots$       &                 &                   \\
$\vdots$& $\vdots$         & $\vdots$        &          &                 &                   \\
$\Phi_{2n}$      & $1122\cdots221$  & $0222\cdots221$ &          &                      &        \\
$\Phi_{2n-2}$    & $1222\cdots221$ &                               &          &                     &        \\
$\Phi_{2n-1}$    & $2222\cdots221$ &                               &          &                     &        \\ \hline
 
\end{tabular}

\begin{tabular}{|l|l|l|l|l|l|l|l|} \hline
 \multicolumn{7}{|c|}{Type $D_n$} \\ \hline 
 $\Phi_1$      &$1000\cdots0000$ & $0100\cdots0000$ & $\cdots$ & $0000\cdots0100$ & $0000\cdots0001$   &$0000\cdots0010$   \\
 $\Phi_2$      &$1100\cdots0000$ & $0110\cdots0000$ & $\cdots$ & $0000\cdots0110$& $0000\cdots0101$ &        \\
 $\Phi_3$      &$1110\cdots0000$ & $0111\cdots0000$ & $\cdots$ & $0000\cdots0111$     &$0000\cdots1101$ &    \\
$\vdots$&$\vdots  $       & $\vdots$         & $\cdots$ &  & $\vdots$         &   \\ 
 $\Phi_{n-1}$    &$1111\cdots1110$ & $0111\cdots1111$ & $\cdots$ & &  $1111\cdots1101$  &  \\
 $\Phi_n$      &$1111\cdots1111$ & $0111\cdots1211$ &       $\cdots$   &                  &   &      \\ 
 $\Phi_{n+1}$    &$1111\cdots1211$ & $0111\cdots2211$ &  $\cdots$       &                  &   &     \\
$\vdots$&   $ \vdots  $   & $\vdots$ &          &                  &   &  \\
 $\Phi_{2n-5}$   & $1112\cdots2211$& $0122\cdots2211$ &          &                  &   &  \\
 $\Phi_{2n-4}$   & $1122\cdots2211$&                  &          &                  &   &  \\
 $\Phi_{2n-3}$   & $1222\cdots2211$&                  &          &                  &   &     \\ \hline
\end{tabular}
\caption{Roots in classical types and the function $\rr:\roots_{i} \rightarrow \roots_{i-1}$} \label{figure: classical roots and ribbons}
\end{figure}

We will need structure constants for simply-laced root systems, which can be recursively derived from a few relations \cite[Section 15]{PloVav96}:
\begin{itemize}
\item For each pair of roots $\alpha, \beta \in \Phi^+$
\begin{equation}\label{equation: negating coefficient}
m_{\alpha, \beta} = -m_{\beta, \alpha}.
\end{equation}
\item Suppose $\alpha_i+ \beta \in \Phi^+$ and there is no $j< i$ such that $\alpha_i+ \beta = \alpha_j+ \overline{\beta}$ for a root $\overline{\beta} \in \Phi^+$.  Then 
\begin{equation}\label{equation: first recursion formula}
m_{\alpha_i,\beta} =1.
\end{equation}
\item Suppose $\alpha, \beta, \alpha+\beta$ are all in $\Phi^+$ and that $i$ is the minimal index such that
$\alpha+\beta = \alpha_i+ \overline{\beta}$ for some root $\overline{\beta} \in \Phi^+$.   
Then the structure constant is \cite[Equation 14]{Vav04}: 
\begin{equation} m_{\alpha,\beta} = 
m_{\alpha_i,\alpha-\alpha_i}m_{\alpha-\alpha_i, \beta}   \text{ if } \alpha- \alpha_i \in \Phi^+. 
\label{recursion}  \end{equation} 
\end{itemize}

The structure constants we calculate  all have the form $m_{\alpha, \alpha_j}$ for some simple root $\alpha_j \in \Delta$ so we need no other cases of this formula.  

The root $\alpha_i$ in Equation \eqref{recursion} is the simple root of smallest index in $\alpha$ unless $\alpha - \alpha_i$ is not a root---as for instance in the root $\alpha_{n-2}+\alpha_{n-1}+\alpha_n$ in type $D_n$.

\subsection{Classical Lie types}\label{section: classical lie types} We now prove the matrices $\matr_i$ are invertible in classical Lie types. 

\begin{lemma} \label{appendix1}
For each $i$ with $2 \leq i \leq n$ in classical Lie type, the matrix $\matr_i$  is invertible.
\end{lemma}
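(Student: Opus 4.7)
The plan is to show that $\matr_i$ is invertible through a combinatorial analysis in each classical type, using the tables in Figure~\ref{figure: classical roots and ribbons}. A key observation common to every type is that the diagonal entries $m_{\rr(\gamma), \gamma - \rr(\gamma)}$ are all nonzero: by construction of $\rr$, the difference $\gamma - \rr(\gamma)$ is a simple root, and $\rr(\gamma) + (\gamma - \rr(\gamma)) = \gamma$ is a root, so Proposition~\ref{proposition: structure constant nonzero} applies. What remains is to control the off-diagonal entries, which are nonzero only when $\gamma - \rr(\gamma') \in \Delta$ for some $\gamma' \neq \gamma$.

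For type $A_n$, such an off-diagonal entry occurs exactly when $\gamma$ starts one index to the left of $\gamma'$, so that $\gamma - \rr(\gamma')$ is the leftmost simple root of $\gamma$. Ordering rows and columns by the leftmost nonzero index of the corresponding root yields an upper bidiagonal matrix with nonzero diagonal, hence invertible.

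For types $B_n$ and $C_n$, I would use the same leftmost-index ordering. The explicit parametrization of positive roots shows that at each height the pair (leftmost index, height) determines the root uniquely, so distinct columns of the table correspond to distinct leftmost indices. Any off-diagonal entry $(\gamma, \rr(\gamma'))$ with $\gamma \neq \gamma'$ then forces $\gamma$ to have strictly smaller leftmost index than $\gamma'$. The resulting matrix is upper triangular with nonzero diagonal. A few special configurations arise at the heights where coefficient-$2$ roots first appear, but the triangular structure persists because in those cases the transitions in the tables still add a simple root strictly to the right of the current leftmost index.

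The main obstacle is type $D_n$, where the forked Dynkin diagram allows two distinct roots of the same height to share a leftmost nonzero index---for instance in $D_4$ the height-$3$ roots $e_1 - e_4$ and $e_1 + e_4$ (written $1110$ and $1101$ in string-Dynkin form) both have leftmost index $1$. As a result, the natural ordering no longer produces a triangular matrix: the nonzero pattern of $\matr_3$ in $D_4$ is genuinely cyclic. I would handle this by computing the relevant structure constants directly using the recursions \eqref{equation: negating coefficient}, \eqref{equation: first recursion formula}, and \eqref{recursion}; the simply-laced sign conventions ensure that the ``diagonal'' and ``cyclic'' contributions to the determinant share a sign (in the $D_4$ case giving determinant $2$), so the determinant does not vanish. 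An induction on $n$, together with careful bookkeeping of the signs against the column structure of the tables, should extend this argument to all $D_n$.
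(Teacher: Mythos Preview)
Your treatment of types $A_n$, $B_n$, and $C_n$ is essentially the paper's argument: ordering roots by the leftmost nonzero index makes $\matr_i$ upper triangular with nonzero diagonal, and Proposition~\ref{proposition: structure constant nonzero} finishes it. The paper phrases this as ``$\gamma_j$ lies in the span of $\alpha_j,\alpha_{j+1},\ldots,\alpha_n$ while $\beta_k>\alpha_k$ for $k<j$,'' but it is the same observation.

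The gap is in type $D_n$. You correctly diagnose that the fork breaks triangularity, and your $D_4$ computation is fine, but ``an induction on $n$ \ldots\ should extend this argument'' is not a proof. There is no evident way the matrix $\matr_i$ for $D_n$ reduces to one for $D_{n-1}$: the roots of height $i$ in $D_n$ do not restrict cleanly, and the sizes and positions of the cyclic blocks shift with both $i$ and $n$. The paper avoids induction on $n$ entirely. Instead it partitions the rows and columns of $\matr_i$ (for $i\ge 3$) into two pieces according to whether the indexing root has a coefficient $2$ or not. The block indexed by roots $\ge \cdots 01211$ is upper triangular by the same leftmost-index reasoning that works in $B_n/C_n$, so $\det\matr_i$ factors as $\det A\cdot\det D$ with $D$ invertible. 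The remaining block $A$, indexed by roots with all coefficients $\le 1$, is where the fork lives; the paper computes its entries explicitly from the recursions \eqref{equation: negating coefficient}--\eqref{recursion} and writes $A$ in a uniform shape (upper bidiagonal except for a fixed $3\times 3$ corner coming from the three ``forked'' roots $\cdots 110$, $\cdots 101$, $\cdots 111$). Invertibility then reduces to checking that single $3\times 3$ minor. The case $i=2$ is handled separately by a column permutation. You should replace the induction sketch with this block decomposition; your structure-constant computations are exactly what is needed, but applied to the fixed submatrix $A$ rather than fed into an induction.
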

\begin{proof}
We start with types $A_n, B_n$, and $C_n$.  Suppose that $\gamma_j \in \roots_i$ is in the $j^{th}$ row of the table and $\beta_k \in \textup{Im}(\rr)$ is in the $k^{th}$ column of the table.  Also suppose that the $k^{th}$ column is to the left of the $j^{th}$ column, namely $k<j$. 
 By inspection we see  $\beta_k > \alpha_k$ and $\gamma_j$ is in the span of $\alpha_j, \alpha_{j+1}, \cdots, \alpha_n$.  
In particular $\gamma_j \not > \alpha_k$.  Hence $\gamma_j - \beta_k$ is not a simple root 
and so the $(j,k)$ entry of $\matr_i$ is zero if $k<j$.  
It follows that  the matrices $\matr_i$ are upper-triangular when the roots indexing rows and columns are ordered from leftmost to rightmost as in Figure \ref{figure: classical roots and ribbons}.  The entries along the diagonal are $m_{\rr(\gamma), \gamma - \rr(\gamma)}$ which are nonzero  
by Proposition \ref{proposition: structure constant nonzero}.  Thus the determinant of each matrix $\matr_i$  is nonzero in types $A_n$, $B_n$, and $C_n$ as desired.

The argument for type $D_n$ is slightly different when $i=2$ than for all other $i \geq 3$.  In this case,  reorder the columns of $\matr_2$ so that they are indexed by $\alpha_1$, $\alpha_2$, $\ldots$, $\alpha_{n-3}$, $\alpha_n$, $\alpha_n-2$, and reorder the rows in the same way, so that if $\gamma$ indexes column $j$ then $\rr^{-1}(\gamma)$ indexes row $j$.  The resulting matrix is upper-triangular, so $\det(\matr_2)$ is nonzero.

Now let $i \geq 3$ in type $D_n$.  To show that the determinant of $\matr_i$ is nonzero we partition the rows of the matrix $\matr_i$ and then show that in one subset of $k$ rows, exactly $k$ columns have nonzero entries.  The determinant of $\matr_i$ is thus the product of the minors corresponding to the $k \times k$ submatrix and the complementary $(n-k) \times (n-k)$ submatrix.  (In fact, we repeat this argument twice in our proof.)

The first block consists of rows and columns indexed by  $(\gamma, \rr(\gamma') )$ with $ \gamma, \gamma' \geq  \cdots 01211$.
The second consists of rows and columns indexed by  $(\gamma,\rr( \gamma'))$ with $\gamma, \gamma' \leq 11\cdots 1$.

To begin, consider  the rows indexed by a root $\gamma$ with $\gamma \geq  \cdots 01211$. 
 If $\gamma = \cdots 012 \cdots 2 11$  or $\gamma = \cdots 0 11 2 \cdots 2 11$  
then $\rr(\gamma)$ is the only root in the image $Im(r_i)$ that is less than $\gamma$. Hence there is exactly one nonzero entry in this row.
 Otherwise, the row indexed by $\gamma$ has exactly 2 nonzero entries.  These entries are in the columns indexed by :
\begin{enumerate}
\item $\rr(\gamma)$ and
\item $\gamma-\alpha_i$ where $i$ is the smallest index for which $\gamma-\alpha_i$ is a root. 
\end{enumerate}
The entry indexed by  $(\gamma, \rr(\gamma))$ lies on the diagonal and the entry indexed by $(\gamma, \gamma- \alpha_j)$ is just to the right of the diagonal in $\matr_i$.  

We have shown two things.  First, the matrix $\matr_i$ has the form $\left[ \begin{array}{c|c} A & B \\ \cline{1-2} 0 & D \end{array}\right]$ where $D$ is
 the square submatrix whose rows and columns are indexed by pairs $(\gamma, \rr(\gamma') )$ with roots $ \gamma, \gamma' \geq  \cdots 01211$ and $A$ is the square
 submatrix with rows and columns indexed by $(\gamma,\rr( \gamma'))$ where $\gamma, \gamma' \leq 11\cdots 1$.  Thus $\det (\matr_i) = \det(A) \det(D)$.  
Second, the matrix $D$ is upper-triangular with nonzero elements along the diagonal.  Hence $\matr_i$ is invertible if and only if $A$ is invertible.

Next we consider the square submatrix of $\matr_i$ with rows and columns indexed by $(\gamma, \rr(\gamma'))$ where  $\gamma, \gamma' \leq 11 \cdots 1$.  

Suppose $\gamma' \leq 1 \cdots 100$.  Using Equations \eqref{equation: negating coefficient}, \eqref{equation: first recursion formula}, and \eqref{recursion}, we see that
\[ m_{\gamma', \alpha_j} = \begin{cases} -1 & \text{ if } j \text{ is the smallest index with } \gamma' > \alpha_j \\
                           1 & \text{ if } j \text{ is the biggest index with } \gamma' > \alpha_j
                          \end{cases}\]
When $\gamma' =\cdots 01$ or $\gamma' = \cdots 10$ we have $m_{\gamma', \alpha_j} = -1$ for all $\alpha_j$ with $\gamma' + \alpha_j \in \roots$.  

Together, these determine the square submatrix of $\matr_i$ indexed by  $(\gamma, \gamma - \rr( \gamma'))$ where $\gamma, \gamma' \leq 11\cdots 1$.  For ease of notation, we permute the rows and columns to move the first row and first column of the submatrix to the last row and last column.  The resulting submatrix has the following form: 
\[ \begin{pmatrix} 1&-1 & & &  & & & \\ 
                   & 1 &-1 &  & & & & \\ 
                   & & \ddots &\ddots &  & & & \\ 
                   & &  &\ddots &  \ddots & & & \\ 
                   & &  & & 1 &-1 & & \\ 
                   & & & &  & 1& -1& 0\\
                   & & & &  &0 & -1& -1\\ 
                   & & & &  &  1& 0& -1\\  
   \end{pmatrix}\]
where the last three rows are indexed by roots of the form $\cdots 10$, $\cdots 11$, and $\cdots 01$, and the last three columns are indexed by roots of the form $\cdots 100$ , $\cdots 10$, and $\cdots 01$.  By the same argument as before, the determinant of this matrix is the product of the bottom-right $3 \times 3$ minor with the top-left minor.  Inspection shows that both of these minors are nonzero.  It follows that $\det(\matr_i)$ is nonzero, as desired.

 \end{proof}  

\subsection{Exceptional Lie types}

We now prove that the matrices $\mat_i$ have linearly independent rows in the exceptional Lie types. 
 
\begin{lemma} \label{appendix2}
For each $i$ with $2 \leq i \leq n$ in exceptional Lie types, the matrix $\mat_i$  has maximal rank.
\end{lemma}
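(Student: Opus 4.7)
The plan is to imitate the classical-type argument of Lemma \ref{appendix1} by constructing, for each exceptional Lie type and each height $i$, a square submatrix $\matr_i$ of $\mat_i$ whose columns are indexed by the image of an injection $\rr:\roots_i \to \roots_{i-1}$, and then verifying by direct computation that $\det(\matr_i)\neq 0$. Since $\matr_i$ is obtained from $\mat_i$ by selecting a subset of columns, the invertibility of $\matr_i$ implies that the rows of $\mat_i$ are linearly independent, which is the desired maximal-rank conclusion.

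First I would tabulate, for each of $G_2, F_4, E_6, E_7, E_8$, all positive roots organized by height (following the notation and conventions of Plotkin--Vavilov \cite{PloVav96, Vav04} already cited), in the same style as Figure \ref{figure: classical roots and ribbons}. From these tables I would read off, for every $\gamma \in \roots_i$, a choice of simple root $\alpha_j(\gamma) \in \Delta$ such that $\gamma - \alpha_j(\gamma) \in \roots_{i-1}$; the assignment $\rr(\gamma) := \gamma - \alpha_j(\gamma)$ then defines the required injection provided the $\alpha_j(\gamma)$ are chosen consistently. In most cases the natural ``leftmost admissible column'' rule (exactly as used in Figure \ref{figure: classical roots and ribbons}) produces an injection automatically because each height-$i$ root lies in a unique such column; in the few ambiguous cases (where a root $\gamma$ has two simple-root descents both of which are already used by other rows), a small combinatorial adjustment selects a different simple root.

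Next, with $\rr$ fixed, the entry of $\matr_i$ in row $\gamma$ and column $\rr(\gamma')$ is $m_{\rr(\gamma'),\,\gamma-\rr(\gamma')}$ if $\gamma-\rr(\gamma')\in \Delta$ and is zero otherwise. To compute these entries explicitly I would use the Chevalley recursion formulas \eqref{equation: negating coefficient}, \eqref{equation: first recursion formula}, and \eqref{recursion}, which determine every structure constant of the simply-laced exceptional types $E_6, E_7, E_8$ from the extraspecial pairs; in the doubly-laced cases $F_4$ and $G_2$ one combines these with the nonvanishing statement of Proposition~\ref{proposition: structure constant nonzero}. In each type I would order the rows and columns by the same left-to-right convention used for the root tables, so that $\matr_i$ becomes ``nearly'' upper-triangular, with the only nonzero off-diagonal entries coming from roots $\gamma$ admitting two simple-root descents. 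The diagonal entries are of the form $m_{\rr(\gamma),\gamma-\rr(\gamma)}$, which by Proposition~\ref{proposition: structure constant nonzero} are all nonzero.

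The main obstacle is that, unlike the classical types where one uniform argument covers a whole infinite family, here one must treat finitely many matrices in each exceptional type ($i$ ranging from $2$ up to the height of the highest root, which is $6$ in $G_2$, $11$ in $F_4$, $11$ in $E_6$, $17$ in $E_7$, and $29$ in $E_8$). For small $i$ the matrices are tiny and triangularity after reordering is immediate; for intermediate heights, where a single root can have several simple-root descents, the matrix acquires a few nontrivial $2\times 2$ or $3\times 3$ blocks of the same shape as the $D_n$ block treated in the proof of Lemma~\ref{appendix1}, and the determinant factors through minors that can be checked by direct substitution of values computed from \eqref{equation: negating coefficient}--\eqref{recursion}. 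I would present these block minors in a table (one row per type, one column per height with a nontriangular block), check their nonvanishing, and collect the results to conclude $\det(\matr_i)\neq 0$ in every case, establishing the lemma.
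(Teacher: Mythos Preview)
Your proposal takes essentially the same approach as the paper: build the square submatrix $\matr_i$ via an injection $\rr:\roots_i\to\roots_{i-1}$, fill in the entries using the Plotkin--Vavilov/Vavilov structure constants, and verify $\det(\matr_i)\neq 0$ case by case. The paper does exactly this, listing every matrix explicitly and either observing that the determinant expansion has a single nonzero term or computing the determinant outright; it also proves a small lemma (Lemma~\ref{2x2}) that any $2\times 2$ such matrix is automatically invertible, which disposes of all heights where $|\roots_i|\leq 2$.

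Two points where your outline is slightly optimistic relative to what the computation actually requires. First, the ``nearly upper-triangular with a few $2\times 2$ or $3\times 3$ blocks'' picture does not survive in all cases: several of the intermediate-height matrices in $E_7$ and $E_8$ (for instance $E_8$ at height~5, a $7\times 7$ matrix with determinant $-5$) genuinely need the full determinant computed, not merely a product of small block minors. Second, for $F_4$ the nonvanishing statement of Proposition~\ref{proposition: structure constant nonzero} alone is not enough, since at some heights the determinant has multiple nonzero terms whose signs could in principle cancel; the paper uses the explicit signed structure constants from \cite{PloVav96} to evaluate these (e.g.\ $\det=3$ at height~3). So your plan is correct, but carrying it out is less structured than the classical case and amounts to the explicit tabulation the paper provides.
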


To prove that the matrices $\mat_i$ have maximal rank in exceptional types we have listed the  matrices $\mat_i$ below 
and identified, where necessary, a $m \times m$ square submatrix of $\mat_i$ with nonzero determinant (where $m$ is the number of rows in $\mat_i$.) 
We use the structure constants for $F_4$ given by Plotkin and Vavilov \cite[p. 96]{PloVav96} 
and the structure constants for $E_\ell$ ($\ell=6,7,8$) given by Vavilov \cite[p 1526--1546]{Vav04}.

We will compute the determinant of the matrix $\mat_{i}$ if the matrix is square; if not, we will specify columns and compute the determinant of the submatrix with those columns.  
All of the matrices in these tables are sparse, 
in the sense that the determinant \[\det A = \sum_{\sigma \in S_n} \prod_{j=1}^n a_{j,\sigma(j)}\] has few nonzero terms $\prod_{j=1}^n a_{j,\sigma(j)}$.  In fact, many of these matrices have a unique nonzero term in their determinant.  In these cases, we state that the determinant is nonzero because it has a unique term.  Only in the cases where there is more than one nonzero term do we compute the determinant, and then we list it below the matrix.

Finally, we omit any matrices with two rows because for those cases, any two columns are linearly independent by the following lemma.

\begin{lemma} \label{2x2}
If $\mat_i$ is a $2 \times 2$ matrix then it is invertible.  
\end{lemma}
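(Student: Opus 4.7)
My approach is a direct case analysis of the zero pattern of the $2 \times 2$ matrix $\mat_i$. Denote the rows by $\gamma_1, \gamma_2 \in \roots_i$ and the columns by $\alpha_1, \alpha_2 \in \roots_{i-1}$, so the $(k,j)$ entry is $m_{\alpha_j,\gamma_k-\alpha_j}$ if $\gamma_k-\alpha_j \in \Delta$ and is $0$ otherwise. By Proposition \ref{proposition: structure constant nonzero} every entry of the first type is a nonzero integer, so invertibility is controlled entirely by which entries vanish.

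The first step is to show that each row and each column of $\mat_i$ contains a nonzero entry. For rows this is the standard fact that every positive root of height at least $2$ is the sum of a simple root and a positive root of strictly smaller height; thus each $\gamma_k$ satisfies $\gamma_k-\alpha_j \in \Delta$ for some $j$. For columns, since $\roots_i$ is nonempty neither $\alpha_j$ is the maximal root, so there is some simple root $\beta$ with $\alpha_j+\beta \in \roots$; because $|\roots_i|=2$, the root $\alpha_j+\beta$ must equal $\gamma_1$ or $\gamma_2$, producing a nonzero entry in column $j$.

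Next, I would enumerate the possible zero patterns subject to the constraint that every row and every column has at least one nonzero entry. If $\mat_i$ has two zeros they must lie on a diagonal or antidiagonal, and if $\mat_i$ has exactly one zero the remaining three entries already determine $\det \mat_i$ up to sign. In each of these subcases $\det \mat_i$ is a signed product of nonzero integers and is therefore nonzero.

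The remaining case, in which all four entries of $\mat_i$ are nonzero, is the main obstacle: here the genuine cancellation $a_{11}a_{22}=a_{12}a_{21}$ is a priori possible. To handle this case I would appeal to the recursion formulas \eqref{equation: negating coefficient}--\eqref{recursion} together with the explicit extraspecial-pair data recorded in \cite{PloVav96, Vav04} to compute $\det \mat_i$ for each $2 \times 2$ matrix $\mat_i$ that actually arises in the exceptional-type tables of this appendix. Since only finitely many such matrices occur, the verification reduces to a finite check, and in each instance the structure constants on the diagonal and antidiagonal differ in magnitude or sign so that $\det\mat_i \neq 0$.
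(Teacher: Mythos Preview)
Your proposal would work in principle, but you have missed the key observation that makes the paper's proof a few lines long and free of any structure-constant computations.

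First, a small correction to your setup: the two columns are not arbitrary elements of $\roots_{i-1}$ but specifically $r(\beta)$ and $r(\gamma)$, where $\beta,\gamma$ are the two roots indexing the rows and $r$ is the injection $\roots_i\to\roots_{i-1}$ defined earlier in the appendix. By the very definition of $r$ we have $\beta-r(\beta)\in\Delta$ and $\gamma-r(\gamma)\in\Delta$, so both \emph{diagonal} entries are automatically nonzero. Your Step~1 is therefore immediate, and your argument for it (which implicitly assumes the columns exhaust $\roots_{i-1}$) is neither needed nor quite correct as stated.

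The main point, however, is that the ``all four entries nonzero'' case you isolate as the principal obstacle \emph{never occurs}. Write $\beta-r(\beta)=\alpha_i$, $\gamma-r(\gamma)=\alpha_j$, and suppose one off-diagonal entry is nonzero, say $\gamma-r(\beta)=\alpha_k\in\Delta$. Then $\alpha_k\neq\alpha_i$ (else $\beta=\gamma$) and $\alpha_k\neq\alpha_j$ (else $r(\beta)=r(\gamma)$). Computing the other off-diagonal difference,
\[
\beta-r(\gamma)=(r(\beta)+\alpha_i)-(\gamma-\alpha_j)=\alpha_i+\alpha_j-\alpha_k,
\]
which has coefficient $-1$ on the simple root $\alpha_k$ and hence is not a simple root. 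Thus the other off-diagonal entry is zero, the matrix is triangular, and its determinant is the product of the two nonzero diagonal entries.

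So the finite check against the extraspecial-pair tables that you propose in Step~3 is entirely avoidable: the zero pattern alone already forces the matrix to be triangular. Your Step~3 is not wrong as a strategy, but it is incomplete as written (you assert the outcome without performing the verification) and is superseded by the one-line root-system argument above.
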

\begin{proof}
Let $\beta$ and $\gamma$ be the two roots of height $i+1$.
Let $r(\beta)$ and $r(\gamma)$ be the roots of height $i$.
By definition of the function $r$ we have
\begin{itemize}
 \item $\beta - r(\beta) = \alpha_i$
\item $\gamma - r(\gamma) = \alpha_j$
\end{itemize}
where $\alpha_i$ could be equal to $\alpha_j$.
This implies that the matrix has nonzero diagonal entries $m_{r(\beta),\alpha_i}$ and $m_{r(\gamma),\alpha_j}$.  If both  non-diagonal entries are zero, then the matrix has full rank.

Now suppose one of the non-diagonal entries is nonzero, say $\gamma-r(\beta)= \alpha_k$.  
We will show $\beta-r(\gamma)$ is not a root and therefore the other non-diagonal entry is zero.  

First note that $\alpha_i \neq \alpha_k$ because $\beta-r(\beta) =\alpha_i$ and $\gamma-r(\beta) = \alpha_k$ but $\gamma \neq \beta$.  
Next note that $\alpha_j \neq \alpha_k$ because $\gamma-r(\gamma) = \alpha_j$ and $\gamma-r(\beta) = \alpha_k$ but $r(\gamma) \neq r(\beta)$.  
Finally we calculate 
\begin{align*} \beta - r(\gamma) &= \beta - (\gamma-\alpha_j) \\ &= r(\beta)+\alpha_i - (r(\beta)+\alpha_k)+\alpha_j \\ & = \alpha_i -\alpha_k + \alpha_j \end{align*}
We conclude that $\beta-r(\gamma)$ is not a root and hence $\matr_i$ is a triangular matrix.  
\end{proof}

 \subsection{Types $G_2$ and $F_4$} 
In type $G_2$ each matrix $\mat_i$ is $1 \times 1$.  The one entry in the matrix is nonzero by Proposition \ref{proposition: structure constant nonzero}.  

In type $F_4$ we only show $\mat_i$ for $i \leq 5$.
This is because the matrix $\mat_i$ has only two nonzero columns if $i>5$;  by Lemma \ref{2x2}, those $\mat_i$ have full rank. 

{\tiny
\begin{tabular}{|l|l|l|} \hline
 \multicolumn{3}{|c|}{Type $G_2$}    \\ \hline 
               &  I     &  II        \\ \hline
$\Phi_1$       & $10$      & $01$      \\
$\Phi_2$       & $11$      &           \\
$\Phi_3$       & $21$      &           \\ 
$\Phi_4$       & $31$      &           \\
$\Phi_5$       & $32$      &           \\ \hline
\end{tabular}
}

{\tiny
\begin{tabular}{|l|l|l|l|l|} \hline
 \multicolumn{5}{|c|}{Type $F_4$}    \\ \hline 
             &  1       &  3    &  2   &  4      \\ \hline
 $\Phi_1$      & $1000$      & $0010$   & $0100$  & $0001$     \\
 $\Phi_2$      & $1100$      & $0011$   & $0110$  &            \\
 $\Phi_3$      & $1110$      & $0111$   & $0120$  &            \\ 
 $\Phi_4$      & $1120$      & $1111$   & $0121$  &            \\
 $\Phi_5$      & $1220$      & $1121$   & $0122$  &            \\ 
 $\Phi_6$      & $1221$      & $1122$   &         &            \\ 
 $\Phi_7$      & $1231$      & $1222$   &         &            \\ 
 $\Phi_8$      & $1232$      &          &         &            \\
 $\Phi_9$      & $1242$      &          &         &            \\
 $\Phi_{10}$   & $1342$      &          &         &            \\
 $\Phi_{11}$   & $2342$      &          &         &            \\  \hline
\end{tabular}

\begin{minipage}[b]{2.5 in}
\begin{tabular}{|c | c | c | c |c | c| }
 \hline 
 \multicolumn{5}{|l|}{Matrix at height 1} \\ \hline \hline
    & I & II  & III & IV \\ \hline   
  I       & 1   & -1   & 0   & 0 \\  \hline
  II      & 0   & 1   & -1  & 0	 \\  \hline
  III     & 0   & 0   & 1  & -1  \\  \hline
\end{tabular}

Det of (I,II, III) has one nonzero term.
\end{minipage}
\begin{minipage}[b]{2.5 in}
\begin{tabular}{|c | c | c | c |c | }
 \hline 
 \multicolumn{4}{|l|}{Matrix at height 2} \\ \hline \hline
    & I & II  & III  \\ \hline   
  I     & 1   & -1   & 0    \\  \hline
  II      & 0   & -2   & 0 	 \\  \hline
  III     & 0   & 1   & -1	   \\  \hline
\end{tabular}

Det has one nonzero term.
\end{minipage}

\begin{minipage}[b]{2.5 in}
\begin{tabular}{|c | c | c | c |c | }
 \hline 
 \multicolumn{4}{|l|}{Matrix at height 3} \\ \hline \hline
    & I & II  & III  \\ \hline   
  I       & -2   & -1   & 0    \\  \hline
  II      & 0   & 1   & -1 	 \\  \hline
  III     & 1   & 0   & -1	   \\  \hline
\end{tabular}

Det = $3$
\end{minipage}
\begin{minipage}[b]{2.5 in}
\begin{tabular}{|c | c | c | c |c | }
 \hline 
 \multicolumn{4}{|l|}{Matrix at height 4} \\ \hline \hline
    & I & II  & III  \\ \hline   
  I       & -1   & 0   & 0    \\  \hline
  II      & 0   & -2   & 0 	 \\  \hline
  III     & 1   & -1   & -1	   \\  \hline
\end{tabular}

Det has one nonzero term.
\end{minipage}

\begin{minipage}[b]{2.5 in}
\begin{tabular}{|c | c | c | c |c | }
 \hline 
 \multicolumn{4}{|l|}{Matrix at height 5} \\ \hline \hline
    & I & II  & III  \\ \hline   
  I       & -1   & 0   & -1    \\  \hline
  III     & 0   & -1   & -2	   \\  \hline
\end{tabular}

Det of (I,II)  has one nonzero term.
\end{minipage}
}

\newpage

\subsection{The root system $E_6$}  This describes the roots and structure constants for $E_6$.

 {\tiny
 \begin{tabular}{|l|l|l|l|l|l|l|l|} \hline
  \multicolumn{7}{|c|}{Type $E_6$}      \\ \hline 
                &  VI      &  III     &   I     &  V      &  IV     &   II    \\ \hline
 $\Phi_1$       & $000001$   & $001000$   & $100000$  & $000010$  & $000100$  &  $010000$\\
 $\Phi_2$       & $000011$   & $001100$   & $101000$  & $000110$  & $010100$  &            \\
 $\Phi_3$       & $000111$   & $011100$   & $101100$  & $001110$  & $010110$  &            \\
 $\Phi_4$       & $001111$   & $011110$   & $111100$  & $101110$  & $010111$  &            \\
 $\Phi_5$       & $011111$   & $011210$   & $111110$  & $101111$  &            &            \\
 $\Phi_6$       & $111111$   & $011211$   & $111210$  &            &            &            \\
 $\Phi_7$       & $111211$   & $011221$   & $112210$  &            &            &            \\
 $\Phi_8$       & $111221$   &             & $112211$  &            &            &            \\
 $\Phi_9$       & $112221$   &             &            &            &            &            \\
 $\Phi_{10}$    & $112321$   &             &            &            &            &            \\
 $\Phi_{11}$    & $122321$   &             &            &            &            &            \\ \hline
 \end{tabular}

\begin{minipage}[b]{3.5in}
\begin{tabular}{|c | c | c | c |c | c | c   |}
 \hline 
 \multicolumn{7}{|l|}{Matrix at height 1} \\ \hline \hline
  &  I & II & III & IV  & V  & VI      \\ \hline   
  I     & 1  & 0  & -1   & 0   & 0  & 0       \\  \hline
  III   & 0  & 0  & 1   & -1   & 0  & 0       \\  \hline
  IV    & 0  & 1  & 0   & -1   & 0  & 0   	  \\  \hline
  V     & 0  & 0  & 0   & 1   & -1  & 0    	  \\  \hline
  VI    & 0  & 0  & 0   & 0   & 1  & -1    	  \\  \hline
\end{tabular}

Det(I,III, IV, V, VI) has one nonzero term.
\end{minipage}
\begin{minipage}[b]{3.5in}
\begin{tabular}{|c | c | c | c |c | c | }
 \hline 
 \multicolumn{6}{|l|}{Matrix at height 2} \\ \hline \hline
  &  I  & III & IV  & V  & VI      \\ \hline   
  I     & 1   & -1   & 0   & 0  & 0      \\  \hline
  III   & 0   & -1   & -1   & 0  & 0       \\  \hline
  IV    & 0   & 0   & 1   & -1  & 0  	  \\  \hline
  V     & 0   & 1   & 0   & -1  & 0   	  \\  \hline
  VI    & 0   & 0   & 0   & 1  & -1    	  \\  \hline
\end{tabular}

Det =$-2$
\end{minipage}

\begin{minipage}[b]{3.5in}
\begin{tabular}{|c | c | c | c |c | c |  }
 \hline 
 \multicolumn{6}{|l|}{Matrix at height 3} \\ \hline \hline
  &  I  & III & IV  & V  & VI      \\ \hline   
  I     & -1   & -1   & 0   & 0  & 0       \\  \hline
  III   & 0   & 1   & -1   & -1  & 0       \\  \hline
  IV    & 0   & 0   & 1   & 0  & -1     \\  \hline
  V     & 1   & 0   & 0   & -1  & 0    	  \\  \hline
  VI    & 0   & 0   & 0   & 1  & -1    	  \\  \hline
\end{tabular}

Det=3
\end{minipage}
\begin{minipage}[b]{3.5in}
\begin{tabular}{|c | c | c | c |c | c |   }
 \hline 
 \multicolumn{6}{|l|}{Matrix at height 4} \\ \hline \hline
  &  I  & III & IV  & V  & VI     \\ \hline   
  I     & 1   & -1   & 0   & -1  & 0       \\  \hline
  III   & 0   & -1   & 0   & 0  & 0      \\  \hline
  V     & 0   & 0   & 0   &1  & -1   	  \\  \hline
  VI    & 0   & 1  & -1   & 0  & -1    	  \\  \hline
\end{tabular}

Det(I,III,V,VI) has one nonzero term.
\end{minipage}

\begin{minipage}[b]{3.5in}
\begin{tabular}{|c | c | c |c | c |  }
 \hline 
 \multicolumn{5}{|l|}{Matrix at height 5} \\ \hline \hline
  &  I  & III   & V  & VI     \\ \hline   
  I     & -1   & -1     & 0  & 0       \\  \hline
  III   & 0   & 1     & 0  & -1       \\  \hline
  VI    & 1   & 0     & -1  & -1    	  \\  \hline
\end{tabular}

Det(I,III,V) has one nonzero term.
\end{minipage}
\begin{minipage}[b]{3.5in}
\begin{tabular}{|c | c | c |c |  }
 \hline 
 \multicolumn{4}{|l|}{Matrix at height 6} \\ \hline \hline
  &  I  & III     & VI      \\ \hline   
  I     & -1   & 0       & 0     \\  \hline
  III   & 0   & -1       & 0      \\  \hline
  VI    & 1   & -1        & -1    	  \\  \hline
\end{tabular}

Det has one nonzero term.
\end{minipage}

\begin{minipage}[b]{3.5in}
\begin{tabular}{|c | c | c | c |  }
 \hline 
 \multicolumn{4}{|l|}{Matrix at height 7} \\ \hline \hline
  &  I  & III     & VI     \\ \hline   
  I     &  1  & 0       & 0       \\  \hline
  VI    &  0  & -1      & -1    	  \\  \hline
\end{tabular}

Det(I,VI) has one nonzero term.
\end{minipage}
}

\newpage
\subsection{The root system $E_7$} This describes the roots and structure constants for $E_7$.

{\tiny
 \begin{tabular}{|l|l|l|l|l|l|l|l|l|} \hline
  \multicolumn{8}{|c|}{Type $E_7$}      \\ \hline 
  Height &  VII    &  VI      &  III     &   I     &  V      &  IV     &   II     \\ \hline
 1       & $0000001$  & $0000010$   & $0010000$   & $1000000$  & $0000100$  & $0001000$  &  $0100000$\\
 2       & $0000011$  & $0000110$   & $0011000$   & $1010000$  & $0001100$  & $0101000$  &            \\
 3       & $0000111$  & $0001110$   & $0111000$   & $1011000$  & $0011100$  & $0101100$  &            \\
 4       & $0001111$  & $0011110$   & $0111100$   & $1111000$  & $1011100$  & $0101110$  &            \\
 5       & $0011111$  & $0111110$   & $0112100$   & $1111100$  & $1011110$  & $0101111$  &            \\
 6       & $0111111$  & $1111110$   & $0112110$   & $1112100$  & $1011111$  &            &            \\
 7       & $0112111$  & $1112110$   & $0112210$   & $1122100$  & $1111111$  &            &            \\
 8       & $1112111$  & $1112210$   & $0112211$   & $1122110$  &            &            &            \\
 9       & $1112211$  & $1122210$   & $0112221$   & $1122111$  &            &            &            \\
 10      & $1122211$  & $1123210$   & $1112221$   &            &            &            &            \\
 11      & $1123211$  & $1223210$   & $1122221$   &            &            &            &            \\
 12      & $1123221$  & $1223211$   &             &            &            &            &            \\
 13      & $1123321$  & $1223221$   &             &            &            &            &            \\
 14      & $1223321$  &             &             &            &            &            &            \\
 15      & $1224321$  &             &             &            &            &            &            \\
 16      & $1234321$  &             &             &            &            &            &            \\
 17      & $2234321$  &             &             &            &            &            &            \\ \hline

 \end{tabular}

\begin{minipage}[b]{3.5 in}
\begin{tabular}{|c | c | c | c |c | c | c | c | c |}
 \hline 
 \multicolumn{8}{|l|}{Matrix at height 1} \\ \hline \hline
  &  I & II & III & IV  & V  & VI  & VII    \\ \hline   
  I     & 1  & 0  & -1   & 0   & 0  & 0   & 0    \\  \hline
  III   & 0  & 0  & 1   & -1   & 0  & 0   & 0    \\  \hline
  IV    & 0  & 1  & 0   & -1   & 0  & 0   &	0	  \\  \hline
  V     & 0  & 0  & 0   & 1   & -1  & 0   & 0 	  \\  \hline
  VI    & 0  & 0  & 0   & 0   & 1  & -1   & 0 	  \\  \hline
  VII   & 0  & 0  & 0   & 0   & 0  & 1   & -1	  \\  \hline
\end{tabular}

Det(I,II,III,V,VI,VII) has one nonzero term.
\end{minipage}
\begin{minipage}[b]{3.5in}
\begin{tabular}{|c | c | c | c |c | c | c |  }
 \hline 
 \multicolumn{7}{|l|}{Matrix at height 2} \\ \hline \hline
  &  I  & III & IV  & V  & VI  & VII    \\ \hline   
  I     & 1   & -1   & 0   & 0  & 0   & 0  \\  \hline
  III   & 0   & -1   & -1   & 0  & 0   & 0    \\  \hline
  IV    & 0   & 0   & 1   & -1  & 0   &	0	  \\  \hline
  V     & 0   & 1   & 0   & -1  & 0   & 0  \\  \hline
  VI    & 0   & 0   & 0   & 1  & -1   & 0 	 \\  \hline
  VII   & 0   & 0   & 0   & 0  & 1   & -1	  \\  \hline
\end{tabular}

Det=2
\end{minipage}

\begin{minipage}[b]{3.5in}
\begin{tabular}{|c | c | c | c |c | c | c |  }
 \hline 
 \multicolumn{7}{|l|}{Matrix at height 3} \\ \hline \hline
  &  I  & III & IV  & V  & VI  & VII    \\ \hline   
  I     & -1   & -1   & 0   & 0  & 0   & 0   \\  \hline
  III   & 0   & 1   & -1   & -1  & 0   & 0  \\  \hline
  IV    & 0   & 0   & 1   & 0  & -1   &	0	  \\  \hline
  V     & 1   & 0   & 0   & -1  & 0   & 0  \\  \hline
  VI    & 0   & 0   & 0   & 1  & -1   & 0 	 \\  \hline
  VII   & 0   & 0   & 0   & 0  & 1   & -1	  \\  \hline
  
\end{tabular}

Det=3
\end{minipage}
\begin{minipage}[b]{3.5in}
\begin{tabular}{|c | c | c | c |c | c | c | c | }
 \hline 
 \multicolumn{7}{|l|}{Matrix at height 4} \\ \hline \hline
  &  I  & III & IV  & V  & VI  & VII    \\ \hline   
  I     & 1   & -1   & 0   & -1  & 0   & 0   \\  \hline
  III   & 0   & -1   & 0   & 0  & 0   & 0    \\  \hline
  IV    & 0   & 0   & 1   & 0  & 0   & -1	 \\  \hline
  V     & 0   & 0   & 0   & 1  & -1   & 0 	   \\  \hline
  VI    & 0   & 1  & -1   & 0  & -1   & 0 	 \\  \hline
  VII   & 0   & 0   & 0   & 0  & 1   & -1	  \\  \hline
\end{tabular}

Det=-2
\end{minipage}

\begin{minipage}[b]{3.5in}
\begin{tabular}{|c | c | c | c |c | c | c | }
 \hline 
 \multicolumn{7}{|l|}{Matrix at height 5} \\ \hline \hline
  &  I  & III & IV  & V  & VI  & VII    \\ \hline   
  I     & -1   & -1   & 0   & 0  & 0   & 0    \\  \hline
  III   & 0   & 1   & 0   & 0  & -1   & 0    \\  \hline
  V     & 0   & 0   & 0   & 1  & 0   & -1 	 \\  \hline
  VI    & 1   & 0   & 0   & -1  & -1   & 0 	 \\  \hline
  VII   & 0   & 0   & -1   & 0  & 1   & -1	 \\  \hline
\end{tabular}

Det(I,III,V,VI,VII)=-3
\end{minipage}
\begin{minipage}[b]{3.5in}
\begin{tabular}{|c | c | c | c |c | c |  }
 \hline 
 \multicolumn{6}{|l|}{Matrix at height 6} \\ \hline \hline
  &  I  & III   & V  & VI  & VII    \\ \hline   
  I     & -1   & 0      & 0  & 0   & 0    \\  \hline
  V     & 0   & 0      & -1  & 1   & -1 	  \\  \hline
  VI    & 1   & -1     & 0  & -1   & 0 	  \\  \hline
  VII   & 0   & 1     & 0  & 0   & -1	  \\  \hline
\end{tabular}

Det has one nonzero term.
\end{minipage}

\begin{minipage}[b]{3.5in}
\begin{tabular}{|c | c | c | c |c | c | }
 \hline 
 \multicolumn{6}{|l|}{Matrix at height 7} \\ \hline \hline
  &  I  & III   & V  & VI  & VII    \\ \hline   
  I     &  1  & 0      & 0  & -1   & 0    \\  \hline
  III   &  0  & 1      & 0  & 0   & -1    \\  \hline
  VI    &  0  & -1      & 0  & 1   & 0 	  \\  \hline
  VII   &  0  & 0      & -1  & 1   & -1	  \\  \hline
\end{tabular}

Det(I,III,V,VI) has one nonzero term.
\end{minipage}
\begin{minipage}[b]{3.5in}
\begin{tabular}{|c | c | c | c |c |   }
 \hline 
 \multicolumn{5}{|l|}{Matrix at height 8} \\ \hline \hline
  &  I  & III     & VI  & VII    \\ \hline   
  I     &  1  & 0       & 0   & -1    \\  \hline
  III   &  0  & -1       & 0   & 0    \\  \hline
  VI    &  -1  & 0       & -1   & 0 	  \\  \hline
  VII   &  0  & -1        & 1   & -1	  \\  \hline
\end{tabular}

Det has one nonzero term.
\end{minipage}

\begin{minipage}[b]{2in}
\begin{tabular}{|c | c | c | c |c |  }
 \hline 
 \multicolumn{5}{|l|}{Matrix at height 9} \\ \hline \hline
  &  I  & III     & VI  & VII    \\ \hline   
  III   &  0  & -1        & 0   & -1    \\  \hline
  VI    &  0  & 0        & -1   & 0 	  \\  \hline
  VII   &  -1  & 0        & 1   & -1	  \\  \hline
\end{tabular}

Det(III,VI,VII) has one term.
\end{minipage}
\begin{minipage}[b]{2in}
\begin{tabular}{|c | c | c | c |  }
 \hline 
 \multicolumn{4}{|l|}{Matrix at height 10} \\ \hline \hline
    & III     & VI  & VII    \\ \hline   
  III     & -1        & 0   & 0    \\  \hline
  VI      & 0        & -1   & 0 	  \\  \hline
  VII     & 0        & 1   & -1	  \\  \hline
\end{tabular}

Det has one nonzero term.
\end{minipage}
\begin{minipage}[b]{2in}
\begin{tabular}{|c | c | c | c |  }
 \hline 
 \multicolumn{4}{|l|}{Matrix at height 11} \\ \hline \hline
    & III    & VI  & VII    \\ \hline   
  III     & 1       & 0   & 0    \\  \hline
  VI      & 0        & 1   & -1 	 \\  \hline
  VII     & -1        & 0   & -1	  \\  \hline
\end{tabular}

Det has one nonzero term.
\end{minipage}
}

\newpage
\subsection{The root system $E_8$} This describes the roots and structure constants for $E_8$.

{\tiny
 \begin{tabular}{|l|l|l|l|l|l|l|l|l|} \hline
  \multicolumn{9}{|c|}{Type $E_8$}      \\ \hline 
  Height &  VII       &  VI    &  III   &  IV   &   I   &  VIII   &  V &  II   \\ \hline
 1  & $00000010$ &  $00000100$ & $00100000$ & $00010000$ & $10000000$  & $00000001$  & $00001000$  &  $01000000$\\
 2  & $00000110$ & $00001100$ & $00110000$  & $01010000$ & $10100000$  & $00000011$  & $00011000$  & \\
 3  & $00001110$ & $00011100$  & $01110000$  & $01011000$ & $10110000$  & $00000111$ & $00111000$  & \\
 4  & $00011110$ & $00111100$  & $01111000$  & $01011100$ & $11110000$  & $00001111$ & $10111000$  & \\
 5  & $00111110$ & $01111100$  & $01121000$  & $01011110$ & $11111000$  & $00011111$ & $10111100$  & \\
 6  & $01111110$ & $11111100$ & $01121100$  & $01011111$  & $11121000$  & $00111111$ & $10111110$  & \\
 7  & $01121110$ & $11121100$ & $01122100$  & $01111111$  & $11221000$  & $10111111$ & $11111110$  & \\
 8  & $11121110$  & $11122100$ & $01122110$ & $01121111$  & $11221100$  & $11111111$ &  & \\
 9  & $11122110$  & $11222100$ & $01122210$ & $01122111$  & $11221110$  & $11121111$ &  & \\
 10 & $11222110$  & $11232100$ & $11122210$ & $01122211$  & $11221111$  & $11122111$ &  & \\
 11 & $11232110$  &  $12232100$ & $11222210$ & $01122221$ & $11222111$  & $11122211$  &  & \\
 12 & $11232210$  & $12232110$ &  $11222211$ & $11122221$ & $11232111$ &  &  & \\
 13 & $11233210$  & $12232210$  & $11232211$ & $11222221$ & $12232111$  &  &  & \\
 14 & $12233210$  & $12232211$  & $11233211$ & $11232221$ &  &  &  & \\
 15 & $12243210$  & $12233211$ &  $11233221$ & $12232221$ &  &  &  & \\
 16 & $12343210$  & $12243211$ &  $11233321$ & $12233221$ &  &  &  & \\
 17 & $22343210$  & $12343211$  & $12233321$ & $12243221$ &  &  &  & \\
 18 & $22343211$  & $12343221$ &  $12243321$ &  &  &  &  & \\
 19 & $22343221$  & $12343321$ &  $12244321$ &  &  &  &  & \\
 20 & $22343321$  & $12344321$ &             &  &  &  &  & \\
 21 & $22344321$  & $12354321$ &             &  &  &  &  & \\
 22 & $22354321$  & $13354321$ &             &  &  &  &  & \\
 23 & $22454321$  & $23354321$ &             &  &  &  &  & \\
 24 & $23454321$  &  &  &  &  &  &  & \\
 25 & $23464321$  &  &  &  &  &  &  & \\
 26 & $23465321$  &  &  &  &  &  &  & \\
 27 & $23465421$  &  &  &  &  &  &  & \\
 28 & $23465431$  &  &  &  &  &  &  & \\
 29 & $23465432$  &  &  &  &  &  &  & \\ \hline
 \end{tabular}

\begin{minipage}[b]{3.5in}
\begin{tabular}{|c | c | c | c |c | c | c | c | c |}
 \hline 
 \multicolumn{9}{|l|}{Matrix at height 1} \\ \hline \hline
  &  I & II & III & IV  & V  & VI  & VII & VIII   \\ \hline   
  I     & 1  & 0  & -1   & 0   & 0  & 0   & 0   & 0 \\  \hline
  III   & 0  & 0  & 1   & -1   & 0  & 0   & 0   & 0 \\  \hline
  IV    & 0  & 1  & 0   & -1   & 0  & 0   &	0	 & 0 \\  \hline
  V     & 0  & 0  & 0   & 1   & -1  & 0   & 0 	 & 0 \\  \hline
  VI    & 0  & 0  & 0   & 0   & 1  & -1   & 0 	 & 0 \\  \hline
  VII   & 0  & 0  & 0   & 0   & 0  & 1   & -1	 & 0 \\  \hline
  VIII  & 0  & 0  & 0   & 0   & 0  & 0   & 1	 & -1  \\  \hline
\end{tabular}

Det(I,II,III,IV,V,VI,VII) has one nonzero term.
\end{minipage}
\begin{minipage}[b]{3.5in}
\begin{tabular}{|c | c | c | c |c | c | c | c | }
 \hline 
 \multicolumn{8}{|l|}{Matrix at height 2} \\ \hline \hline
  &  I  & III & IV  & V  & VI  & VII & VIII   \\ \hline   
  I     & 1   & -1   & 0   & 0  & 0   & 0   & 0 \\  \hline
  III   & 0   & -1   & -1   & 0  & 0   & 0   & 0 \\  \hline
  IV    & 0   & 0   & 1   & -1  & 0   &	0	 & 0 \\  \hline
  V     & 0   & 1   & 0   & -1  & 0   & 0 	 & 0 \\  \hline
  VI    & 0   & 0   & 0   & 1  & -1   & 0 	 & 0 \\  \hline
  VII   & 0   & 0   & 0   & 0  & 1   & -1	 & 0 \\  \hline
  VIII  & 0   & 0   & 0   & 0  & 0   & 1	 & -1  \\  \hline
\end{tabular}

Det=2
\end{minipage}

\begin{minipage}[b]{3.5in}
\begin{tabular}{|c | c | c | c |c | c | c | c | }
 \hline 
 \multicolumn{8}{|l|}{Matrix at height 3} \\ \hline \hline
  &  I  & III & IV  & V  & VI  & VII & VIII   \\ \hline   
  I     & -1   & -1   & 0   & 0  & 0   & 0   & 0 \\  \hline
  III   & 0   & 1   & -1   & -1  & 0   & 0   & 0 \\  \hline
  IV    & 0   & 0   & 1   & 0  & -1   &	0	 & 0 \\  \hline
  V     & 1   & 0   & 0   & -1  & 0   & 0 	 & 0 \\  \hline
  VI    & 0   & 0   & 0   & 1  & -1   & 0 	 & 0 \\  \hline
  VII   & 0   & 0   & 0   & 0  & 1   & -1	 & 0 \\  \hline
  VIII  & 0   & 0   & 0   & 0  & 0   & 1	 & -1  \\  \hline
\end{tabular}

Det=3
\end{minipage}
\begin{minipage}[b]{3.5in}
\begin{tabular}{|c | c | c | c |c | c | c | c | }
 \hline 
 \multicolumn{8}{|l|}{Matrix at height 4} \\ \hline \hline
  &  I  & III & IV  & V  & VI  & VII & VIII   \\ \hline   
  I     & 1   & -1   & 0   & -1  & 0   & 0   & 0 \\  \hline
  III   & 0   & -1   & 0   & 0  & 0   & 0   & 0 \\  \hline
  IV    & 0   & 0   & 1   & 0  & 0   & -1	 & 0 \\  \hline
  V     & 0   & 0   & 0   & 1  & -1   & 0 	 & 0 \\  \hline
  VI    & 0   & 1  & -1   & 0  & -1   & 0 	 & 0 \\  \hline
  VII   & 0   & 0   & 0   & 0  & 1   & -1	 & 0 \\  \hline
  VIII  & 0   & 0   & 0   & 0  & 0   & 1	 & -1  \\  \hline
\end{tabular}

Det=-2
\end{minipage}

\begin{minipage}[b]{3.5in}
\begin{tabular}{|c | c | c | c |c | c | c | c | }
 \hline 
 \multicolumn{8}{|l|}{Matrix at height 5} \\ \hline \hline
  &  I  & III & IV  & V  & VI  & VII & VIII   \\ \hline   
  I     & -1   & -1   & 0   & 0  & 0   & 0   & 0 \\  \hline
  III   & 0   & 1   & 0   & 0  & -1   & 0   & 0 \\  \hline
  IV    & 0   & 0   & 1   & 0  & 0   & 0	 & -1 \\  \hline
  V     & 0   & 0   & 0   & 1  & 0   & -1 	 & 0 \\  \hline
  VI    & 1   & 0   & 0   & -1  & -1   & 0 	 & 0 \\  \hline
  VII   & 0   & 0   & -1   & 0  & 1   & -1	 & 0 \\  \hline
  VIII  & 0   & 0   & 0   & 0  & 0   & 1	 & -1  \\  \hline
\end{tabular}

Det=-5
\end{minipage}
\begin{minipage}[b]{3.5in}
\begin{tabular}{|c | c | c | c |c | c | c | c | }
 \hline 
 \multicolumn{8}{|l|}{Matrix at height 6} \\ \hline \hline
  &  I  & III & IV  & V  & VI  & VII & VIII   \\ \hline   
  I     & -1   & 0   & 0   & 0  & 0   & 0   & 0 \\  \hline
  III   & 0   & -1   & 0   & 0  & 0   & 0   & 0 \\  \hline
  IV    & 0   & 0   & -1   & 0  & 0   & 1	 & -1 \\  \hline
  V     & 0   & 0   & 0   & -1  & 1   & -1 	 & 0 \\  \hline
  VI    & 1   & -1   & 0   & 0  & -1   & 0 	 & 0 \\  \hline
  VII   & 0   & 1   & 0   & 0  & 0   & -1	 & 0 \\  \hline
  VIII  & 0   & 0   & 0   & 1  & 0   & 0	 & -1  \\  \hline
\end{tabular}

Det has one nonzero term.
\end{minipage}

\begin{minipage}[b]{3.5in}
\begin{tabular}{|c | c | c | c |c | c | c | c | }
 \hline 
 \multicolumn{8}{|l|}{Matrix at height 7} \\ \hline \hline
  &  I  & III & IV  & V  & VI  & VII & VIII   \\ \hline   
  I     &  1  & 0   & 0   & 0  & -1   & 0   & 0 \\  \hline
  III   &  0  & 1   & 0   & 0  & 0   & -1   & 0 \\  \hline
  IV    &  0  & 0   & -1   & 0  & 0   & 1	 & 0 \\  \hline
  VI    &  0  & -1   & 0   & 0  & 1   & 0 	 & 0 \\  \hline
  VII   &  0  & 0   & 0   & -1  & 1   & -1	 & 0 \\  \hline
  VIII  &  0  & 0   & -1   & 1  & 0   & 0	 & -1  \\  \hline
\end{tabular}

Det(I,III,IV,V,VI,VIII) has one nonzero term.
\end{minipage}
\begin{minipage}[b]{3.5in}
\begin{tabular}{|c | c | c | c |c | c | c |  }
 \hline 
 \multicolumn{7}{|l|}{Matrix at height 8} \\ \hline \hline
  &  I  & III & IV    & VI  & VII & VIII   \\ \hline   
  I     &  1  & 0   & 0     & 0   & -1   & 0 \\  \hline
  III   &  0  & -1   & 0     & 0   & 0   & 0 \\  \hline
  IV    &  0  & 1   & -1     & 0   & 0	 & 0 \\  \hline
  VI    &  -1  & 0   & 0     & -1   & 0 	 & 0 \\  \hline
  VII   &  0  & -1   & 0     & 1   & -1	 & 0 \\  \hline
  VIII  &  0  & 0   & -1     & 0   & 1	 & -1  \\  \hline
\end{tabular}

Det = -2
\end{minipage}

\begin{minipage}[b]{3.5in}
\begin{tabular}{|c | c | c | c |c | c | c |  }
 \hline 
 \multicolumn{7}{|l|}{Matrix at height 9} \\ \hline \hline
  &  I  & III & IV    & VI  & VII & VIII   \\ \hline   
  I     &  1  & 0   & 0     & 0   & 0   & -1 \\  \hline
  III   &  0  & -1   & 0     & 0   & -1   & 0 \\  \hline
  IV    &  0  & 1   & -1     & 0   & 0	 & 0 \\  \hline
  VI    &  0  & 0   & 0     & -1   & 0 	 & 0 \\  \hline
  VII   &  -1  & 0   & 0     & 1   & -1	 & 0 \\  \hline
  VIII  &  0  & 0   & -1     & 0   & 1	 & -1  \\  \hline
\end{tabular}

Det = -3
\end{minipage}
\begin{minipage}[b]{3.5in}
\begin{tabular}{|c | c | c | c |c | c | c |  }
 \hline 
 \multicolumn{7}{|l|}{Matrix at height 10} \\ \hline \hline
  &  I  & III & IV    & VI  & VII & VIII   \\ \hline   
  I     &  -1  & 0   & 0     & 0   & 1   & -1 \\  \hline
  III   &  0  & -1   & 0     & 0   & 0   & 0 \\  \hline
  IV    &  0  & 0   & -1     & 0   & 0	 & 0 \\  \hline
  VI    &  0  & 0   & 0     & -1   & 0 	 & 0 \\  \hline
  VII   &  0  & 0   & 0     & 1   & -1	 & 0 \\  \hline
  VIII  &  0  & 1   & -1     & 0   & 0	 & -1  \\  \hline
\end{tabular}

Det has one nonzero term.
\end{minipage}

\begin{minipage}[b]{3.5in}
\begin{tabular}{|c | c | c | c |c | c | c |  }
 \hline 
 \multicolumn{7}{|l|}{Matrix at height 11} \\ \hline \hline
  &  I  & III & IV    & VI  & VII & VIII   \\ \hline   
  I     &  -1  & 0   & 0     & 0   & 1   & 0 \\  \hline
  III   &  -1  & 1   & 0     & 0   & 0   & -1 \\  \hline
  IV    &  0  & 0   & -1     & 0   & 0	  & -1 \\  \hline
  VI    &  0  & 0   & 0     & 1   & -1 	& 0 \\  \hline
  VII   &  0  & -1   & 0     & 0   & -1	 & 0 \\  \hline
\end{tabular}

Det(I,III,IV,VI,VII) = -2
\end{minipage}
\begin{minipage}[b]{3.5in}
\begin{tabular}{|c | c | c | c |c | c |  }
 \hline 
 \multicolumn{6}{|l|}{Matrix at height 12} \\ \hline \hline
  &  I  & III & IV    & VI  & VII  \\ \hline   
  I     &  -1  & 0   & 0     & 1   & 0    \\  \hline
  III   &  -1  & -1   & 0     & 0   &1    \\  \hline
  IV    &  0  & -1   & -1     & 0   & 0	   \\  \hline
  VI    &  0  & 0   & 0     & -1   & -1 	 \\  \hline
  VII   &  0  & 0   & 0     & 0   & -1	   \\  \hline
\end{tabular}

Det has one nonzero term.
\end{minipage}

\begin{minipage}[b]{3.5in}
\begin{tabular}{|c | c | c | c |c | c | }
 \hline 
 \multicolumn{6}{|l|}{Matrix at height 13} \\ \hline \hline
  &  I  & III & IV    & VI  & VII  \\ \hline   
  III   &  0  & -1   & 0     & 0   & 1    \\  \hline
  IV    &  0  & -1   & -1     & 0   & 0	   \\  \hline
  VI    &  -1  & -1   & 0     & 1   & 0 	 \\  \hline
  VII   &  0  & 0   & 0     & -1   & -1	   \\  \hline
\end{tabular}

Det(I,III,IV,VI) has one nonzero term.
\end{minipage}
\begin{minipage}[b]{3.5in}
\begin{tabular}{|c | c | c | c |c | }
 \hline 
 \multicolumn{5}{|l|}{Matrix at height 14} \\ \hline \hline
    & III & IV    & VI  & VII  \\ \hline   
  III     & -1   & -1     & 0   & 0    \\  \hline
  IV      & 0   & -1     & -1   & 0	   \\  \hline
  VI      & -1   & 0     & -1   & 1 	 \\  \hline
  VII     & 0   & 0     & 0   & -1	   \\  \hline
\end{tabular}

Det = 2
\end{minipage}

\begin{minipage}[b]{3.5in}
\begin{tabular}{|c | c | c | c |c | c | }
 \hline 
 \multicolumn{5}{|l|}{Matrix at height 15} \\ \hline \hline
    & III & IV    & VI  & VII  \\ \hline   
  III     & -1   & 0     & 0   & 0    \\  \hline
  IV      & -1   & -1     & -1   & 0	   \\  \hline
  VI      & 0   & 0     & -1   & -1 	 \\  \hline
  VII     & 0   & 0     & 0   & -1	   \\  \hline
\end{tabular}

Det has one nonzero term.
\end{minipage}
\begin{minipage}[b]{3.5in}
\begin{tabular}{|c | c | c | c |c | c | }
 \hline 
 \multicolumn{5}{|l|}{Matrix at height 16} \\ \hline \hline
    & III & IV    & VI  & VII  \\ \hline   
  III     & -1   & -1     & 0   & 0    \\  \hline
  IV      & 0   & -1     & -1   & 0	   \\  \hline
  VI      & 0   & 0     & -1   & -1 	 \\  \hline
  VII     & 0   & 0     & 0   & -1	   \\  \hline
\end{tabular}

Det has one nonzero term.
\end{minipage}

\begin{minipage}[b]{3.5in}
\begin{tabular}{|c | c | c | c |c | c | }
 \hline 
 \multicolumn{5}{|l|}{Matrix at height 17} \\ \hline \hline
    & III & IV    & VI  & VII  \\ \hline   
  III     & -1   & -1     & 0   & 0    \\  \hline
  VI      & 0   & -1     & -1   & 0 	 \\  \hline
  VII     & 0   & 0     & -1   & -1	   \\  \hline
\end{tabular}

Det(III,IV,VI) has one nonzero term.
\end{minipage}
\begin{minipage}[b]{3.5in}
\begin{tabular}{|c | c | c | c |c | }
 \hline 
 \multicolumn{4}{|l|}{Matrix at height 18} \\ \hline \hline
    & III & VI  & VII  \\ \hline   
  III     & -1   & 0   & 0    \\  \hline
  VI      & -1   & -1   & 0 	 \\  \hline
  VII     & 0   & -1   & -1	   \\  \hline
\end{tabular}

Det has one nonzero term.
\end{minipage}

\begin{minipage}[b]{3.5in}
\begin{tabular}{|c | c | c | c |c | }
 \hline 
 \multicolumn{4}{|l|}{Matrix at height 19} \\ \hline \hline
    & III & VI  & VII  \\ \hline   
  VI      & -1   & -1   & 0 	 \\  \hline
  VII     & 0   & -1   & -1	   \\  \hline
\end{tabular}

Det(III,VI) has one nonzero term.
\end{minipage}
}

We omit tables for heights over $19$ since there are only two columns in those cases, so the matrix $\matr_i$ has full rank by Lemma \ref{2x2}.

\end{document}